\newtheorem{proposition}{{\bf Proposition}}%[section]
\newtheorem{lemma}{{\bf Lemma}}%[section]
\newtheorem{corollary}{{\bf Corollary}}%[section]
\newtheorem{assumption}{{\bf Assumptions}}%[section]
\newtheorem{theorem}{{\bf Theorem}}% [section]
\theoremstyle{remark}
\newtheorem{example}{{\bf Example}}%[section]
\newtheorem{definition}{{\bf Definition}}%[section]
\newtheorem{remark}{{\bf Remark}}%[section]
\def\N{\mathbb N}
\def\R{\mathbb R}
\def\Z{\mathbb Z}
\def\E{\mathbb E}
\def\B{\mathcal B}
\def\F{\mathcal F}
\def\C{\mathbb C}
\def\DS{\mathbb D}
\def\x{\boldsymbol{x}}
\def\y{\boldsymbol{y}}
\def\t{\boldsymbol{t}}
\def\al{\alpha}
\def\va{\varphi}
\def\var{\varepsilon}
\def\la{\lambda}
\def\1{\mathbb 1}
\def\1{\mathbbm{1}}
\begin{document}

\vskip5cm

\centerline{\textbf{ RANDOMIZED  LIMIT THEOREMS }}
\vskip0.2cm

\centerline{\textbf{  FOR  STATIONARY ERGODIC  RANDOM PROCESSES AND FIELDS }}

\vskip 1cm

\centerline{Youri Davydov}   %{\fnms{} \snm{}\ead[label=e1]{axt12@psu.edu}}
\vskip 0.2 cm
\noindent
{\it Faculty of Mathematics and Computer Sciences of St. Petersburg  State University, Russia, and Department of Mathematics of Lille University, France.}

\vskip 0.5 cm

\centerline{ Arkady  Tempelman}
\vskip 0.2 cm
\noindent
{\it Department of Mathematics and
Department of Statistics,
the Pennsylvania State University,}
 USA.

\vskip0.8cm
\begin{abstract}
We consider "randomized" statistics constructed  by using a finite  number of   observations  a  random field at  randomly chosen points. We generalize  the invariance principle (the functional CLT), the Glivenko--Cantelli theorem,  the theorem about convergence to the Brownian bridge and the Kolmogorov   theorem  about the limit distribution of the empirical distribution function,  as well as an improved version of the CLT in     A. Tempelman, Randomized multivariate  central limit theorems for  ergodic homogeneous  random fields,
Stochastic Processes and their Applications.
 143 (2022), 89-105.  The randomized approach, introduced in the mentioned  work,  allows to extend these theorems to  all ergodic  homogeneous random fields on $\Z^m$ and $\R^m.$

\end{abstract}

\vskip 2cm

\emph{MSC classification}:
Primary  60F05;
60G10; secondary 37A30

\emph{Keywords:}
 Central Limit Theorem; Pointwise Ergodic Theorem; stationary random process;
 homogeneous random fields; invariance principle; Glivenko-Cantelli theorem; Brownian bridge, empirical processes.

\newpage

 \section{Preliminaries}\label{P}
\subsection{Short review of the article}
The article is devoted to the extension of  the main Probability limit theorems   to ergodic stationary random processes and to ergodic homogeneous random fields. Our approach is based on consideration of "randomized" statistics, i.e. statistics  constructed  by using a finite  number of   observations of  a  random field at  randomly chosen points, introduced in\cite{T1}.

In subsection \ref{randomizing} and \S \ref{randomization} we define finite  randomizing sets in the  "time" space and randomized statistics, calculated  on restrictions of the fields to these sets.

In \S \ref{Lindeberg} the fulfillment   of a "randomized" form of the  Lindeberg condition is proved for ergodic homogeneous random fields, possessing the second moment; it is essentially used in the sequel.

In \S \ref{generalCLT} we consider  two kinds  of the CLT. Subsection  \ref{generCLT} is devoted to "randomized" versions  of the classical CLTs  which are valid for all ergodic homogeneous fields; in these  theorems the condition $E[|X(0)|^2]<\infty $ replaces the condition: $E[|X(0)|^{2+\delta}]<\infty $ for some  $\delta$, imposed by the second author in   \cite{T1}.
The "randomized" functional central limit theorem (invariance principle) is considered in subsection \ref{IP}.

In \S\ref{ED} two  theorems related to the limiting behavior of the randomized empirical distribution functions (EDF) are considered: a randomized version of the Glivenko--Cantelli theorem for multivariate distributions of  ergodic homogeneous random fields and a general theorem devoted   the limit of  the distributions of the EDF; as a corollary, the randomized version of the Kolmogorov convergence theorem for empirical distributions is derived (of course, these theorems are also valid for  ergodic homogeneous random fields).

\subsection{The "time" set  $T$}\label{time} In this paper we study random fields defined on a set   $T$, which is   the $m$-dimensional Euclidean space $\R^m$ or the $m$-dimensional  integer lattice $\Z^m,\; m\ge 1 $  (when $m=1$, the random fields turn into random processes or random sequences). We denote by $\mathcal B$ the Borel $\sigma$-field on $T$ (if $T=\Z^m$, then $\B$ coincides with  the collection of all subsets of $\Z^m$, and each function $f$ on  $\Z^m$ is $\B$-measurable);  $\la$  is the Lebesgue measure on $\R^m$ and the counting measure on $\Z^m$ (in the latter  case $\la(A)$ is the cardinality of $A\subset\Z^m$,  and $\int_Af(t)\la(dt)= \sum_{t\in A}f(t)$ if $\la(A)<\infty$).

\subsection{Random fields}\label{fields}

We consider a   $d$-dimensional  random field   $X(t)=(X^1(t),...,X^d(t)),\,t\in T, $ over  a probability space $(\Omega_X,\F_X,P_{X})$.

Let us recall several definitions.  The  field $X$ is (strict-sense)\emph{ homogeneous} if all  finite dimensional distributions of $X$ are shift-invariant:
$$P_X\{\omega:X(t_1+t,\omega)\in A_1,...,X(t_k+t,\omega)\in A_k\}=$$
$$P_X\{\omega: X(t_1,\omega)\in A_1,...,X(t_k,\omega)\in A_k\}$$

  \noindent for all  $t,t_i\in T , \;\;i, k\in \N$ and for all sets $A_i$,  belonging to  the Borel  $\sigma$-field  $\B(\R^d)$.

   A family of invertible transformations $\gamma=\{\gamma_t,t\in T\}$ of $\Omega_X$  is said be a \emph{group}   if $\gamma_0\omega\equiv \omega, \gamma_{s+t}= \gamma_s\gamma_t, \gamma^{-1}_t=\gamma_{-t}$, where $s,t\in T,\, \omega\in\Omega_X$. The field  $X$ \emph{is generated by a group} $\gamma$,  if $X(t,\omega)=X(0,\gamma_t\omega),\,t\in T,\,\omega\in\Omega_X$; since $X(0,\gamma_{s+t}\omega)=X(0,\gamma_ s\gamma_t\omega)=X(s,\gamma_t\omega)$, this implies:
   \begin{equation}\label{covariant}
     X(s+t,\omega)=X(s,\gamma_t\omega),s,t\in T.
   \end{equation}
   A family $\gamma$ is \emph{measure preserving}, if the transformations $\gamma_t$ are $\F_X$-measurable and $P_X(\gamma_t\Lambda)=P_X(\Lambda),t\in T,\Lambda\in\F_X$. If     the random field $X$   is ge\-ne\-rated by  a measure preserving group of  transformations   of  $\Omega_X$,   then, by Property \eqref{covariant}, it is   homogeneous. Denote by $\mathcal{I}_X$ the $\sigma$-field of all events $\Lambda \in \F_X$,  which are invariant mod $P_X$ with respect to all transformations $\gamma_t$, i.e. $P_X(\Lambda\triangle\gamma_t\Lambda)=0, t\in T$. The group $\gamma$ is  said to be  \emph{metrically transitive } if $\mathcal{I}_X= \{\Lambda: \Lambda\in\F_X, P_X(\Lambda)=0 \ {\rm or}\ 1\}$.  The field is\emph{ ergodic } if it is   generated by  a metrically transitive measure preserving group $\gamma$.

Without loss of generality, we   assume that  $(\Omega_X,\F_X,P_{X})$ is the probability space of function type, i.e.,  $\Omega_X$ is the space of $\R^d$-valued "sample functions"  $x(\cdot)$ on $T$,  $\F_X$ is the $\sigma$-field generated by the sets \\ $\{x(\cdot):x(t)\in A\},\; t\in T, A\in\B(\R^d),$  and $P_{X}$ is  a probability measure on $\F_X$.  $X$ is the coordinate random field: $X(t,x(\cdot))=x(t),$ where $t\in T,\, x(\cdot)\in \Omega_X$ (sometimes, we  w rite $\omega$ instead of $x(\cdot)$, when we refer to an element of $\Omega_X$).

Consider the     invertible measurable  "shift" transformations $\gamma_t,\;t\in T,$ of  $\Omega_X,$  defined as follows  : $\gamma _t x(\cdot)=x(\cdot+t)$; it is clear that the family $\{\gamma_t\}$ is a group; moreover, $X$ is generated by this group: $$
X(0,\gamma_tx(\cdot))=X(0, x(\cdot+t))=x(0+t)=x(t)=X(t,\cdot)).
$$
The field  is homogeneous, if and only if the shift transformations preserve the measure $P_X$.

Each    component  field $X^l$   may be  considered over   the probability space   $(\Omega_X^l,\F_X^l,P_X^l)$ where $\Omega_X^l$ is the set of scalar functions $x^l(\cdot)$ on $T$, $\F_X^l$ is the $\sigma$-field generated by the events $\{x^l(\cdot):x^l(t)\in D\},\; t\in T,\; D\in\B(\R)$. Let  $A^l$ be an event in $\F_X^l$; the event  $\Lambda_{A^l }:=\{x(\cdot):x^l(\cdot)\in A^l\}\in \F_X$, and  $P_X^l$ is the projection of the measure $P_X$ onto  $\F^l$: for each $A^l\in  \F_X^l , \  P_X^l(A^l) = P_X(\Lambda_{A^l })$.   $X^l$ is the coordinate field: $ X^l(t,x^l(\cdot))=x^l(t),$ where $t\in T,$
$ x^l(\cdot)\in \Omega_X^l$.

 In what follows \emph{we assume that the fields $X^l$ are  homogeneous}, i.e.,  for  each $l$ all transformations  $\gamma_t^l$ preserve the measure $P_X^l$.  It is also assumed  that $ E_X[|X^{l}(0)|]<\infty$    ($ l=1,...,d$) then we set
 $
 \mu^l:=E_X[X^l(0)].$  If $E_X[|X^{l}(0)|^2]<\infty,$ then the variances are denote by $ \sigma^l:= Var_X[X^l(0)].$
\vspace{5pt}

  \emph{If $T=\R^m$, we always assume that the random fields $X^l(t)=X^l(t, x^l)$   are $ \B\times\F^l$-measurable}; by the Fubini theorem, these assumptions imply that \emph{  the sample functions $x^l(\cdot)$ are Borel measurable with $P_X^l$-probability 1 and for each finite Borel measure   $Q$   the integral $\int_Tx^l(t)Q(dt)$ exists with $P_X^l$-probability 1}.

Denote by $\mathcal{I}^l_X$ the $\sigma$-field of all $\gamma$-invariant $\mod(P^l_{X})$ events in $\F_X^l$ (we remind that $X^l$ is said to be ergodic if $\mathcal{I}^l_X$ is trivial).
  $ E_X[X^l(0)|\mathcal{I}^l_X],\linebreak Var_X[X^l(0)|\mathcal{I}^l_X]$ are  the conditional expectation and variance; \emph{in the se\-quel  it is   assumed  that for each $l$ \ $Var_X[X^l(0)|\mathcal{I}_X]>0 $ with $ P^l_{X}$-probability 1}.

  In some cases it is assumed that the $\R^d$-valued random field $X$ is   homoge\-ne\-ous, that is   the transformations $\gamma_t$ preserve the measure $P_X$; then  the transformations $\gamma_t^l$ preserve the measures $P_X^l$, and the components $X^l$ are also  homogeneous;  if the random  field $X$ is ergodic, then all its component fields $X^l$ are ergodic, too.

\subsection{Randomizing random vectors }\label{randomizing} Let $\{q_n^l\}$ be    sequences of    probability measures  on $
\B,\ (l=1,...,d),\; d\in\N$ (these sequences   may coincide for some or even all $l$). For each natural $n$ we consider $d$ mutually independent $k_n$-dimensional random vectors\\
  $\tau_n^l=(\tau^l_{n, 1},...,\tau^l_{n,k_n}),\;$ $l=1,...,d,$ over a probability space $(\Omega_\tau, \F_\tau,P_\tau)$ possessing  the following properties:

  a) the vectors $\tau_n^l$ do not depend on the random field $X,$

\noindent
   and

  b) the components  of each vector $\tau^l$  are i.i.d. $T$-valued random vectors  with the distribution  $q_n^l.$

\noindent
   (Of course, if  $T=\Z^m$ and if for some $l$ the support of $q_n^l$ is finite, then  some points may appear in the sample $\tau^l_{n, 1},...,\tau^l_{n,k_n}$ several times). It is assumed that $k_n\uparrow\infty$ as $n\to\infty$ (the sequence $\{k_n\}$ may also depend on $l$; to simplify the notation,  we always  drop the index  $l$ in $k_n^l$).
Since each scalar field\\  $X^l=X^l(t,\omega) $ is a $\B\times \F_X$-measurable function, $X(\tau^l_{n,i},\omega) $ are random variables over    the probability space $$(\Omega_{X,\tau},\F_{X,\tau},P_{X,\tau}) :=(\Omega_X\times \Omega_\tau,\F_{X}\times \F_\tau,P_{X}\times P_\tau)$$.

\subsection{ Notation}\label{notation} In the sequel  we also use the following notation:

$\bullet$\;\;	
$E_X$, $E_\tau $ and $E_{X,\tau} $ denote the expectation with  respect to the measure $P_X$, $P_\tau$ and $P_{X,\tau}.$

$\bullet$\;\; $Y_n\to Y$ $P$-a.s., $Y_n\overset P\to Y $
 mean  convergence almost sure, respectively  in probability.

$\bullet$\;\;$Y_n\overset {Q}\Longrightarrow Y $ means  convergence  in distribution with respect to the measure $Q.$

$\bullet$\;\; $a:=b$  means  that the quantity $a$ is defined by the expression $b$.

$\bullet$\;\;  $\R,\Z,\N$ denote the sets of real numbers, of integers and of natural numbers.

$\bullet$\;\;  $T=\R^m$ or $\Z^m \ (m\in \N)$.

$\bullet$\;\;$\1_{A}$ or $Ind(A)$ denotes the indicator of the set $A.$

\subsection{Terminology} All results  are stated in the terms of homogeneous random fields (of course, they are valid for stationary random sequences and processes, too).  If $m=1$, the words "homogeneous random field" mean "stationary random process" or "stationary random sequence"; the words  "ball",   "cube" and "parallelepiped" \\ mean "interval" and,  if $m=2$,  these words mean "sphere" "square" and "paralle\-logram,
 respectively.

\section{Randomization using  uniform distributions on\\ subsets $T_n^l$ of $T$}\label{randomization}
Let $X(t)=(X^l(t),...,.X^d(t)),\;t\in T,$ be a  homogeneous random field on $T$, $\{T_n^l\}$ be    sequences of bounded Borel sets of positive measure in $T\\ (l=1,...,d),\; d\in\N$ (these sequences   may coincide for some or even all $l$).
It is supposed  that $\lambda(T^l_n)\to \infty$ as $n\to \infty.$

 We consider the $T_n$-valued random vectors  $\tau^l_{n, 1},...,\tau^l_{n,k_n},\;\;l=1,...,d,$ $n\in \N$, introduced in Subsect. \ref{randomizing}.
\begin{lemma}\label{int}
	If $f$ is a measurable function on $\R$ such that $E_{X}[|f(X(0))|]<\infty$, then $E_{X,\tau}[|f (X(\tau^l_{n,i}))|] <\infty$ and with $P_X$-probability 1\\ $E_\tau[|f (X(\tau^l_{n,i}))|] <\infty$.
\end{lemma}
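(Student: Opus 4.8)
The plan is to reduce the whole statement to a single application of Tonelli's theorem to the nonnegative integrand
$$
g(\omega_X,\omega_\tau):=\big|f\big(X(\tau^l_{n,i}(\omega_\tau),\omega_X)\big)\big|
$$
on the product space $(\Omega_{X,\tau},\F_{X,\tau},P_{X,\tau})$, exploiting the two structural hypotheses on the randomization: the independence of $\tau^l_n$ from the field $X$ (so that $P_{X,\tau}=P_X\times P_\tau$), and the homogeneity of $X$ (so that the one–dimensional marginal distribution of $X$ does not depend on the point at which it is evaluated).

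First I would verify the joint measurability of $g$, which is what makes Tonelli applicable. Since $\tau^l_{n,i}\colon\Omega_\tau\to T$ is $\F_\tau$-measurable and, in the case $T=\R^m$, the field is assumed $\B\times\F_X$-measurable, the map $(\omega_\tau,\omega_X)\mapsto X(\tau^l_{n,i}(\omega_\tau),\omega_X)$ is $\F_\tau\times\F_X$-measurable, being the composition of the $(\F_\tau\times\F_X)/(\B\times\F_X)$-measurable map $(\omega_\tau,\omega_X)\mapsto(\tau^l_{n,i}(\omega_\tau),\omega_X)$ with $X$; composing with the Borel function $|f|$ preserves measurability (for $T=\Z^m$ this is automatic). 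This is the one point that genuinely uses the standing measurability assumption on the field, and I expect it to be the only real obstacle — everything afterwards is bookkeeping with Tonelli.

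Next, integrating first in $\omega_X$ and invoking independence, Tonelli gives
$$
E_{X,\tau}\big[|f(X(\tau^l_{n,i}))|\big]=\int_{\Omega_\tau}E_X\big[|f(X(\tau^l_{n,i}(\omega_\tau)))|\big]\,P_\tau(d\omega_\tau).
$$
For each fixed point $t\in T$ homogeneity yields $X(t)\distributedas X(0)$, hence $E_X[|f(X(t))|]=E_X[|f(X(0))|]<\infty$ by hypothesis. Thus the inner integrand equals the constant $E_X[|f(X(0))|]$ for $P_\tau$-almost every $\omega_\tau$, so the displayed integral equals $E_X[|f(X(0))|]<\infty$, which proves the first assertion (and in fact identifies the value of the expectation).

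Finally, for the almost-sure statement I would integrate in the opposite order: Tonelli also gives
$$
E_{X,\tau}\big[|f(X(\tau^l_{n,i}))|\big]=\int_{\Omega_X}E_\tau\big[|f(X(\tau^l_{n,i}))|\big]\,P_X(d\omega_X),
$$
and since the left-hand side has just been shown finite, the nonnegative integrand $E_\tau[|f(X(\tau^l_{n,i}))|]$ must be finite for $P_X$-almost every $\omega_X$, which is exactly the second claim.
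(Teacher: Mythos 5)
Your proposal is correct and takes the same route as the paper: the paper's entire proof of this lemma is the single sentence that it ``follows immediately from the Fubini--Tonelli Theorem,'' and your writeup simply supplies the details that one-liner leaves implicit --- the joint measurability of $(\omega_\tau,\omega_X)\mapsto |f(X(\tau^l_{n,i}(\omega_\tau),\omega_X))|$ via the standing $\B\times\F_X$-measurability assumption, the identification $E_X[|f(X(t))|]=E_X[|f(X(0))|]$ for every fixed $t$ by homogeneity, and the two orders of integration yielding first the finiteness of $E_{X,\tau}$ and then the $P_X$-a.s.\ finiteness of $E_\tau$. No gaps; the only cosmetic remark is that the inner integrand is constant for \emph{every} $\omega_\tau$, not merely $P_\tau$-almost every one.
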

\begin{proof} It follows immediately from
	 the Fubini - Tonelli Theorem.
	 \end{proof}

 Assume that for each $l\in\{1,...,d\}$ and for each natural number $n$ the    random vectors $\tau^l_{n,i},\;i=1....,k_n $, introduced in Subsect. \ref{randomizing}, are  uniformly distributed (with respect to $\la$)  on the  set $T_n^l$.   In this section  we use  the following  estimators of $\mu^l$ and $(\sigma^l)^2$:
 \begin{gather*}\label{est}
 M_n^l:=\frac1{\la(T_n^l)}\int_{T_n^l}X^l(t)\la(dt) ,\\ V_n^l:=\frac1{\la(T_n^l)}\int_{T_n^l}(X^l(t)-M_n^l)^2\la(dt)=\nonumber
 \frac1{\la(T_n^l)}\int_{T_n^l}(X^l(t))^2\la(dt)-(M_n^l)^2.
\end{gather*}
 If $\omega\in\Omega_X$ is fixed, the random variables $X^l( \tau^l_{n,i},\omega)$ form $d$  triangular arrays and for each $n$   they are independent and identically distributed.
For each measurable function $f$ on $\R$,  such that $  f[X^l(t)]$ is integrable on $ T_n^l$,
 \begin{equation*}\label{Ew}
 E_\tau [f(X^l(\tau^l_{n,i}))]= \frac1{\la(T_n^l)}\int_{T_n^l}f[X^l(t)]\la(dt),\;\; i=1,...,k_n,\;\;l=1,...,d.
   \end{equation*}
In particular,
  \begin{equation*}\label{E}
 E_\tau [X^l(\tau^l_{n,i})]=M^l_n;\;\;  Var_\tau [X^l(\tau^l_{n,i})]=V^l_n.
  \end{equation*}
The above relation  implies:
 \begin{gather}\label{Et} E_\tau[\sum_{i=1}^{k_n}f(X^l(\tau^l_{n,i}))]=
k_n\frac1{\la(T_n^l)}\int_{T_n^l}f[X^l(t)]\la(dt) \ (l=1,...,d).
 \end{gather}

\begin{definition}\label{admis}
We say that a sequence of Borel sets $\{T_n\}$ is\emph{ pointwise avera\-ging} if
 the following \emph{Pointwise Ergodic Theorem} (PET) holds with this sequence:

\emph{Let $f(\cdot)$  be a measurable function on $\R$.  If $X$ is a   scalar  homogeneous  random field on $T$ over $ (\Omega_X,\F_X, P_X)$ with    $E_X[ |f(X(0))|]<\infty$ and $\mathcal{I}_X$  is  the $\sigma$-field of shift-invariant $\mod( P_X)$ events in $\F_X$, then with $P_X$-probability 1}
\begin{equation*}\label{PET}
\lim_{n\to\infty} \frac1{\la(T_n)}\int_{T_n}f(X(t))\la(dt)=E_X[f(X(0))|\mathcal{I}_X].
\end{equation*}
%(if $X$ is ergodic, then $E_X[f(X(0))|\mathcal{I}_X]=E_X[f(X(0))]$ with $P_X$-probability1).
\end{definition}

\begin{example}\label{convex}
In $\R^m$ each increasing sequence of  bounded  convex sets \\ $T_n^l\in \R^m$,    containing  balls $B_n$ of radii $r(B_n)\to\infty $,  is pointwise averaging  (see Corollary 3.3 in Ch. 6 in \cite{T} ). In particular, any sequence of concentric balls $B_n$ with  $r(B_n)\to\infty $ and any increasing sequence of parallelepipeds   $T_n\subset \R^m$ with infinitely growing  edges  is pointwise averaging (in particular,  the sequence  of cubes $[0,n]^m$ has this property). The intersections  of the mentioned sets with $\Z^m$ form pointwise averaging sequences  in $\Z^m$ (see Subsect. 5.2 in \cite {TS}).
\end{example}

\begin{example}\label{homot} Let $ A$ be a  compact set  in $\R^m$,
  	with $\la(A)>0$ and star-shaped with respect to  $0$. If $s_n\uparrow \infty$, the sequence of homothetic sets
$\{s_nA\}$ is  pointwise averaging (see Example 2.9 in  Ch. 5  in \cite{T} or Subsection 5.4.1 in \cite {TS}).
\end{example}
\begin{remark}\label{MET}
  Along with the pointwise averaging sequences of sets, \emph{mean  averaging} sequences may be considered, i.e. sequences $T_n$ for which   The \emph{Mean Ergodic Theorem} (MET) for each homogeneous random field with $E[(X(0))^2]<\infty$:
 $$E_X\left[|\frac1{\la(T_n)}\int_{T_n}X(t)\la(dt)-  E_X[X(0)|\emph{I}_X]|^2\right]\to 0. $$
Under our condition $E[|X(0)|^{2}]<\infty,$ each pointwise averaging sequence is mean averaging (see Corollary 2 in Subsect 9.4 in \cite{Lo}). Consider the Hilbert subspace $H_X$ of $L^2(\Omega_X,\F_X,P_X)$, spanned by the random variables $X(t),$\\ $t\in T$; it is invariant with respect to the shift transformations: $W(\gamma_t\omega)\in H_X$ if $W(\omega)\in H_X$; denote by $I$  the subspace of all random variables invariant with respect to all $\gamma_t$. The conditional expectation $E_X[X(0)|\emph{I}_X]=\tilde E_X[X(0)|I]$, the orthogonal projection of $X(0)$ onto $I$. This  random variable equals  $E[X(0)]$, if $X$ is "wide-sense ergodic", i.e., if $I=\R$; of course this condition is much weaker than ergodicity, which assures that   the subspace of all  $\gamma$-invariant random variables in $L^2(\Omega_X,\F_X,P_X)$ coincides with $\R$.   The class of the mean averaging  sequences is rather wide; in particular, the  monotonicity condition in Example \ref{convex} is redundant.  The MET is essentially used in the sequel (see also Remark \ref{k_n} where the speed of convergence in the MET is discussed).
\end{remark}

\section{Randomized Lindeberg condition}\label{Lindeberg}

Let us remind that the main probability space has the following form\\ $(\Omega_{X,\tau},\F_{X,\tau},P_{X,\tau}) =(\Omega_X\times \Omega_\tau,\F_{X}\times \F_\tau,P_{X}\times P_\tau)$.

\begin{proposition}\label{lindeberg}
	Let  $X$ be a homogeneous random field, $E[(X(0))^2]<\infty$,
 and  let $\{T_n\}$ be a pointwise averaging sequence of sets.	
	Then for $P_X$-almost all $\omega \in\Omega_X$   the Lindeberg condition is fulfilled:

for each $\var$ as $n\to \infty$
	\begin{gather*}
		L_n(\omega ): =
		\frac{\sum_{i=1}^{k_n}E_\tau[ [X(\tau_{n,i},\omega)-M_n(\omega)]^2{\mathbf 1}_{B_n}]}
		{{k_n}V_n(\omega)}\to 0,
	\end{gather*}
where $B_n =\{|X(\tau_{n,i},\omega)-M_n(\omega)|>\var k_n^\frac12 (V_n)^{\frac12}\}.$
\end{proposition}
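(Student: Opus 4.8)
The plan is to fix $\omega$ in the full-measure set on which the Pointwise Ergodic Theorem holds simultaneously for the functions $x$, $x^2$, and the truncated functions introduced below, and to show $L_n(\omega)\to 0$ there. First, since for fixed $\omega$ the variables $X(\tau_{n,i},\omega)$, $i=1,\dots,k_n$, are i.i.d.\ under $P_\tau$, every summand in the numerator equals $E_\tau\big[(X(\tau_{n,1},\omega)-M_n)^2\1_{B_n}\big]$, so the factor $k_n$ cancels and
$$L_n(\omega)=\frac{E_\tau\big[(X(\tau_{n,1},\omega)-M_n)^2\1_{\{|X(\tau_{n,1},\omega)-M_n|>\var k_n^{1/2}V_n^{1/2}\}}\big]}{V_n(\omega)}.$$
Because $\tau_{n,1}$ is uniform on $T_n$, rewriting $E_\tau$ as the spatial average $\frac1{\la(T_n)}\int_{T_n}(\cdot)\,\la(dt)$ (as in the displayed identity preceding \eqref{Et}) turns this into a ratio of two spatial averages; I denote the numerator by $N_n$.

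Next I would invoke the PET. Applying it to $f(x)=x$ and $f(x)=x^2$ gives $M_n\to\mu:=E_X[X(0)\,|\,\mathcal I_X]$ and $V_n\to\sigma^2:=Var_X[X(0)\,|\,\mathcal I_X]$ with $P_X$-probability $1$; by the standing assumption $\sigma^2>0$ a.s., the denominator $V_n(\omega)$ is bounded away from $0$ for large $n$, so it suffices to prove $N_n\to 0$. Moreover the cut-off level $c_n:=\var k_n^{1/2}V_n^{1/2}\to\infty$, since $k_n\uparrow\infty$ and $V_n\to\sigma^2>0$.

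The heart of the argument is to control $N_n$ despite the fact that both the centering $M_n$ and the threshold $c_n$ move with $n$, which prevents a direct application of the PET to a single fixed integrand. I would resolve this by a sandwich with a fixed level. Fix $A\ge 1$ and set $\Phi_A(x):=(x-\mu)^2\1_{\{|x-\mu|>A\}}$. For $n$ large (depending on $\omega$) one has $|M_n-\mu|\le 1$ and $c_n>A+1$, so on $\{|X(t)-M_n|>c_n\}$ one gets $|X(t)-\mu|>A$ and hence $|X(t)-M_n|\le |X(t)-\mu|+1\le 2|X(t)-\mu|$, giving $(X(t)-M_n)^2\le 4(X(t)-\mu)^2$. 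Therefore
$$N_n\le \frac{4}{\la(T_n)}\int_{T_n}\Phi_A(X(t))\,\la(dt).$$
Since $E_X[\Phi_A(X(0))]\le E_X[(X(0)-\mu)^2]<\infty$, the PET yields $\limsup_n N_n\le 4\,E_X[\Phi_A(X(0))\,|\,\mathcal I_X]$ for every fixed $A$.

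Finally, letting $A\to\infty$ and applying dominated convergence with dominating function $(X(0)-\mu)^2$ (integrable by hypothesis, while $\Phi_A\downarrow 0$ pointwise) gives $E_X[\Phi_A(X(0))\,|\,\mathcal I_X]\to 0$ a.s.; hence $N_n\to 0$ and $L_n(\omega)=N_n/V_n(\omega)\to 0$. The main obstacle, as indicated, is precisely the double $n$-dependence in $M_n$ and $c_n$; the fixed-level sandwich followed by the $A\to\infty$ limit is what circumvents it, reducing everything to the PET applied to fixed integrable functions.
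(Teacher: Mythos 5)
Your proof is correct and follows essentially the same route as the paper's: rewrite $L_n(\omega)$ as a spatial average using the uniformity of the $\tau_{n,i}$, use the PET to get $M_n\to\mu$ and $V_n\to\sigma^2>0$ (so the threshold $\var k_n^{1/2}V_n^{1/2}\to\infty$), replace the moving centering and moving threshold by a fixed truncation level, apply the PET to the truncated integrand, and let the level tend to infinity using integrability of $(X(0)-\mu)^2$. Your single pointwise sandwich $(X(t)-M_n)^2\1_{C_n(t)}\le 4(X(t)-\mu)^2\1_{\{|X(t)-\mu|>A\}}$ merely streamlines the paper's two-stage decomposition into $\Sigma_n^{(1)},\Sigma_n^{(2)}$ and then $\delta_n^{(1)},\delta_n^{(2)}$, with your limit $A\to\infty$ playing exactly the role of the paper's $C\to\infty$.
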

\begin{proof}
For a fixed "good" $\omega$, we will  apply the Lindeberg theorem (see, e.g., Theorem 27.2 in \cite{B}) to the random variables over the space $(\Omega_\tau,\F_\tau, P_\tau)$. By relation \eqref{Et},  for each $\omega\in \Omega_X$ and for each $\var>0$, the Lindeberg  fraction for the random variables $X(\tau_{n,i}(w),\omega),\; i=1,...,k_n,$	
is given by
	\begin{gather*}
	L_n(\omega ) =
	\frac{1}{V_n(\omega)}\frac{1}{\la(T_n)}\int_{T_n}
	(X(t,\omega) -M_n(\omega))^{2}{\mathbf 1}_{C_n(t)}\la(dt),
	\end{gather*}
where $C_n(t) = \{|X(t,\omega) -M_n(\omega)|>\var k_n^\frac12 (V_n)^{\frac12}\}.$	

We have to prove that with $P_X$-probability 1 $L_n(\omega)\to 0$.
Since the sequence $\{T_n\}$ is pointwise averaging, by the PET, with $P_X$-probability 1
\begin{equation}\label{li1}
	M_n(\omega)\to z(\omega),
		\end{equation}
\begin{gather}	\label{li2}
	 V_n (\omega)= \frac1{\la(T_n)}\int_{T_n}(X(t,\omega) -M_n(\omega))^{2}\la(dt)\to v(\omega),
	\end{gather}
where
\begin{gather}
z(\omega) = E_X[X(0,\omega)|\mathcal{I}_X](\omega),
\end{gather}
\begin{gather}
 v(\omega) =  Var_X[(X(0),\omega)^{2}| \mathcal{I}_X](\omega)>0.\label{li3}
\end{gather}
It is clear that
\begin{gather*}
	L_n(\omega )\leq \Sigma^{(1)}_n + \Sigma^{(2)}_n,
\end{gather*}
where
$$
\Sigma^{(1)}_n = \frac{1}{V_n(\omega)}\frac{2}{\la(T_n)}\int_{T_n}
(X(t,\omega)-z(\omega))^{2}{\mathbf 1}_{C_n(t)}\la(dt),
$$
$$
\Sigma^{(2)}_n = \frac{1}{V_n(\omega)}\frac{2}{\la(T_n)}\int_{T_n}
(z(\omega)-M_n)^2{\mathbf 1}_{C_n(t)}\la(dt).
$$
Due to (\ref{li1})    with $P_X$-probability 1
\begin{equation}\label{sigma2}
\Sigma^{(2)}_n \leq (z(\omega)-M_n(\omega))^2\frac2 {V_n(\omega)}\to 0.
\end{equation}
Consider $\Sigma^{(1)}_n.$ Denote by $\Lambda\in \F_X$ the set of $P_X$-measure 1 in  $\Omega$ where the   limits \eqref{li1} and \eqref{li2} exist, and fix some  $\omega\in\Lambda$. It is clear that $k_nV_n(\omega)\to \infty$ as $n\to\infty$.
For each $C>0$, let us also fix some $m_C(\omega)\in\N$   such that,  if $n>m_C(\omega)$, then $|M_n(\omega)-z(\omega)|<\frac12\var C $ and $(k_nV_n(\omega))^\frac12>C>0$.

As for $n>m_C(\omega) $
$$
\{|X(t,\omega)-M_n|>\var k_n^\frac12 (V_n)^{\frac12}\}\subset
\{|X(t,\omega)-z(\omega)|>\frac{\var C}{2}\}\cup \{|z(\omega)-M_n|>\frac{\var C}{2}\},
$$
we have:
\begin{equation}\label{sigma1}
\Sigma^{(1)}_n \leq \delta^{(1)}_n + \delta^{(2)}_n,
\end{equation}
where
$$
\delta^{(1)}_n = \frac{1}{V_n(\omega)}\frac{2}{\la(T_n)}\int_{T_n}
(X(t,\omega)-z(\omega))^{2}
{\mathbf 1}_{\{|X(t,\omega)-z(\omega)|>\frac{\var C}{2}\}}\la(dt),
$$
$$
\delta^{(2)}_n = \frac{1}{V_n(\omega)}\frac{2}{\la(T_n)}\int_{T_n}
(X(t,\omega)-z(\omega))^{2}
{\mathbf 1}_{\{|z(\omega)-M_n|>\frac{\var C}{2}\}}\la(dt).
$$
By the PET, with $P_X$-probability 1
\begin{equation}\label{delta1}
\lim_n \delta^{(1)}_n = \frac{2}{v(\omega)}
E\left[(X(0,\omega)-z(\omega))^{2}
{\mathbf 1}_{\{|X(0,\omega)-z(\omega)|>\frac{\var C}{2}\}}\right],
\end{equation}
and due to (\ref{li1})
\begin{equation}\label{delta2}
\lim_n \delta^{(1)}_n = \frac{2}{v(\omega)}\lim_n	\frac{1}{\la(T_n)}\int_{T_n}
	(X(t,\omega)-z(\omega))^{2}\la(dt)\cdot \lim_n
{\mathbf 1}_{\{|z(\omega)-M_n|>\frac{\var C}{2}\}}	= 0.
\end{equation}
It follows from (\ref{sigma2})-(\ref{delta2}) that
$$
\limsup_n 	L_n(\omega )\leq 2E	\left[(X(0,\omega)-z(\omega))^{2}
{\mathbf 1}_{\{|X(0,\omega)-z(\omega)|>\frac{\var C}{2}\}}\right].
$$
Letting $C$ tend to infinity, we finally obtain with $P_X$-probability 1
$$
\lim_n 	L_n(\omega ) = 0.
$$
\end{proof}

\section{Generalizations of the Central Limit Theorem}\label{generalCLT}
\subsection{On the CLT} \label{intr_CLT}The Central Limit Theorem (CLT) is one of the remarkable statements of  Probability. Originally proved for sequences of independent random variables, in many works this theorem was generalized to stationary random processes and homogeneous random fields  under some additional assumptions: Markov,  strong mixing and/or rather  strong moment conditions, etc.  (see, e.g., \cite{Bo,Br1,Br2,Bu, BZ,BD, CDV, De, DL, I,I2, IvLe, Le, Ma, MVW, MPU,MP, PZ}, \cite{To}-\cite{ZRP}).

 However, the CLT may  fail even   when the sequence $X_1,X_2,...,$ is  stationary, quite  strongly   mixing, orthogonal  and  with
 $\lim\frac 1n\text{Var}(\sum_{i=1}^n X_i)=\sigma^2>0$ \cite{Herrndorf}; two other interesting counterexamples are provided in \cite{Br2}. Earlier  Chung \cite{Chung2} and Davydov \cite{Davydov1,Davydov2} proved that the CLT may fail for strictly stationary irreducible aperiodic Markov chains with a countable set of states (these and other counterexamples can be found in vol. 3, Chapters 30 and 31 in the book  \cite{Br2}). Ibragimov and Linnik \cite{IL} have  constructed a strongly mixing  stationary sequence  of random variables $X_1,X_2,...,$  with finite variances such that the self-normalized sums $(Var[\sum_{i=1}^nX_i])^{-\frac12}\sum_{i=1}^nX_i$ converge in distribution to a non-normal random variable (see Ch. 19, \S 5 therein). A rather common case when the CLT  fails is when
$\lim\frac 1n\text{Var}[ \sum\limits_{i=1}^{n}{X_i}] =0$ (see, e.g., \cite{Der}, p.153).

 A well-known  source   of counterexamples for the classical CLT for strict sense stationary random sequences is the  class of  coboundaries  with respect to measure preserving transformations. Let $\gamma: \omega \mapsto \gamma \omega$ be an ergodic measure preserving transformation of a probability space $(\Omega,\F,P)$; let $\al\ge1$; then for each $f\in L^\al(\Omega,\F,P), f\ne 0 $ a.s.,  the random sequence $X_i^{(f)}=f(\gamma^i\omega), i=1,2,..., $ is an ergodic strict sense stationary sequence and $E [|X_1^{(f)}|^\al]<\infty$  (see,  e.g., \cite{Kr}, \S 1.4). Let $g\in L^\al(\Omega,\F,P)$
 and  let $f$ be the \emph{coboundary} of $g$: $f(\omega)=g(\gamma\omega)-g(\omega)$.
It is clear that  $E(X_i^{(f)})=0,$ $i=1,2,...$. We have also: $\sum_{i=1}^nX_i^{(f)}=\sum_{i=1}^nf(\gamma^i\omega)=g(\gamma^{n+1}\omega)-g(\gamma\omega)$, hence $E[|\sum_{i=1}^nX_i^{(f)}|^\al\le 2^\al||g||_{L^\al}^\al]$,  and $E[|n^{-\beta}\sum_{i=1}^nX_i^{(f)})|^\al]\to 0$  if $\beta>0$ (when $\al>1$, the converse is true: if $E[f]=0$ and\\ $\sup_nE[(\sum_{i=1}^nX_i^{(f)})^\al]<\infty$, it is a  coboundary of some $g\in L^\al$; see Lemma 5 in \cite{Browder}); the set of coboundaries   is dense in the space $L^\al(\Omega,\F,P)\ominus\mathbb R$, by the ergodic decomposition. It is evident that, if $f$ is a coboundary of a function  $g\in L^\al$, then  $n^{-\beta}\sum_{i=1}^nX_i^{(f)}\to 0$ in probability for any $\beta>0$, and,  if  $g$ is bounded, then this is true in the sense of a.s. convergence.

There is an  extensive literature related to limit theorems  for ``long memory" (or ``long-range dependent") stationary sequences, where the slowly decreasing  dependence between receding terms is characterized by  the rate of  decrease of the correlation function or by properties  of the spectrum near 0) - see \cite{BFGK},  \cite{G}, \cite{S} and the references therein.

\subsection  {Main results of this section}   In the previous  work by the second author \cite{T1},
CLTs for  homogeneous random fields  on $\R^m$ and $\Z^m \ (m\ge 1)$,   satisfying the condition: $E[|X(0)|^{2+\delta}]<\infty$ have been proved. In the present  article the authors prove that these theorems are valid under the weaker  condition $E[(X(0))^{2}]<\infty$.

In this paper  we present   randomized versions  of the   CLT, which are valid  for  each ergodic homogeneous measurable random field $X(\cdot)$   on $\R^m$ or\\ $\Z^m \ (m\ge 1)$, (in particular, for each  ergodic stationary random process and each ergodic stationary random sequence) with a finite second moment (in some versions ergodicity may be omitted). Specifically, observations of the random field at randomly chosen points are used.

If the field X is multivariate and each of  its components satisfies the above conditions, another feature is obtained by the randomization: in Theorems \ref{CLT1}-\ref{CLT} and Corollaries 1, 2 the components of the limit normal vector are independent whatever are the components of $X$.
\emph{When $E[|X(0)|^2]<\infty$, these theorems are valid also in all cases, mentioned in Subsection 1.1, when the conventional CLT fails.}

For example, consider the  stationary random sequences  $\{X_i^{(f)}\}$, which have been introduced in Subsection \ref{intr_CLT} (each sequence is specified by  some coboundary  $f\in L^{2}(\Omega,\F,P))$; we have: $E[|X_1^{(f)}|^{2}]<\infty$; as mentioned above, the set  of  coboundaries is dense in $L^{2}(\Omega,\F,P)\ominus\mathbb R$, and the conventional CLT fails for  $\{X_i^{(f)}\}$, if $f$ a coboundary: the limit is degenerate ($\delta_0$) and not normal. However, the randomized CLTs are valid for all sequences $\{X_i^{(f)}\}$ with $E[|X_i^{(f)}|^2<\infty$.

Randomization allowed us
to assume that the field is only  homogeneous   (in some statements it is assumed   that the field is also ergodic) and its second moment is finite.

The main tools, used in the proofs, are the Lindeberg CLT and the Pointwise and Mean Ergodic Theorems.

To illustrate our results, we state a simple corollary of Theorem 3.
\vskip .25cm
\emph{Let $X(t), t \in  \R$, be an ergodic  stationary measurable random process and $E[(X(0))^{2}]
 <\infty$; denote
$\sigma^2 = V ar[X(0)], M_n =
\frac1n
\int_0^n X(t)dt$. Let $\tau_{n,i}\\ (n = 1, 2, ...$, and $i = 1, ..., n)$
be random variables, independent of $X$, and, for each $n$, independent of each
other and uniformly distributed on $[0, n]$. Then, if $n \to\infty$,
\begin{equation*}
\frac{\sum_{i=1}^{n}(X(\tau_{n,i})-M_n)}{\sigma\sqrt n}\Longrightarrow N(0,1).
%\text{in the joint $(X,\tau)$-distribution.}
\end{equation*}}

A slightly more complex corollary with $\mu = E[X(0)]$ instead of $M_n$ can be
derived from Theorem 4; as usual, if $\sigma$ is known, this statement may be used
for consistent statistical inference on the expectation $\mu$.
\subsection{Randomized Central Limit Theorem}\label{generCLT}

\begin{assumption}\label{assump}
In Theorems \ref{CLT1}--\ref{CLT5},    all $X^l,\;l=1,...,d,$  are  homogeneous random fields  and, for all $l$,
\begin{equation}\label{mom2}
E_X[|X^l(0)|^{2}]<\infty.
\end{equation}
If $X^l$ is not ergodic,  we assume $Var_X[X^l(0)|\mathcal{I}^l_X]>0 $ with $ P^l_{X}$-probability 1, $l=1,...,d$.

We remind that $k_n\in\N,\;  k_n\uparrow\infty$.

 For each $l$,\  $\{T_n^l\}$ is a pointwise averaging sequence of sets.
\end{assumption}
\begin{remark}
 We will prove that, together with the conditions mentioned above, in the non-ergodic case, the condition  $Var_X[X^l(0)|\mathcal{I}^l_X]>0   \ P^l_X{\rm -a.s.},\\  l=1,...,d, $ is \emph{sufficient} for the relation \eqref{Ly5} to hold with $P_X$-probability 1. And, for each fixed  $l$,  it  is also \emph{necessary} for this statement to hold for the component $X^l$. Indeed, if the variance of the conditional distribution $Var_X[X^l(0)|\mathcal{I}^l_X]=0$, then $X^l(0)=E_X[ X^l(0)|\mathcal{I}^l_X]$ \ $P_X^l$-a.s., hence $X^l(0)$ is $\mathcal{I}^I_X$-measurable  and $X^l(t,\omega^l)=X^l(0,\gamma_t^l\omega^l )=X^l(0,\omega^l )$ \  $P_X^l$-a.s. so, for each $t\in T$, $X^l(t)=X^l(0)$ \  $P_X^l$-a.s. It is clear, that this dependence is too strong for any version of the   CLT   to hold for $X^l$. In this case, it becomes senseless: note that  $M^l_n=X^l(0), V_n^l=0;$  therefore,   the  expression
  $$\frac{\sum_{i=1}^{k_n}(X^l(\tau^l_{n,i})-M_n^l)}{k_n^\frac12(V^l_n)^{\frac12}}=\frac{{k_n}(X^l(0)-M^l_n)}{k_n^\frac12(V^l_n)^{\frac12}}$$
 (see \eqref{Ly5}  ) is of type $\frac00$; conditions  \eqref{cond16} and \eqref{cond} for $X^l$ do not hold, and the expressions \eqref{Ly7}, \eqref{Ly3} for $X^l$ are of type $\frac10$.
\end{remark}
Let us remind that the probability space
$$
(\Omega_{X,\tau},\F_{X,\tau},P_{X,\tau}) =(\Omega_X\times \Omega_\tau,\F_{X}\times \F_\tau,P_{X}\times P_\tau).
$$

\begin{lemma}\label{p}
1. Let  $A_n(z)$ be a sequence of events in $\Omega_{X,\tau}$ depending on $z\in\R^d$; for each $\omega\in \Omega_X$ denote: $A_n(\omega,z)=\{t\in\Omega_\tau: (\omega, t)\in A_n(z)\}$, the $\omega$-section of $A(z)$; if  for each $z\in\R^d$    the limit
$$\lim_{n\to\infty} P_\tau (A_n(\omega,z))=\psi(z)$$
exists for  $P_X$-almost all $\omega$,  then for each $z\in\R^d$
\begin{equation}\label{conv}
 \lim_{n\to\infty}P_{X,\tau}(A_n(z))=\psi(z).
\end{equation}
2.  Let  $\{f_n(X,\tau)=(f_n^1(X,\tau),...,f_n^d(X,\tau)), n\in N\}$ be a sequence of  random vectors over $\Omega_{X,\tau}$ and let $Z=(Z^1,...,Z^d)$  be a random vector  over the same probability space. If   $f_n(X,\tau)\to Z$  in $P_\tau$-distribution  with $P_X$-probability 1, then  $f_n(X,\tau)\to Z$ in $P_{X,\tau}$-distribution.
\end{lemma}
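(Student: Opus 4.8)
The plan is to derive both parts from the product structure $P_{X,\tau}=P_X\times P_\tau$ together with Fubini's theorem and the bounded convergence theorem, handling Part 2 as an application of Part 1 to the ``lower-orthant'' events attached to the limiting distribution function.

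For Part 1, the starting point is that, since $A_n(z)\in\F_{X,\tau}=\F_X\times\F_\tau$, the section map $\omega\mapsto P_\tau(A_n(\omega,z))$ is $\F_X$-measurable and, by the Fubini--Tonelli theorem,
$$P_{X,\tau}(A_n(z))=\int_{\Omega_X}P_\tau(A_n(\omega,z))\,P_X(d\omega).$$
Each integrand lies in $[0,1]$, and by hypothesis $P_\tau(A_n(\omega,z))\to\psi(z)$ for $P_X$-almost all $\omega$. Since the constant $1$ is an integrable dominating function on the probability space $\Omega_X$, the bounded (dominated) convergence theorem gives $\l\int_{\Omega_X}P_\tau(A_n(\omega,z))\,P_X(d\omega)=\int_{\Omega_X}\psi(z)\,P_X(d\omega)=\psi(z)$, which is exactly \eqref{conv}. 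This is all of Part 1; no further structure of the sets $A_n(z)$ is used.

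For Part 2, let $F_Z$ denote the distribution function of $Z$ under $P_{X,\tau}$ and, for each $z\in\R^d$, set $A_n(z)=\{(\omega,t)\in\Omega_{X,\tau}:f_n(\omega,t)\le z\}$ (coordinatewise inequality), so that $P_\tau(A_n(\omega,z))=P_\tau\{t:f_n(\omega,t)\le z\}$ is the conditional distribution function of $f_n$ given $\omega$ while $P_{X,\tau}(A_n(z))=F_{f_n}(z)$. The hypothesis that $f_n\to Z$ in $P_\tau$-distribution for $P_X$-almost all $\omega$ means precisely that, off a single $P_X$-null set $N$, the conditional laws converge weakly to the law of $Z$; hence for every continuity point $z$ of $F_Z$ and every $\omega\notin N$ one has $P_\tau(A_n(\omega,z))\to F_Z(z)$. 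Applying Part 1 with $\psi(z)=F_Z(z)$ yields $F_{f_n}(z)\to F_Z(z)$ at each continuity point $z$ of $F_Z$, and since convergence of distribution functions at all continuity points of the limit is equivalent to weak convergence in $\R^d$, this is the assertion $f_n\to Z$ in $P_{X,\tau}$-distribution.

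The argument is essentially routine; the two points deserving a word of care are the measurability of the section probability $\omega\mapsto P_\tau(A_n(\omega,z))$ (which is exactly what Fubini supplies) and the order of quantifiers in Part 2: one must note that the single full-measure set $\Omega_X\setminus N$ on which weak convergence of the conditional laws holds serves simultaneously for every fixed $z$, so the a.e.-convergence hypothesis of Part 1 is genuinely met for each $z$. An alternative, slightly cleaner route to Part 2 bypasses continuity points: work with the conditional characteristic functions $\phi_n(\omega,\theta)=E_\tau[\exp(i\theta\cdot f_n(\omega,\tau))]$, which are bounded by $1$ and converge for $P_X$-almost all $\omega$ to $\phi_Z(\theta)$; bounded convergence gives $E_{X,\tau}[\exp(i\theta\cdot f_n)]\to\phi_Z(\theta)$ for every $\theta\in\R^d$, whence L\'evy's continuity theorem yields the conclusion.
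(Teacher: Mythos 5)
Your proof is correct and follows essentially the same route as the paper's: Fubini--Tonelli plus bounded convergence for Part 1, and for Part 2 an application of Part 1 to the lower-orthant events $A_n(z)=\{(\omega,t):f_n^1(\omega,t)\le z^1,\dots,f_n^d(\omega,t)\le z^d\}$ with $\psi(z)=P(Z^1\le z^1,\dots,Z^d\le z^d)$. If anything, you are slightly more careful than the paper: its proof invokes Part 1 at every $z\in\R^d$, whereas the a.s.\ weak-convergence hypothesis only guarantees $P_\tau(A_n(\omega,z))\to F_Z(z)$ at continuity points of $F_Z$, and your restriction to continuity points (which suffices for weak convergence in $\R^d$), along with the observation that one null set serves all $z$ simultaneously, quietly repairs this elision.
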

\begin{proof}
1. By   the Fubini theorem, $A_n(\omega,z)\subset \F_\tau$ for $P_X$-almost all $\omega\in \Omega_X$. By virtue of the Fubini theorem and  the Lebesgue Dominated Convergence theorem,  we have for each $z\in\R^d$:
\begin{gather*}\label{Ly2}\lim_{n\to\infty}P_{X,\tau}(A_n(z))=\lim_{n\to\infty}E_{X,\tau}[1_{A_n(z)}]=\lim_{n\to\infty}E_{X}[E_{\tau}[1_{A_n(z)}]]=\\
\lim_{n\to\infty}E_{X}[P_{\tau}(A_n(\omega,z)]=
E_{X}[\lim_{n\to\infty}P_{\tau}(A_n(\omega,z)]=E_{X}\psi(z)=\psi(z).
\end{gather*}

2. Apply Statement  1 with
\begin{gather*}
  A_n(z):=\{(\omega,t)\in\Omega_{X,\tau}: f_n^1(\omega,t)\le z^1,...,f_n^d(\omega,t)\le z^d\}, \\ \psi(z)=P_X(Z^1\le z^1,...,Z^d\le z^d).
\end{gather*}
\end{proof}
We denote: $Z =(Z^1,...,Z^d)$ where $Z^1,...,Z^d$ are independent standard normal random variables.
\begin{theorem} \label{CLT1} Under Assumptions  1,
 with $P_X$-probability 1
\begin{equation}\label{Ly5}\left(\frac{\sum_{i=1}^{k_n}(X^1(\tau^1_{n,i})-M_n^1)}{k_n^\frac12(V^1_n)^{\frac12}},...,\frac{\sum_{i=1}^{k_n}(X^d(\tau^d_{n,i})-M_n^d)}
{k_n^\frac12(V^d_n)^{\frac12}} \right)\overset {P_{\tau}}\Longrightarrow Z;%\sim N(0,I)
\end{equation}
\noindent if all $X^l$ are ergodic, then with $P_X$-probability 1

\begin{equation}\label{Ly11}\left(\frac{\sum_{i=1}^{k_n}(X^1(\tau^1_{n,i})-M_n^1)}{k_n^\frac12\sigma^1}, ...,\frac{\sum_{i=1}^{k_n}(X^d(\tau^d_{n,i})-M_n^d)}{k_n^\frac12\sigma^d}\right)\overset {P_{\tau}}\Longrightarrow Z.  
\end{equation}
\end{theorem}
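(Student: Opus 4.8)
The plan is to fix a generic sample path of the field, reduce each coordinate to the classical Lindeberg--Feller theorem for a triangular array of row-wise i.i.d.\ summands under $P_\tau$, and then recover the joint statement from the mutual independence of the randomizers $\tau^1_n,\dots,\tau^d_n$. As a first step I would assemble a single event $\Lambda\in\F_X$ of full $P_X$-measure on which, simultaneously for every $l=1,\dots,d$: the randomized Lindeberg condition of Proposition \ref{lindeberg} holds; and, applying the PET (Definition \ref{admis}) to $X^l$ and to $(X^l)^2$, one has $M_n^l(\omega)\to z^l(\omega):=E_X[X^l(0)\mid\mathcal{I}^l_X](\omega)$ and $V_n^l(\omega)\to v^l(\omega):=Var_X[X^l(0)\mid\mathcal{I}^l_X](\omega)>0$. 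Since a finite intersection of full-measure sets has full measure, and $v^l(\omega)>0$ by Assumptions \ref{assump}, such a $\Lambda$ exists.

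Next I would fix $\omega\in\Lambda$ and work on $(\Omega_\tau,\F_\tau,P_\tau)$, where the array $\{X^l(\tau^l_{n,i},\omega)\}_{i=1}^{k_n}$ is, for each $n$, i.i.d.\ with $P_\tau$-mean $M_n^l(\omega)$ and $P_\tau$-variance $V_n^l(\omega)$. Thus the $l$-th entry of the vector in \eqref{Ly5} is precisely the centered row sum $\sum_i\bigl(X^l(\tau^l_{n,i})-M_n^l\bigr)$ divided by $k_n^{1/2}(V_n^l)^{1/2}=\sqrt{k_n V_n^l}$, i.e.\ its row-variance is normalized to $1$. Proposition \ref{lindeberg} supplies exactly the Lindeberg fraction tending to $0$, so the Lindeberg--Feller CLT gives convergence of each coordinate in $P_\tau$-distribution to $N(0,1)$.

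The step carrying the real content is the passage from these marginal limits to the joint limit with \emph{independent} Gaussian coordinates $Z=(Z^1,\dots,Z^d)$. Here I would exploit that, although the field components $X^1,\dots,X^d$ may be strongly dependent, once $\omega$ is frozen the $l$-th coordinate of the vector in \eqref{Ly5} is a deterministic function of $\tau^l_n$ alone, and the randomizers $\tau^1_n,\dots,\tau^d_n$ are mutually independent under $P_\tau$ (Subsection \ref{randomizing}). Hence the $d$ coordinates are $P_\tau$-independent for every $n$, so their joint $P_\tau$-characteristic function factorizes into the product of the marginal ones; letting $n\to\infty$ and using the marginal convergences yields $\prod_{l}e^{-(u^l)^2/2}$, the characteristic function of $Z$. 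This proves \eqref{Ly5} for every $\omega\in\Lambda$, hence with $P_X$-probability $1$.

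Finally, for \eqref{Ly11} I would note that ergodicity of $X^l$ makes $\mathcal{I}^l_X$ trivial, so $v^l(\omega)=Var_X[X^l(0)]=(\sigma^l)^2$ for $P_X$-a.a.\ $\omega$ and therefore $(V_n^l(\omega))^{1/2}/\sigma^l\to1$. Writing the $\sigma^l$-normalized coordinate as the $(V_n^l)^{1/2}$-normalized coordinate multiplied by the $P_\tau$-constant $(V_n^l(\omega))^{1/2}/\sigma^l$, Slutsky's theorem upgrades each marginal limit, and the same independence argument reassembles the joint limit $Z$. The main obstacle is thus not any individual estimate — those are furnished by Proposition \ref{lindeberg} and the PET — but the clean justification that freezing $\omega$ converts the dependent field components into independent functions of the separate randomizers, which is precisely what forces the limiting covariance to be the identity.
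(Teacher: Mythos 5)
Your proposal is correct and follows essentially the same route as the paper's proof: apply the Lindeberg theorem coordinatewise on the frozen-$\omega$ probability space $(\Omega_\tau,\F_\tau,P_\tau)$ using Proposition \ref{lindeberg}, pass to the joint limit via the mutual independence of the randomizers $\tau^1_n,\dots,\tau^d_n$, and upgrade to \eqref{Ly11} via $(V_n^l)^{1/2}\to\sigma^l$ and Slutsky in the ergodic case. The only difference is presentational: you spell out (via characteristic-function factorization) the independence step that the paper dispatches in a single sentence, which is a worthwhile clarification but not a different argument.
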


\begin{proof}
We may apply  the Lindeberg  theorem and obtain:    with $P_X$-probability 1
for each $l$
\begin{gather}\label{Lyap1}
	\frac{\sum_{i=1}^{k_n}(X^l(\tau^l_{n,i})-M^l_n)}{k_n^\frac12(V^l_n)^{\frac12}}\overset {P_{\tau}}\Longrightarrow  Z^l.
\end{gather}
If $X$ is ergodic, then  $ (V^l_n)^\frac12\to \sigma^l $ with $P_X$-probability 1, and, by the Slutsky theorem (\cite{L}, Theorem 23.3),
{with $P_X$-probability 1}
\begin{gather}\label{Lyap2}
	\frac{\sum_{i=1}^{k_n}(X^l(\tau_{n,i})-M^l_n)}{k_n^\frac12\sigma^l}
\overset {P_{\tau}}\Longrightarrow  Z^l.	
\end{gather}
Since for each $n$ the $d$ vectors  $(\tau^l_{n,i},i=1,...,k_n),\;\;l =1,...,d,$ are mutually independent, using  \eqref{Lyap1} and \eqref{Lyap2} we obtain
the convergence (\ref{Ly5}), respectively  (\ref{Ly11}), of vectors.
\end{proof}

Since the limiting distributions in (\ref{Ly5}) and  (\ref{Ly11})
 with $P_X$-probability 1 are the same, we immediately deduce from the
 previous theorem the non-conditional weak convergence:

\begin{theorem}\label{CLT2} Under Assumptions 1,
	
	\begin{equation*}\left(\frac{\sum_{i=1}^{k_n}(X^1(\tau^1_{n,i})-M_n^1)}{k_n^\frac12(V^1_n)^{\frac12}},...,\frac{\sum_{i=1}^{k_n}(X^d(\tau^d_{n,i})-M_n^d)}
		{k_n^\frac12(V^d_n)^{\frac12}} \right)\overset {P_{X,\tau}}\Longrightarrow  Z;%\sim N(0,I)
	\end{equation*}
	
	\noindent if all $X^l$ are ergodic, then with $P_X$-probability 1
	\begin{equation*}\label{Ly9}\left(\frac{\sum_{i=1}^{k_n}(X^1(\tau^1_{n,i})-M_n^1}{k_n^\frac12\sigma^1}, ...,\frac{\sum_{i=1}^{k_n}(X^d(\tau^d_{n,i})-M_n^d)}{k_n^\frac12\sigma^d}\right)\overset {P_{X,\tau}}\Longrightarrow   Z.
	\end{equation*}
\end{theorem}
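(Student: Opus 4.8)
The plan is to deduce Theorem~\ref{CLT2} from Theorem~\ref{CLT1} using part~2 of Lemma~\ref{p}. Theorem~\ref{CLT1} establishes that, with $P_X$-probability~1, the vector of normalized sums converges to $Z$ in $P_\tau$-distribution; this is precisely the hypothesis ``$f_n(X,\tau)\to Z$ in $P_\tau$-distribution with $P_X$-probability~1'' of Lemma~\ref{p}.2, and the conclusion of that lemma is exactly the unconditional $P_{X,\tau}$-convergence asserted here. So the proof is essentially a one-line invocation.

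\begin{proof}
For the first assertion, set
$$
f_n(X,\tau)=\left(\frac{\sum_{i=1}^{k_n}(X^1(\tau^1_{n,i})-M_n^1)}{k_n^\frac12(V^1_n)^{\frac12}},\ldots,\frac{\sum_{i=1}^{k_n}(X^d(\tau^d_{n,i})-M_n^d)}{k_n^\frac12(V^d_n)^{\frac12}}\right).
$$
By \eqref{Ly5} in Theorem~\ref{CLT1}, with $P_X$-probability~1 we have $f_n(X,\tau)\overset{P_\tau}\Longrightarrow Z$. Hence the hypothesis of Lemma~\ref{p}, part~2, is satisfied, and that lemma gives $f_n(X,\tau)\overset{P_{X,\tau}}\Longrightarrow Z$, which is the first claim. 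For the second assertion, in the ergodic case replace each denominator $(V^l_n)^{\frac12}$ by $\sigma^l$ and apply the identical argument to \eqref{Ly11} in place of \eqref{Ly5}.
\end{proof}

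The only conceptual content is the passage from conditional ($P_\tau$, for $P_X$-a.e.~$\omega$) convergence to unconditional ($P_{X,\tau}$) convergence, and this is already packaged in Lemma~\ref{p}. There is no genuine obstacle: the work was done in establishing Theorem~\ref{CLT1} (via the randomized Lindeberg condition of Proposition~\ref{lindeberg}, the PET, the Slutsky theorem, and independence of the $d$ randomizing vectors) and in Lemma~\ref{p}, whose part~2 reduces via part~1 to interchanging a limit with $E_X$ by dominated convergence, the integrands $P_\tau(A_n(\omega,z))$ being bounded by~$1$. I would note only the mild subtlety that one must check the limiting law does not depend on $\omega$ — but since $Z$ is a fixed standard normal vector in both \eqref{Ly5} and \eqref{Ly11}, the a.s.-limit $\psi(z)=P_X(Z^1\le z^1,\ldots,Z^d\le z^d)$ is deterministic, so $E_X\psi(z)=\psi(z)$ and Lemma~\ref{p} applies verbatim.
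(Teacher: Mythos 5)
Your proposal is correct and matches the paper's own argument: the paper likewise deduces Theorem~\ref{CLT2} directly from Theorem~\ref{CLT1}, noting that the $P_\tau$-limit law in \eqref{Ly5} and \eqref{Ly11} is the same fixed Gaussian law for $P_X$-almost every $\omega$, which is exactly the passage to unconditional convergence packaged in Lemma~\ref{p}, part~2. Your explicit remark that $\psi(z)$ is deterministic (so $E_X\psi(z)=\psi(z)$) is precisely the point the paper's one-line proof relies on.
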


The following Theorems \ref{CLT3}  and \ref{CLT4} can be deduced subsequently from Theorems \ref{CLT1}-\ref{CLT2} literarily as Th. 3 and Th. 4 in \cite{T1}.
\begin{theorem}\label{CLT3} In addition to the above Assumptions \ref{assump}, assume that

\begin{equation}\label{cond16}
	k_n^\frac12(M^l_n-\mu^l)  \to 0 \ \text{with $P_X$-probability 1},\ l=1,...,d  \footnote{See \S\ref{remarks} for remarks on this condition and condition  \eqref{cond17}.}^).
\end{equation}
Then with $P_X$-probability 1

\begin{equation*}\label{Ly7}\left(\frac{\sum_{i=1}^{k_n}(X^1(\tau^1_{n,i})-\mu^1)}{k_n^\frac12(V^1_n)^{\frac12}},...,\frac{\sum_{i=1}^{k_n}(X^d(\tau^d_{n,i})-\mu^d)}
	{k_n^\frac12(V^d_n)^{\frac12}} \right)\overset {P_{\tau}}\Longrightarrow  Z;%\sim N(0,I)
\end{equation*}

\noindent if all $X^l$ are ergodic, then with $P_X$-probability 1
\begin{equation*}\label{Ly8}\left(\frac{\sum_{i=1}^{k_n}(X^1(\tau^1_{n,i})-\mu^1)}{k_n^\frac12\sigma^1}, ...,\frac{\sum_{i=1}^{k_n}(X^d(\tau^d_{n,i})-\mu^d)}{k_n^\frac12\sigma^d}\right)\overset {P_{\tau}}\Longrightarrow  Z.
\end{equation*}
\end{theorem}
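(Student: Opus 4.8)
The plan is to reduce the statement to Theorem \ref{CLT1} by a Slutsky-type argument, exploiting the fact that replacing the empirical center $M_n^l$ by the true mean $\mu^l$ introduces only a perturbation that is \emph{nonrandom with respect to $P_\tau$} and that vanishes under the new hypothesis \eqref{cond16}. First I would fix a component index $l$ and record the elementary identity
\[
\sum_{i=1}^{k_n}(X^l(\tau^l_{n,i})-\mu^l)=\sum_{i=1}^{k_n}(X^l(\tau^l_{n,i})-M_n^l)+k_n(M_n^l-\mu^l),
\]
so that, dividing by $k_n^{\frac12}(V_n^l)^{\frac12}$,
\[
\frac{\sum_{i=1}^{k_n}(X^l(\tau^l_{n,i})-\mu^l)}{k_n^{\frac12}(V_n^l)^{\frac12}}
=\frac{\sum_{i=1}^{k_n}(X^l(\tau^l_{n,i})-M_n^l)}{k_n^{\frac12}(V_n^l)^{\frac12}}
+\frac{k_n^{\frac12}(M_n^l-\mu^l)}{(V_n^l)^{\frac12}}.
\]

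The first term on the right converges in $P_\tau$-distribution to $Z^l$ with $P_X$-probability $1$, by \eqref{Lyap1} from the proof of Theorem \ref{CLT1}. For the second term I would observe that, for $P_X$-almost every fixed $\omega$, it is a \emph{constant} as far as the measure $P_\tau$ is concerned. By the PET (already invoked in the proof of Proposition \ref{lindeberg}) one has $V_n^l(\omega)\to v^l(\omega)>0$ with $P_X$-probability $1$, while the additional hypothesis \eqref{cond16} gives $k_n^{\frac12}(M_n^l(\omega)-\mu^l)\to 0$ with $P_X$-probability $1$; hence the correction term tends to $0$ with $P_X$-probability $1$. Applying the Slutsky theorem fiberwise on $(\Omega_\tau,\F_\tau,P_\tau)$ for a fixed good $\omega$ — a sequence converging in distribution, plus a deterministic sequence tending to zero, still converges to the same limit — yields the first marginal statement. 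For the ergodic case I would instead use $V_n^l\to(\sigma^l)^2$, so that the correction term becomes $k_n^{\frac12}(M_n^l-\mu^l)/\sigma^l\to 0$ by \eqref{cond16}, and the same Slutsky argument applied to \eqref{Lyap2} gives marginal convergence with normalization by $\sigma^l$.

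Finally, to pass from the $d$ marginal statements to the joint convergence of the vector, I would invoke, exactly as at the end of the proof of Theorem \ref{CLT1}, the mutual independence (for each fixed $n$ and each fixed $\omega$) of the $d$ vectors $(\tau^l_{n,i})_{i=1}^{k_n}$, $l=1,\dots,d$, under $P_\tau$; this makes the $d$ normalized sums $P_\tau$-independent and forces their joint limit to be the vector $Z$ of independent standard normals.

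The argument is essentially routine, and indeed the excerpt announces that it mimics Th.\ 3 in \cite{T1}. The only point requiring care is the recognition that the correction term is constant in $\tau$ for fixed $\omega$, so Slutsky's theorem is applied on $(\Omega_\tau,\F_\tau,P_\tau)$ rather than on the product space. The main (mild) obstacle is bookkeeping the exceptional sets: one must intersect, over $l=1,\dots,d$, the $P_X$-null sets arising from \eqref{Lyap1} (respectively \eqref{Lyap2}), from the PET convergence of $V_n^l$, and from \eqref{cond16}, to obtain a single set of full $P_X$-measure on which all the required limits hold simultaneously.
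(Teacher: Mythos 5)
Your proposal is correct and is essentially the argument the paper intends: the paper proves Theorem \ref{CLT3} by deducing it from Theorems \ref{CLT1}--\ref{CLT2} exactly as in \cite{T1}, i.e., via the same decomposition $\sum_i(X^l(\tau^l_{n,i})-\mu^l)=\sum_i(X^l(\tau^l_{n,i})-M_n^l)+k_n(M_n^l-\mu^l)$ and a fiberwise Slutsky argument, with the correction term $k_n^{\frac12}(M_n^l-\mu^l)/(V_n^l)^{\frac12}$ vanishing $P_X$-a.s.\ by \eqref{cond16} and the PET limit $V_n^l\to v^l>0$ (or $(\sigma^l)^2$ in the ergodic case), and joint convergence from the $P_\tau$-independence of the $d$ randomizing vectors. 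Your attention to the correction term being constant in $\tau$ for fixed $\omega$ and to intersecting the $P_X$-null sets over $l$ is exactly the bookkeeping the cited deduction requires.
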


\begin{theorem}\label{CLT4} Let the conditions stated in Assumptions \ref{assump} be fulfilled
	and
	\begin{equation}\label{cond17}
		k_n^\frac12(M^l_n-\mu^l) \overset {P_X} \Longrightarrow 0.
	\end{equation}
	Then
	\begin{equation*}\label{Ly3}\left(\frac{\sum_{i=1}^{k_n}(X^1(\tau^1_{n,i})-\mu^1)}{k_n^\frac12(V^1_n)^{\frac12}},...,\frac{\sum_{i=1}^{k_n}(X^d(\tau^d_{n,i})-\mu^d)}
		{k_n^\frac12(V^d_n)^{\frac12}} \right)\overset {P_{X,\tau}}\Longrightarrow Z;%\sim N(0,I)
	\end{equation*}
	
	\noindent if all $X^l$ are ergodic, then
	\begin{equation*}\label{Ly4}\left(\frac{\sum_{i=1}^{k_n}(X^1(\tau^1_{n,i})-\mu^1)}{k_n^\frac12\sigma^1}, ...,\frac{\sum_{i=1}^{k_n}(X^d(\tau^d_{n,i})-\mu^d)}{k_n^\frac12\sigma^d}\right)\overset {P_{X,\tau}}\Longrightarrow Z.
	\end{equation*}
\end{theorem}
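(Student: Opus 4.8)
The plan is to deduce Theorem \ref{CLT4} from the unconditional convergence already established in Theorem \ref{CLT2} by a Slutsky-type argument carried out directly on the joint probability space $(\Omega_{X,\tau},\F_{X,\tau},P_{X,\tau})$. The key observation is that replacing the random centering $M^l_n$ by the constant $\mu^l$ changes each coordinate of the statistic only by a term that does not depend on $\tau$. Indeed, for every $l$,
\begin{equation*}
\frac{\sum_{i=1}^{k_n}(X^l(\tau^l_{n,i})-\mu^l)}{k_n^\frac12(V^l_n)^{\frac12}}
=\frac{\sum_{i=1}^{k_n}(X^l(\tau^l_{n,i})-M^l_n)}{k_n^\frac12(V^l_n)^{\frac12}}+R^l_n,
\qquad
R^l_n:=\frac{k_n^\frac12(M^l_n-\mu^l)}{(V^l_n)^{\frac12}},
\end{equation*}
and the analogous identity holds in the ergodic case with $\sigma^l$ in place of $(V^l_n)^{\frac12}$ and remainder $\tilde R^l_n:=k_n^\frac12(M^l_n-\mu^l)/\sigma^l$.

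First I would show that the remainder vector tends to $0$ in $P_{X,\tau}$-probability. Since $\{T^l_n\}$ is pointwise averaging and $E_X[|X^l(0)|^2]<\infty$, the PET gives $(V^l_n)^{\frac12}\to (v^l)^{\frac12}$ $P_X$-a.s. with $v^l=Var_X[X^l(0)|\mathcal{I}^l_X]>0$ (in the ergodic case $(V^l_n)^{\frac12}\to\sigma^l>0$), so the denominator is $P_X$-a.s. bounded away from $0$ for large $n$. Combined with hypothesis \eqref{cond17}, namely $k_n^\frac12(M^l_n-\mu^l)\overset{P_X}\Longrightarrow 0$, this yields $R^l_n\overset{P_X}\Longrightarrow 0$ (and likewise $\tilde R^l_n\overset{P_X}\Longrightarrow 0$). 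Because $R^l_n$ is $\F_X$-measurable, i.e.\ does not depend on the $\tau$-coordinate, for every $\var>0$ we have $P_{X,\tau}(|R^l_n|>\var)=P_X(|R^l_n|>\var)\to 0$; hence the whole vector $(R^1_n,\dots,R^d_n)$ tends to $0$ in $P_{X,\tau}$-probability.

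Finally I would combine the two ingredients by the Slutsky theorem on $(\Omega_{X,\tau},\F_{X,\tau},P_{X,\tau})$: the $M_n$-centered vector converges in $P_{X,\tau}$-distribution to $Z$ by Theorem \ref{CLT2}, and it is perturbed by a vector tending to $0$ in $P_{X,\tau}$-probability, so the $\mu$-centered vector also converges in $P_{X,\tau}$-distribution to $Z$; the same argument with $\tilde R^l_n$ gives the ergodic ($\sigma^l$-normalized) statement.

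I expect the only genuine subtlety to be the point at which one must abandon the conditional reasoning used for Theorem \ref{CLT3}. There, hypothesis \eqref{cond16} provides $R^l_n(\omega)\to 0$ for $P_X$-almost every fixed $\omega$, so one may invoke Slutsky fibrewise over $(\Omega_\tau,\F_\tau,P_\tau)$ and obtain $P_\tau$-convergence for a.e.\ $\omega$. Under the weaker hypothesis \eqref{cond17} the remainder converges only in $P_X$-probability, so for a fixed $\omega$ the deterministic sequence $R^l_n(\omega)$ need not vanish; the fibrewise Slutsky argument is therefore unavailable, and one is forced to pass to the joint measure via Theorem \ref{CLT2} and apply Slutsky there. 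The $\tau$-independence of $R^l_n$ is precisely what makes the lift from $P_X$-probability to $P_{X,\tau}$-probability immediate.
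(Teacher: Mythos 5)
Your proposal is correct and is essentially the deduction the paper intends: the paper gives no explicit proof, stating only that Theorem \ref{CLT4} ``can be deduced from Theorems \ref{CLT1}--\ref{CLT2} literarily as Th.~3 and Th.~4 in \cite{T1}'', and that deduction is exactly your decomposition with remainder $R^l_n = k_n^{\frac12}(M^l_n-\mu^l)/(V^l_n)^{\frac12}$ followed by Slutsky on the joint space $(\Omega_{X,\tau},\F_{X,\tau},P_{X,\tau})$. Your closing observation --- that the in-probability hypothesis \eqref{cond17} rules out the fibrewise argument used for Theorem \ref{CLT3} and forces the passage through the unconditional Theorem \ref{CLT2}, with $\F_X$-measurability of $R^l_n$ making the lift from $P_X$- to $P_{X,\tau}$-probability immediate --- correctly identifies the precise point separating the two theorems.
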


Condition \eqref{cond17} puts some restrictions to   the growth of the number $k_n$ of observations of the values of the field $X$. It is often of the form $k_n=o(\la(T_n))$ (see Remark \ref{k_n}). On the other hand, the accuracy of the approximation of the normal distribution by the CLT is higher when $k_n$ is large. An obvious way to increase $k_n$ without violation of this condition   is to increase $\la(T_n)$ proportionally. Another way to increase the number of observations without breaking this condition puts more work for the statistician;
 it is based on a special case of Theorem \ref{CLT4}, when, instead of $X$, an auxiliary $\R^{wd}$-valued random field $Y $ is considered, in which  each component $X^l$ participates $w$ times.

\begin{corollary}\label{d_gen}
Let the conditions of Theorem \ref{CLT4} be fulfilled. Consider the  family of independent random variables\\ $\{\tau^{l,u}_{n,i}, n\in\N,\;\;1\le i\le k_n,1\le l\le d,\; 1\le u\le w\}$ which, for each  $n,l,u,i,$ are distributed uniformly on  $T_n^l$.
Then
\begin{equation*}\label{Ly6}\frac{\sum_{i=1}^{k_n}(X^l(\tau^{l,u}_{n,i})-\mu^l)}{k_n^\frac12 (V_n^l)^\frac12} \overset {P_{X,\tau}}\Longrightarrow Z^{l,u}\ ,\;\;l=1,...d,\;\;u=1,...w.
\end{equation*}
where all random variables $Z^{l,u},\;\;l=1,...,d,\;\;u=1,...,w,$ are standard normal and independent.

If the field $X$ is ergodic, the "empirical" standard deviations $V_n^l$ can be replaced by the  "theoretical" standard deviations $\sigma^l$.
\end{corollary}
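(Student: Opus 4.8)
The plan is to reduce the corollary to Theorem \ref{CLT4} by encoding the $w$-fold repetition into a single higher-dimensional field. Specifically, I would introduce the auxiliary $\R^{wd}$-valued random field $Y$ whose components are indexed by pairs $(l,u)$, $1\le l\le d$, $1\le u\le w$, and defined by $Y^{(l,u)}(t):=X^l(t)$ for every $u$; that is, each scalar field $X^l$ is copied $w$ times to form $w$ components of $Y$. Since every component of $Y$ coincides with some $X^l$, the field $Y$ automatically inherits the hypotheses of Assumptions \ref{assump}: each $Y^{(l,u)}$ is homogeneous with $E_X[|Y^{(l,u)}(0)|^2]=E_X[|X^l(0)|^2]<\infty$, and in the non-ergodic case $Var_X[Y^{(l,u)}(0)\mid\mathcal{I}^{(l,u)}_X]=Var_X[X^l(0)\mid\mathcal{I}^l_X]>0$. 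For the associated averaging sets I would take $T_n^{(l,u)}:=T_n^l$, which are pointwise averaging by hypothesis; consequently $M_n^{(l,u)}=M_n^l$, $V_n^{(l,u)}=V_n^l$ and $\mu^{(l,u)}=\mu^l$.

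Next I would equip $Y$ with its randomizing vectors. Since $Y$ has $wd$ components, Subsection \ref{randomizing} requires $wd$ mutually independent $k_n$-dimensional vectors, the $(l,u)$-th of which has i.i.d.\ components uniform on $T_n^{(l,u)}=T_n^l$. The family $\{\tau^{l,u}_{n,i}\}$ in the statement is exactly such a collection: the vectors $\tau_n^{(l,u)}=(\tau^{l,u}_{n,1},\dots,\tau^{l,u}_{n,k_n})$ are mutually independent across all pairs $(l,u)$, and each has i.i.d.\ uniform-on-$T_n^l$ components. Condition \eqref{cond17} for $Y$ reads $k_n^{1/2}(M_n^{(l,u)}-\mu^{(l,u)})=k_n^{1/2}(M_n^l-\mu^l)\overset{P_X}{\Longrightarrow}0$, which is precisely the standing hypothesis. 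Thus all conditions of Theorem \ref{CLT4} hold for $Y$, and applying it yields the stated joint convergence in $P_{X,\tau}$-distribution of the $wd$ normalized sums to a vector of independent standard normals $Z^{l,u}$. In the ergodic case all components $Y^{(l,u)}=X^l$ are ergodic, so the ergodic part of Theorem \ref{CLT4} applies and justifies replacing $V_n^l$ by $\sigma^l$.

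The one point that deserves care — and which is really the whole content of the corollary — is the independence of $Z^{l,u}$ and $Z^{l,u'}$ for a common $l$, despite the fact that $Y^{(l,u)}$ and $Y^{(l,u')}$ are literally identical fields. This is not an obstacle once one recalls why the limit components are independent in the proof of Theorem \ref{CLT1}: for $P_X$-almost every fixed $\omega$, the marginal convergence of each block follows from the Lindeberg theorem applied over $(\Omega_\tau,\F_\tau,P_\tau)$, while the joint convergence to a product limit comes solely from the mutual independence of the randomizing vectors and not from any property of the fields themselves. For fixed $\omega$ the blocks $\sum_i(X^l(\tau^{l,u}_{n,i},\omega)-M_n^l)$ and $\sum_i(X^l(\tau^{l,u'}_{n,i},\omega)-M_n^l)$ are functions of the disjoint, independent randomizers $\tau_n^{(l,u)}$ and $\tau_n^{(l,u')}$, hence are independent as random variables on $\Omega_\tau$; the identity of the underlying fields is immaterial. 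Therefore the $wd$ limits are independent, and \textbf{no argument beyond a direct application of Theorem \ref{CLT4} to $Y$ is needed}.
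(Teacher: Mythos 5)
Your proposal is correct and takes exactly the paper's route: the paper's entire proof is the single line ``Apply Theorem \ref{CLT4} to the random fields $Y^{l,u}(t):=X^l(t)$'', i.e.\ the same reduction to an auxiliary $\R^{wd}$-valued field with each component repeated $w$ times. Your extra work --- verifying Assumptions \ref{assump} and condition \eqref{cond17} for $Y$, and explaining why the limits $Z^{l,u}$ with a common $l$ are nevertheless independent (because, for fixed $\omega$, the blocks are functions of mutually independent randomizing vectors) --- merely makes explicit what the paper leaves implicit, and does so correctly.
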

\begin{proof}
  Apply Theorem \ref{CLT4} to the random fields $Y^{l,u}(t):=X^l(t),\;\;t\in T,\;\; l=1,...,d,\;\;u=1,...,w.$
\end{proof}
 We consider  a generalization of this corollary to various parameters of the marginal or multidimensional distributions of the field $X$; for simplicity we assume that $X$ is $\R$-valued. Let $\theta^l,l=1.,,,,d,$ be   parameters of the distribution of a vector $(X(t_1),...,X(t_k))$. We assume that there exists a $\B(\R^k)$-measurable  functions $f^l(x_1,...,x_k)$ such that
 \begin{equation*}
  E_X[|f^l(X(t_1),...,X(t_k))|^2]<\infty,
\end{equation*}
and $\theta^l=E_X[f^l(X(t_1),...,X(t_k))].$ Denote:
 $$ \sigma_f^l:=[Var_X[f^l(X(t_1),...,X(t_k))]^\frac12 , $$ $$M^l_{f,n}:=\frac1{\la(T_n)}\int_{T_n}f^l(X(t_1+t),...,X(t_k+t))\la(dt), $$
 $$V^l_{f,n}:=\frac1{\la(T_n)}\int_{T_n}(f^l(X(t_1+t),...,X(t_k+t))-M^l_{f,n})^2\la(dt).$$
For each $l$ we consider the  random field
 $$Y_f^l(t):= f^l(X(t_1+t),...,X(t_k+t)),\;t\in T,$$
  over  $(\Omega_X,\F_X,P_X)$). Note that $Y_f^l(0):= f^l(X(t_1),...,X(t_k+t))$, so \emph{the condition
 $E_X|Y_f^l(0) |^2<\infty $ is  fulfilled}; the finite-dimensional distributions of the field $Y_f^l$ coincide with   finite-dimensional distributions of $X^l$, and   $Y_f^l$ is generated by  the  shift transformations: by Property \eqref{covariant},
  \begin{gather*}
   Y_f^l(0,\gamma_t\omega)= f^l(X(t_1,\gamma_t\omega),...,X(t_k,\gamma_t\omega)) =\\
   f^l(X(t_1+t),\omega),...,X(t_k+t),\omega))=Y_f^l(t,\omega);
  \end{gather*}
  therefore, the field $Y_f^l$ is  homogeneous, and,  if the field $X^l$ is ergodic, the field  $Y_f^l$, is ergodic, too.

For example, if $\theta$ is a   mixed moment, i.e.
$$\theta= E[(X (t_1))^{v_1}...(X (t_k))^{v_k}],\; (v_1,...,v_k\in\N),$$
we have: $ f^l(x_1,...,x_k)=(x (t_1))^{v_1}...(x (t_k))^{v_k}$,
  $$Y_f(t)=(X (t_1+t))^{v_1}...(X (t_k+t))^{v_k}.$$

  Application of Corollary \ref{d_gen} to the    fields  $Y_f^l(t)$ brings us to the following statement.
\begin{corollary} Let $X$ be a     homogeneous scalar random field.
 Under the conditions  of Corollary \ref{d_gen} (with $Y^l_{f}, M^l_{f,n},V^l_{f,n},\theta^l$  instead of $ X^l,M^l_n,V^l_{n},\mu^l$,\linebreak
 respectively), for $l=1,...,d,\;\;u=1,...,w,$
\begin{equation*}\frac{\sum_{i=1}^{k_n}( f^l(X(t_1+\tau^{l,u}_{n,i}),...,X(t_k+\tau^{l,u}_{n,i}))-\theta^l) }{k_n^\frac12(V^l_{f,n})^\frac12} \overset {P_{X,\tau}}\Longrightarrow Z^{l,u}.
\end{equation*}
where all random variables $Z^{l,u}$ are standard normal and independent.

If $X$ is ergodic, the "empirical" standard deviations  $(V^l_{f,n})^\frac12$ can be replaced by the "theoretical" ones $ \sigma^l_f$.
\end{corollary}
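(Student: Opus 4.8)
The plan is to recognize the statement as a verbatim application of Corollary \ref{d_gen} to the auxiliary random fields $Y_f^l(t)=f^l(X(t_1+t),\dots,X(t_k+t))$, under the evident dictionary $Y_f^l\leftrightarrow X^l$, $\theta^l\leftrightarrow\mu^l$, $M^l_{f,n}\leftrightarrow M^l_n$, $V^l_{f,n}\leftrightarrow V^l_n$, and $\sigma^l_f\leftrightarrow\sigma^l$. The whole point of introducing the fields $Y_f^l$ is that evaluating $Y_f^l$ at a randomizing point $\tau^{l,u}_{n,i}$ reproduces exactly the summand $f^l(X(t_1+\tau^{l,u}_{n,i}),\dots,X(t_k+\tau^{l,u}_{n,i}))$ in the claim, while $\theta^l=E_X[Y_f^l(0)]$, and $M^l_{f,n}$, $V^l_{f,n}$ are precisely the theoretical mean, empirical mean and empirical variance of $Y_f^l$ over $T_n$. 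Thus the displayed ratio is literally the statistic $k_n^{-\frac12}(V_n^l)^{-\frac12}\sum_i(Y^l(\tau^{l,u}_{n,i})-\mu^l)$ appearing in Corollary \ref{d_gen}, now read for the field $Y_f^l$.

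It therefore remains to check that each $Y_f^l$ satisfies the hypotheses required by Corollary \ref{d_gen}, i.e. those of Theorem \ref{CLT4}; all of these were in fact established in the paragraphs preceding the statement. Namely: (i) $Y_f^l$ is homogeneous, since by Property \eqref{covariant} one has $Y_f^l(0,\gamma_t\omega)=Y_f^l(t,\omega)$, so $Y_f^l$ is generated by the same shift group $\{\gamma_t\}$ on $(\Omega_X,\F_X,P_X)$; (ii) the second-moment condition $E_X[|Y_f^l(0)|^2]<\infty$ holds by the standing assumption $E_X[|f^l(X(t_1),\dots,X(t_k))|^2]<\infty$; (iii) ergodicity of $X$ transfers to $Y_f^l$, the latter being generated by the same metrically transitive group, so that the ergodic addendum applies; and (iv) the sequence $\{T_n^l\}$ is unchanged and pointwise averaging, whence the PET and MET hold for $Y_f^l$. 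In the non-ergodic case the non-degeneracy requirement $Var_X[Y_f^l(0)\mid\mathcal{I}_X]>0$ a.s. and the growth condition \eqref{cond17} in the form $k_n^{\frac12}(M^l_{f,n}-\theta^l)\overset{P_X}\Longrightarrow 0$ are exactly what is meant by the phrase ``under the conditions of Corollary \ref{d_gen}'', so they may be assumed rather than proved.

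With these verifications in hand, Corollary \ref{d_gen} applied to the fields $Y_f^l$ (each used $w$ times at the mutually independent randomizing families $\tau^{l,u}_{n,i}$) yields at once the asserted $P_{X,\tau}$-convergence to the independent standard normal variables $Z^{l,u}$ — the independence being inherited directly from the mutual independence of the $d\cdot w$ randomizing families, exactly as in the proof of Corollary \ref{d_gen} — together with the ergodic replacement of $(V^l_{f,n})^{\frac12}$ by $\sigma^l_f$. The only non-formal point one must watch is joint measurability: for $T=\R^m$ one checks that $Y_f^l(t,\omega)$ is $\B\times\F_X$-measurable, which follows since $f^l$ is Borel and $X$ is jointly measurable by assumption, so that Lemma \ref{int} and the randomization construction of \S\ref{randomizing} apply to $Y_f^l$. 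Beyond this bookkeeping there is no new probabilistic content; the entire analytic burden has already been discharged in Theorems \ref{CLT1}--\ref{CLT4} and Corollary \ref{d_gen}, so I expect the only mild obstacle to be confirming that the transcribed hypotheses for $Y_f^l$ are internally consistent rather than any genuine estimate.
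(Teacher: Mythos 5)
Your proposal is correct and coincides with the paper's own proof: the paper verifies homogeneity (via Property \eqref{covariant}), the second-moment bound, and the transfer of ergodicity for the auxiliary fields $Y_f^l$ in the paragraphs preceding the corollary, and then proves the corollary in one line by applying Corollary \ref{d_gen} to the fields $Y_f^l(t)$ — exactly your dictionary $Y_f^l\leftrightarrow X^l$, $\theta^l\leftrightarrow\mu^l$, $M^l_{f,n}\leftrightarrow M^l_n$, $V^l_{f,n}\leftrightarrow V^l_n$. Your additional remarks on joint measurability and on reading the non-degeneracy and growth conditions as part of the hypothesis are sound bookkeeping that the paper leaves implicit.
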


Now let  $\theta:=  F(t_1,...,t_k; x^1,...,x^k):=P(X(t_1)\le x^1,...,X(t_k)\le x^k) $, the $k$-dimensional distribution function of the random field $X$ with  fixed  "time" values $t_1,...,t_k\in \R^m$ and fixed space values $x^1,...,x^k\in\R$. Let %$ f(x_1,...,x_k)=Ind\{ y_1,...,y_k:y_1\le x_1..., y_k \le x^k\}$,
	$Y(t)=Ind\{X (t_1+t)\le x^1..., X (t_k+t)\le x^k\}$.
  Note that
  $$
  E_X[Y(0)]= F(t_1,...,t_k; x^1,...,x^l).
  $$

   Denote:
   $$
   M_{n}:=
  \frac1{\la(T_n)}\int_{T_n}Y(t)\la (dt),\;\;\;\;
    V_{n}:=\linebreak\frac1{\la(T_n)}\int_{T_n}(Y(t)-M_{n})^2\la(dt),
    $$
    $ \textbf{t}:=\{t_1,...,t_k\}, x:=(x_1,...,x_k)$.

 \begin{corollary} Let $X$ be an ergodic     homogeneous scalar random field. Let $\sqrt{k_n}(M_{n}- F(\textbf{t}; x)))\to 0$ in $P_X$-probability.  Consider the  family of independent random variables $\{\tau^{u}_{n,i}, n\in\N,\;\;1\le i\le k_n, 1\le u\le w\}$ which, for each  $n,u,i,$ are distributed uniformly on  $T_n$. Then
 \begin{equation*}\frac{\sum_{i=1}^{k_n}(Ind\{X(t_1+\tau^{u}_{n,i})\le x^1,...,X(t_k+\tau^{u}_{n,i})\le x^k\}- F(\textbf{t}; x))}{k_n^\frac12 V_n ^\frac12} \overset {P_{X,\tau}}\Longrightarrow Z^{u},
	\end{equation*}
	where all random variables $Z^{u},u=1,...,w,$ are standard normal and independent.

The sequence $V_n$ may be replaced by its limit  $F(\textbf{t};x)-(F(\textbf{t};x))^2$.

\end{corollary}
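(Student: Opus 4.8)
The plan is to read this statement as the one-component case ($d=1$, $\theta^1=\theta$) of the preceding Corollary on distributional parameters, specialized to the indicator
$$f(x_1,\dots,x_k):=\mathrm{Ind}\{x_1\le x^1,\dots,x_k\le x^k\}.$$
First I would verify the hypotheses of that Corollary for this choice of $f$. As the indicator of a Borel box, $f$ is $\mathcal{B}(\R^k)$-measurable, and $0\le f\le 1$ gives $E_X[|f(X(t_1),\dots,X(t_k))|^2]\le 1<\infty$; furthermore $\theta=E_X[f(X(t_1),\dots,X(t_k))]=P(X(t_1)\le x^1,\dots,X(t_k)\le x^k)=F(\textbf{t};x)$. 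With this $f$ the auxiliary field $Y_f(t)=f(X(t_1+t),\dots,X(t_k+t))$ is exactly the field $Y(t)$ of the statement, so $M_{f,n}=M_n$ and $V_{f,n}=V_n$. Since $X$ is ergodic, $Y$ is ergodic as well (by Property \eqref{covariant}, as noted before the preceding Corollary). The hypothesis $\sqrt{k_n}(M_n-F(\textbf{t};x))\to 0$ in $P_X$-probability is precisely condition \eqref{cond17} written for the field $Y$.

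All hypotheses being matched, I would invoke the preceding Corollary (with $d=1$) to obtain the asserted convergence to the vector $(Z^1,\dots,Z^w)$ of independent standard normal random variables.

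For the final assertion I would exploit that $Y(t)\in\{0,1\}$, whence $Y(t)^2=Y(t)$ and therefore $V_n=M_n-M_n^2$. Ergodicity of $Y$ together with the PET gives $M_n\to E_X[Y(0)]=F(\textbf{t};x)$ with $P_X$-probability $1$, so $V_n\to F(\textbf{t};x)-(F(\textbf{t};x))^2$. This limit is exactly the theoretical variance $\sigma_f^2=\mathrm{Var}_X[Y(0)]$, so the permitted replacement of $V_n$ is just the "theoretical standard deviation" branch already contained in the preceding Corollary (alternatively, it follows from Slutsky's theorem). The whole argument being a direct specialization, I anticipate no real obstacle; the only computation of note is the identification of the limiting variance as $F-F^2$ through the identity $Y^2=Y$.
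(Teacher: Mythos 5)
Your proof is correct and follows essentially the same route as the paper's own (implicit) argument: the corollary is stated there as a direct specialization of the preceding corollary to the indicator $f=\mathrm{Ind}\{x_1\le x^1,\dots,x_k\le x^k\}$, exactly as you carry out, with your hypothesis $\sqrt{k_n}(M_n-F(\textbf{t};x))\overset{P_X}\to 0$ matching condition \eqref{cond17} for the auxiliary field $Y_f$. Your computation $V_n=M_n-M_n^2\to F(\textbf{t};x)-(F(\textbf{t};x))^2=\sigma_f^2$ via $Y^2=Y$ is precisely the justification behind the paper's final replacement claim.
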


Now we present a randomized version of the CLT when the limit normal distribution is not standard and its covariance matrix coincides with the marginal
covariance matrix of the field X. This statement  is a generalization of   the classical CLT   for i.i.d. random vectors (see Theorem 29.5 in \cite{B}) and can be readily deduced from our Theorem \ref{CLT2} and
\ref{CLT3} (with $d=1$) by using  the Cram$\acute{e}$r - Wold theorem (compare with the proof of the mentioned Theorem 29.5).
\begin{theorem}\label{CLT5} Let the conditions
	of Theorem \ref{CLT1} be fulfilled and, moreover,  let the field $X$ be strict sense stationary and
	ergodic. Let the pointwise averaging sequence $\{T_n\}$ and the randomizing sequence
	$(\tau_{ n,i})$ be the same for all com\-po\-nents $X^l$. Denote by $\Sigma$  the (nonsingular) covariance matrix of the vector $X(0)$
	and let $V$ be a Gaussian vector with mean $0$ and covariance matrix $\Sigma$.

	Then
	\begin{equation*}
		\left( k_n^{-\frac12}\sum_{i=1}^{k_n}(X^1
		(\tau_{ n,i}) - M_n^1),..., k_n^{-\frac12}\sum_{i=1}^{k_n}(X^d
		(\tau_{ n,i}) - M_N^d)\right)\overset {P_{X,\tau}}\Longrightarrow V.
	\end{equation*}
If the condition (\ref{cond17})
 takes place, then also
	\begin{equation*}
	\left( k_n^{-\frac12}\sum_{i=1}^{k_n}(X^1
	(\tau_{ n,i}) - \mu^1),..., k_n^{-\frac12}\sum_{i=1}^{k_n}(X^d
	(\tau_{ n,i}) - \mu^d)\right)\overset {P_{X,\tau}}\Longrightarrow V.
\end{equation*}
\end{theorem}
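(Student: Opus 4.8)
The plan is to reduce the multivariate convergence to the one-dimensional case by the Cram\'er--Wold device and then to apply Theorem \ref{CLT2} (respectively Theorem \ref{CLT4}) with $d=1$ to scalar linear combinations of the components of $X$. The feature distinguishing this statement from Theorems \ref{CLT1}--\ref{CLT4} is that here a single randomizing sequence $(\tau_{n,i})$ is shared by all components; consequently a fixed linear combination of the coordinatewise randomized sums is itself the randomized sum of one scalar field sampled at these \emph{common} points, and the marginal covariance $\Sigma$ of $X(0)$ is retained in the limit instead of being washed out to a diagonal matrix.

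Fix $\theta=(\theta_1,\dots,\theta_d)\in\R^d$ and put
\[
  W_\theta(t):=\sum_{l=1}^d\theta_l X^l(t),\qquad t\in T .
\]
Since $X$ is strict sense stationary and ergodic it is generated by a metrically transitive measure preserving group $\gamma$, and the identity $W_\theta(0,\gamma_t\omega)=\sum_l\theta_l X^l(0,\gamma_t\omega)=\sum_l\theta_l X^l(t,\omega)=W_\theta(t,\omega)$ shows that $W_\theta$ is generated by the same group $\gamma$; hence $W_\theta$ is a homogeneous and ergodic scalar field. From $E_X[|X^l(0)|^2]<\infty$ one gets $E_X[W_\theta(0)^2]<\infty$, and, $\Sigma$ being nonsingular,
\[
  \sigma_\theta^2:=Var_X[W_\theta(0)]=\theta^{\top}\Sigma\,\theta>0\qquad(\theta\neq0).
\]
Thus $W_\theta$ satisfies Assumptions \ref{assump} with $d=1$, the common pointwise averaging sequence $\{T_n\}$ and the common randomizing vectors $(\tau_{n,i})$.

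Writing $M_n^\theta:=\frac1{\la(T_n)}\int_{T_n}W_\theta(t)\la(dt)=\sum_l\theta_l M_n^l$ for the empirical mean of $W_\theta$, linearity gives
\[
  \sum_{l=1}^d\theta_l\,k_n^{-\frac12}\sum_{i=1}^{k_n}\bigl(X^l(\tau_{n,i})-M_n^l\bigr)
  =k_n^{-\frac12}\sum_{i=1}^{k_n}\bigl(W_\theta(\tau_{n,i})-M_n^\theta\bigr).
\]
Applying the ergodic version of Theorem \ref{CLT2} with $d=1$ to the ergodic field $W_\theta$ yields $\frac{\sum_{i=1}^{k_n}(W_\theta(\tau_{n,i})-M_n^\theta)}{k_n^{\frac12}\sigma_\theta}\overset{P_{X,\tau}}{\Longrightarrow}Z^1$ with $Z^1$ standard normal, so after multiplication by the constant $\sigma_\theta$ the left-hand side above converges in $P_{X,\tau}$-distribution to $\sigma_\theta Z^1\sim N(0,\theta^{\top}\Sigma\theta)$. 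Because $V$ is Gaussian with covariance $\Sigma$, the projection $\sum_l\theta_l V^l$ is also $N(0,\theta^{\top}\Sigma\theta)$; hence every one-dimensional projection of the vector in the statement converges to the corresponding projection of $V$, and the Cram\'er--Wold theorem gives the first assertion.

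For the second assertion, condition \eqref{cond17} gives $k_n^{\frac12}(M_n^l-\mu^l)\overset{P_X}{\Longrightarrow}0$ for each $l$, whence $k_n^{\frac12}(M_n^\theta-\sum_l\theta_l\mu^l)\overset{P_X}{\Longrightarrow}0$, so \eqref{cond17} holds for $W_\theta$ as well. Repeating the argument with Theorem \ref{CLT4} in place of Theorem \ref{CLT2} --- equivalently, using $X^l(\tau_{n,i})-\mu^l=(X^l(\tau_{n,i})-M_n^l)+(M_n^l-\mu^l)$ together with Slutsky's theorem, since the correction $k_n^{\frac12}(M_n^l-\mu^l)$ does not depend on $\tau$ and tends to $0$ in $P_X$-probability --- gives convergence of the $\mu$-centered vector to the same $V$. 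The step requiring the most care is the displayed identity collapsing the linear combination of coordinate sums into a single scalar randomized sum: it is precisely the sharing of the points $\tau_{n,i}$ across components that makes this possible and forces the limit to carry the full covariance $\Sigma$; the remaining ingredients (ergodicity and square-integrability of $W_\theta$, the normalization by $\sigma_\theta$, and the Cram\'er--Wold assembly) are routine.
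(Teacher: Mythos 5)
Your proposal is correct and follows exactly the route the paper indicates for this theorem: reduction to the scalar case via the Cram\'er--Wold device and application of the $d=1$ versions of the earlier randomized CLTs to the linear combination $W_\theta=\sum_l\theta_l X^l$, which is homogeneous and ergodic with variance $\theta^{\top}\Sigma\,\theta>0$, the shared points $\tau_{n,i}$ being what collapses the projected sum into a single scalar randomized sum. The only cosmetic divergence is that the paper cites Theorems \ref{CLT2} and \ref{CLT3} while you invoke Theorem \ref{CLT4}; since the second assertion is stated under condition \eqref{cond17} (convergence in $P_X$-probability), your choice matches the statement at least as well, and your fleshed-out details (including the Slutsky alternative for the $\mu$-centered version) are sound.
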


Our theorems and Examples 1 and 2 imply the following statement.
\begin{corollary} 1. Let $T=\R^m$. Then Theorems 1-3  hold   if $\{T_n^l\}$ is
	
	- an increasing  sequence of bounded convex sets, containing  balls of radii $r_n\to\infty$, or
	
	- a sequence of homothetic sets $\{s_n A\}$ considered in Example \ref{homot}.

	2. The same  is true if $T=\Z^m$ and $T_n^l$ are restrictions onto $\Z^m$ of convex sets in $\R^m$    considered above.
\end{corollary}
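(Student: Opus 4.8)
The plan is to observe that the sole structural requirement imposed on the averaging windows $\{T_n^l\}$ in Theorems \ref{CLT1}--\ref{CLT3} is the one recorded in Assumptions \ref{assump}, namely that each $\{T_n^l\}$ be a pointwise averaging sequence (the moment condition \eqref{mom2} and, in the non-ergodic case, the positivity of $Var_X[X^l(0)|\mathcal{I}^l_X]$ are retained unchanged). Consequently the entire corollary reduces to verifying that the two families of sets listed — increasing bounded convex sets containing balls of radii $r_n\to\infty$, and homothetic sets $\{s_nA\}$ — are pointwise averaging, which is exactly the content of Examples \ref{convex} and \ref{homot}.

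First I would check the auxiliary regularity demanded at the start of \S\ref{randomization}: each $T_n^l$ must be a bounded Borel set of positive measure with $\la(T_n^l)\to\infty$. For a convex window containing a ball $B_n$ of radius $r_n$ one has $\la(T_n^l)\ge\la(B_n)=c_m r_n^m\to\infty$ as $r_n\to\infty$; for a homothetic window $s_nA$ with $\la(A)>0$ and $s_n\uparrow\infty$ one has $\la(s_nA)=s_n^m\la(A)\to\infty$. With these verified, Example \ref{convex} (first bullet) and Example \ref{homot} (second bullet) assert precisely that the corresponding sequences are pointwise averaging, so Assumptions \ref{assump} are satisfied and Theorems \ref{CLT1}--\ref{CLT3} apply verbatim, yielding the claimed convergences.

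For Part 2 I would invoke the final sentence of Example \ref{convex}, which states that the intersections with $\Z^m$ of the convex sets considered above again form pointwise averaging sequences in $\Z^m$. Once pointwise averaging is established in the lattice case, the same three theorems apply with no further change.

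I do not expect a genuine obstacle here: all the mathematical substance is already carried by the pointwise ergodic theorems cited in Examples \ref{convex} and \ref{homot}. The only point requiring minor care is to ensure that the geometric hypotheses of those ergodic theorems — monotonicity together with inradius growth in the convex case, and star-shapedness with respect to $0$ together with $s_n\uparrow\infty$ in the homothetic case — are matched by the sequences named in the corollary; but this matching is immediate, since the corollary's hypotheses were chosen precisely to coincide with the settings of the two examples.
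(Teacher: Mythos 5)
Your proposal is correct and matches the paper's own treatment: the paper gives no separate proof, stating only that the corollary follows from Theorems 1--3 together with Examples \ref{convex} and \ref{homot}, which is exactly the reduction you carry out (including the $\Z^m$ case via the last sentence of Example \ref{convex}). Your additional verification that $\la(T_n^l)\to\infty$ in both geometric settings is a harmless and sensible extra check, not a deviation from the paper's argument.
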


\subsection{Rate of convergence}
We remind that $k_n\in\N,\;  k_n\uparrow\infty,\;  \{T_n\}$ is a pointwise averaging sequence of sets and
$$
V_n:=\frac1{\la(T_n)}\int_{T_n}(X(t)-M_n)^2\la(dt).
$$

Let $\mathbf{F}_n$ be the distribution function of the normalaized sum
$$
Z_n = \frac{\sum_{i=1}^{k_n}(X(\tau_{n,i})-\mu)}{k_n^\frac12(V_n)^{\frac12}}
$$
and $\Phi$ be the distribution function of the standard Gaussian law.
We are interested in the estimation of
$$
\Delta_n := \sup_{x\in \R}\Delta_n(x),
$$
where
$$
\Delta_n(x) = |\mathbf{F}_n(x) - \Phi(x)|.
$$
\begin{theorem}\label{rate0}
	Suppose that $\{X(t),\, t\in T\},\;X(t)\in \R,$ is a stationary ergodic field such that
	for some $\delta,\;0<\delta\leq 1,\;  \E|X(t)|^{2+\delta}< \infty.$
	
	There exist a constant $C$ (depending only of $\delta$) such that for all $\var >0$ and every
	$n$
	\begin{equation}\label{rate1}
		\Delta_n \leq P\{V_n < \var \}+ C
		k_n^{-\frac{\delta}{2}} \var^{-\frac{2+\delta}{2}}\E|X(0)|^{2+\delta}.
	\end{equation}	
\end{theorem}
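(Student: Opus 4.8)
The plan is to condition on the field $X$ and reduce the assertion to a Berry--Esseen estimate for a triangular array that, \emph{given} $\omega\in\Omega_X$, consists of i.i.d.\ summands. Fix $n$. For $P_X$-almost every $\omega$ the variables $X(\tau_{n,1},\omega),\dots,X(\tau_{n,k_n},\omega)$ are, with respect to $P_\tau$, independent and identically distributed with mean $M_n(\omega)$ and variance $V_n(\omega)$ (this is exactly the computation recorded in \S\ref{randomization}), so $Z_n$ is, conditionally on $\omega$, the sum of these summands standardized by $M_n(\omega)$ and $V_n(\omega)^{1/2}$. Writing $H_n^\omega(x):=P_\tau(Z_n\le x)$ for the conditional distribution function and using the Fubini-type argument in the proof of Lemma \ref{p}, one has $\mathbf F_n(x)=E_X[H_n^\omega(x)]$, whence
\[
\Delta_n=\sup_x\bigl|E_X[H_n^\omega(x)-\Phi(x)]\bigr|\le E_X\Bigl[\sup_x\,|H_n^\omega(x)-\Phi(x)|\Bigr]=:E_X[\Delta_n(\omega)].
\]

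First I would split according to whether the empirical variance is small. On the event $\{V_n<\var\}$ I use only the crude bound $\Delta_n(\omega)\le 1$ (a difference of two distribution functions), whose contribution to $E_X[\Delta_n(\omega)]$ is at most $P\{V_n<\var\}$ --- precisely the first term on the right of \eqref{rate1}. On the complementary event $\{V_n\ge\var\}$ the conditional variance is bounded away from $0$, and for such $\omega$ I invoke the fractional-moment (Katz--Petrov) form of the Berry--Esseen theorem: for i.i.d.\ variables with variance $V_n(\omega)$ and finite absolute $(2+\delta)$-th moment, $0<\delta\le1$,
\[
\Delta_n(\omega)\le C\,k_n^{-\frac{\delta}{2}}\,V_n(\omega)^{-\frac{2+\delta}{2}}\,E_\tau\bigl[\,|X(\tau_{n,1},\omega)-M_n(\omega)|^{2+\delta}\,\bigr],
\]
with a constant $C$ depending only on $\delta$.

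It then remains to do the moment bookkeeping. On $\{V_n\ge\var\}$ I replace $V_n(\omega)^{-(2+\delta)/2}$ by $\var^{-(2+\delta)/2}$, and pass from the central empirical moment to the raw one via $E_\tau|X(\tau_{n,1})-M_n|^{2+\delta}\le 2^{3+\delta}E_\tau|X(\tau_{n,1})|^{2+\delta}$ (Jensen gives $|M_n|^{2+\delta}\le E_\tau|X(\tau_{n,1})|^{2+\delta}$). Taking the outer expectation $E_X$ and applying \eqref{Et} with $f(x)=|x|^{2+\delta}$, the quantity $E_X[E_\tau|X(\tau_{n,1})|^{2+\delta}]$ becomes $\frac1{\la(T_n)}\int_{T_n}E_X|X(t)|^{2+\delta}\,\la(dt)$; by homogeneity the integrand is constant and equal to $\E|X(0)|^{2+\delta}$, so the spatial average collapses to $\E|X(0)|^{2+\delta}$. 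Absorbing $2^{3+\delta}$ into $C$ yields the second term of \eqref{rate1} and completes the argument.

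The step I expect to require the most care is securing the \emph{uniform} Berry--Esseen bound in this precise fractional-moment form with a constant depending only on $\delta$ (rather than the classical third-moment, $\delta=1$, version), since $M_n(\omega)$, $V_n(\omega)$ and the empirical $(2+\delta)$-th moment all vary with $\omega$ and the inequality must hold simultaneously for every good $\omega$. The measurability needed to pass from the conditional estimate back to $\mathbf F_n$ is routine and is furnished by the Fubini-type Lemma \ref{p}.
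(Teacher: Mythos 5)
Your proposal is correct and takes essentially the same route as the paper's own proof: both condition on the field to obtain a conditionally i.i.d.\ array, apply the fractional-moment (Katz--Petrov) Berry--Esseen bound of Petrov with constant depending only on $\delta$, split the outer expectation on $\{V_n<\var\}$ (using the trivial bound $1$ there and $V_n^{-(2+\delta)/2}\le\var^{-(2+\delta)/2}$ on the complement), and collapse the spatial $(2+\delta)$-moment average to $\E|X(0)|^{2+\delta}$ by homogeneity. The only cosmetic differences are that you take $\sup_x$ inside the outer expectation and apply Jensen at the conditional level, whereas the paper bounds $E|M_n|^{2+\delta}$ after taking $E_X$; neither affects the argument.
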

\begin{remark}
Below, in Lemma \ref{L1}, we present several sufficient conditions for estimating the probability $P\{V_n < \var\}.$ Their proof is easily deduced from
Theorem   4.4 in \cite{Tem}.
	\begin{lemma}\label{L1}Let $\{T_n\}$ be a sequence of Borel sets in $T$ with the following property: each $T_n$ is contained in a ball $B(b_n,Cn)$ and $\la(T_n)\ge cn^m\  (b_n\in T, 0<C<\infty, 0<c<C)$. Let $\var<\sigma^2$. We assume that $\mu_4=E[X^4]<\infty$. Suppose the covariance functions:
		$$
		R_{X}(t):=E[ X(t)X(0)] -\mu^2\;\;\; {\mathrm and}\;\;\;R_{X^2}(t):=E[(X(t)X(0))^2]-\mu_2^2
		$$
satisfy the conditions: for some $\beta>0$		
		 $$  R_X(t)=O(|t|^{- \beta }), \;\;\;\; R_{X^2}(t)=O(|t|^{-\frac\beta2}), \;\;\;
		  t\to\infty. $$
		
		Case 1. If $\beta\le m,$
		then  $ P(V_n<\var)\le(\sigma^2-\var)^{-2}O(n^{-\frac \beta2})$.
		
		Case 2. If $\beta=m,$ then  $ P(V_n<\var)\le(\sigma^2-\var)^{-2}O(n^{-\frac m2}(\log n)^\frac 12))$.
		
		Case 3. If $ \beta>m,$ then  $ P(V_n<\var)\le(\sigma^2-\var)^{-2} O(n^{-\frac m2})$.
	\end{lemma}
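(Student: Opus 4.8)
The plan is to reduce the one-sided deviation $\{V_n<\var\}$ to Chebyshev/Markov estimates for two empirical averages — that of the field $X$ and that of the field $X^2$ — and then to read off the rates from the covariance decay and the geometry of $T_n$, exactly as in Theorem 4.4 of \cite{Tem}. Write $S_n:=\frac1{\la(T_n)}\int_{T_n}X^2(t)\la(dt)$, so that $V_n=S_n-M_n^2$, and recall $\sigma^2=\mu_2-\mu^2$ with $\mu_2:=E[X(0)^2]$, $\mu:=E[X(0)]$. By homogeneity $E[S_n]=\mu_2$ and $E[M_n]=\mu$. The starting point is the algebraic identity
$$\sigma^2-V_n=(\mu_2-S_n)+(M_n^2-\mu^2).$$
Since $\var<\sigma^2$, on $\{V_n<\var\}$ the left-hand side exceeds $\sigma^2-\var>0$, so at least one summand exceeds $(\sigma^2-\var)/2$; this gives
$$\{V_n<\var\}\subseteq\Big\{|S_n-\mu_2|>\tfrac{\sigma^2-\var}{2}\Big\}\cup\Big\{|M_n^2-\mu^2|>\tfrac{\sigma^2-\var}{2}\Big\}.$$

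For the first event I would apply Chebyshev's inequality, obtaining $4(\sigma^2-\var)^{-2}\,Var(S_n)$, where $Var(S_n)$ is the variance of the empirical mean of the homogeneous field $X^2$, whose centred covariance function is exactly $R_{X^2}$ (note $(X(t)X(0))^2=X(t)^2X(0)^2$, and $\mu_4<\infty$ makes this variance finite). For the second event the decisive choice is to use a first-moment (Markov) bound rather than Chebyshev: writing $M_n^2-\mu^2=(M_n-\mu)(M_n+\mu)$ and applying Cauchy--Schwarz,
$$E|M_n^2-\mu^2|\le\big(Var(M_n)\big)^{1/2}\big(E[(M_n+\mu)^2]\big)^{1/2}=\big(Var(M_n)\big)^{1/2}\,O(1),$$
since $E[(M_n+\mu)^2]=Var(M_n)+4\mu^2$ stays bounded. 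Hence the probability of the second event is at most $2(\sigma^2-\var)^{-1}O\big((Var(M_n))^{1/2}\big)$. Bounding $(\sigma^2-\var)^{-1}=(\sigma^2-\var)(\sigma^2-\var)^{-2}\le\sigma^2(\sigma^2-\var)^{-2}$ and absorbing $\sigma^2$ into the constant, I arrive at
$$P(V_n<\var)\le(\sigma^2-\var)^{-2}\Big(4\,Var(S_n)+O\big((Var(M_n))^{1/2}\big)\Big).$$

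It remains to estimate $Var(M_n)$ and $Var(S_n)$, which is precisely the content of Theorem 4.4 in \cite{Tem}. For a homogeneous field $Y$ with covariance $R_Y$ one has $Var\big(\frac1{\la(T_n)}\int_{T_n}Y\,\la(dt)\big)\le\frac1{\la(T_n)}\int_{|u|\le2Cn}|R_Y(u)|\la(du)$, because $t-s$ ranges in a ball of radius $2Cn$ when $T_n\subset B(b_n,Cn)$, while $\la(T_n)\ge cn^m$. Splitting this integral into a bounded near-origin part and a tail governed by the decay exponent yields, for an exponent $\gamma$, the regimes $O(n^{-\gamma})$ if $\gamma<m$, $O(n^{-m}\log n)$ if $\gamma=m$, and $O(n^{-m})$ if $\gamma>m$. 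Taking $\gamma=\beta$ for $Var(M_n)$ and $\gamma=\beta/2$ for $Var(S_n)$ and comparing, the term $(Var(M_n))^{1/2}$ dominates (it merely ties with $Var(S_n)$ when $\beta<m$), which reproduces the three stated cases $O(n^{-\beta/2})$, $O(n^{-m/2}(\log n)^{1/2})$ and $O(n^{-m/2})$, respectively.

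The main obstacle — and the key design choice — is the treatment of the $M_n^2-\mu^2$ fluctuation: handling it by a second-moment (Chebyshev) estimate would require controlling $E[(M_n-\mu)^4]$ and would alter (in fact strengthen) the rate, whereas the first-moment bound combined with Cauchy--Schwarz is exactly what produces the factor $(Var(M_n))^{1/2}$ and hence the logarithmic correction in the critical case $\beta=m$. The remaining points are routine: the $O(1)$ factor coming from $E[(M_n+\mu)^2]$ is bounded because $M_n\to\mu$ in $L^2$ (the Mean Ergodic Theorem of Remark \ref{MET}), and the covariance-integral estimate applies verbatim on $\Z^m$ with sums in place of integrals.
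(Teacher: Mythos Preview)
Your proof is correct and follows the same route as the paper, which does not spell out the argument but simply says the lemma ``is easily deduced from Theorem~4.4 in \cite{Tem}.'' Your decomposition $\sigma^2-V_n=(\mu_2-S_n)+(M_n^2-\mu^2)$ together with the Chebyshev bound on $S_n-\mu_2$, the Markov/Cauchy--Schwarz bound on $M_n^2-\mu^2$ (which is exactly what produces the $(Var(M_n))^{1/2}$ and hence the $(\log n)^{1/2}$ in the critical case), and the invocation of Theorem~4.4 of \cite{Tem} for the variance estimates, constitutes a correct and faithful expansion of that one-line reference.
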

If we take  $\varepsilon = \sigma^2/2,$ we get
from (\ref{rate1}) the following corollary.
\begin{corollary} Under the conditions of Theorem \ref{rate0} and Lemma \ref{L1} we have:
	{\;}\\
		In Case 1:
$
\Delta_n = O(\max\{n^{-\frac \beta2}, k_n^{-\frac \delta2}\}).
$	

\noindent
	In Case 2:
$
\Delta_n = O(\max\{n^{-\frac m2}(\log n)^\frac 12,
	k_n^{-\frac \delta2}\}).
$	

\noindent
	In Case 3:
$
\Delta_n = O(\max\{n^{-\frac m2}, k_n^{-\frac \delta2}\}).
$	

\end{corollary}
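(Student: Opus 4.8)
The plan is to substitute the single choice $\varepsilon = \sigma^2/2$ into the general bound \eqref{rate1} of Theorem \ref{rate0}, and then estimate the two resulting summands separately---the first through Lemma \ref{L1}, the second directly. First I would note that since the field is ergodic with $\sigma^2 = Var[X(0)] > 0$, the value $\varepsilon = \sigma^2/2$ satisfies $0 < \varepsilon < \sigma^2$, so that both Theorem \ref{rate0} (valid for every $\varepsilon > 0$) and Lemma \ref{L1} (valid for $\varepsilon < \sigma^2$) are available for this choice. With $\varepsilon$ so fixed, \eqref{rate1} reads
\[
\Delta_n \le P\{V_n < \tfrac{\sigma^2}{2}\} + C\, k_n^{-\delta/2}\,\bigl(\tfrac{\sigma^2}{2}\bigr)^{-(2+\delta)/2}\,\E|X(0)|^{2+\delta}.
\]

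For the second summand I would simply observe that the three factors $C$, $(\sigma^2/2)^{-(2+\delta)/2}$ and $\E|X(0)|^{2+\delta}$ are finite constants independent of $n$: by Theorem \ref{rate0} the constant $C$ depends only on $\delta$, the variance $\sigma^2$ is fixed, and $\E|X(0)|^{2+\delta} < \infty$ is part of the hypotheses. Hence this summand is $O(k_n^{-\delta/2})$, the same contribution in all three cases.

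For the first summand I would feed $\varepsilon = \sigma^2/2$ into the estimates of Lemma \ref{L1}. The prefactor occurring there, $(\sigma^2 - \varepsilon)^{-2} = (\sigma^2/2)^{-2} = 4\sigma^{-4}$, is once more a constant, so the Lemma yields $P\{V_n < \sigma^2/2\} = O(n^{-\beta/2})$ in Case 1, $O(n^{-m/2}(\log n)^{1/2})$ in Case 2, and $O(n^{-m/2})$ in Case 3. Combining the two summands and invoking the elementary fact that $O(a_n) + O(b_n) = O(\max\{a_n, b_n\})$ (because $a_n + b_n \le 2\max\{a_n, b_n\}$) then produces exactly the three stated rates. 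There is no genuine obstacle in this argument; the one point that requires attention is to verify that every multiplicative constant generated by the two results---notably $(\sigma^2 - \varepsilon)^{-2}$ and $(\sigma^2/2)^{-(2+\delta)/2}$---remains bounded in $n$, which is precisely the reason $\varepsilon$ is taken proportional to $\sigma^2$ rather than allowed to decay with $n$.
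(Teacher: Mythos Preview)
Your proposal is correct and follows exactly the paper's approach: the paper simply states that taking $\varepsilon = \sigma^2/2$ in \eqref{rate1} yields the corollary, and your write-up supplies the straightforward details of that substitution.
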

\end{remark}
Now let's move on to the proof
 of the theorem \ref{rate0}.
\begin{proof}
	Since for fixed path of the process
	$X$   the random variables
	$\{X(\tau_{n,i}) \},$\\
	$i=1,\ldots,n,$	 are independent and identically distributed, then denoting
	by  $\mathbf{F}_n^\tau$    the function of conditional distribution
	$Z_n,$  we can apply the estimation
	(\cite{P}, Ch.5, 3, Th. 6 ):
	
	There exists an absolute constant  $C$   such that
	$$
	\Delta_n^\tau(x): = |\mathbf{F}_n^\tau(x) - \Phi(x)|\leq
	\frac{C}{V_n^{(2+\delta)/2} k_n^{\delta/2}}
	E_\tau\{|X(\tau_{n,1})-M_n|^{2+\delta}\}.
	$$	
	Therefore
	\begin{equation}\label{rate}
		\Delta_n(x)=|\mathbf{F}_n(x) - \Phi(x)| = |E(\mathbf{F}_n^\tau(x) - \Phi(x))|\leq
		E|\Delta_n^\tau(x)| := I_1+I_2,	
	\end{equation}
	where
	$$
	I_1 = \int_{\{V_n<\var\}}|\Delta_n^\tau(x)|dP,\;\;\;
	I_2 = \int_{\{V_n\geq\var\}}|\Delta_n^\tau(x)|dP.
	$$
	Evidentely
	$$
	I_1 \leq P\{V_n < \var\}.
	$$
	Consider $I_2.$ As
	$$
	E_\tau\{|X(\tau_{n,1})-M_n|^{2+\delta}\}= \frac{1}{\la(T_n)}\int_{T_n}|X(t)-M_n|^{2+\delta}\la(dt)\leq
	$$
	$$
	\hspace{100pt}2^{1+\delta}\left\{\frac{1}{\la(T_n)}\int_{T_n}
	|X(t)|^{2+\delta}\la(dt)+ |M_n|^{2+\delta}\right\},
	$$
	we have
	$$
	I_2 \leq \frac{C}{k_n^{\delta/2}}\var^{-\frac{2+\delta}{2}}  2^{1+\delta}\left[E |X(0)|^{2+\delta}+ E |M_n|^{2+\delta}\right].
	$$
	Since $E |M_n|^{2+\delta} \leq \E |X(0)|^{2+\delta},$
	we get the estimation
	$$
	I_2 \leq C \frac{ 2^{2+\delta}}{k_n^{\delta/2}}\var^{-\frac{2+\delta}{2}}  E |X(0)|^{2+\delta},
	$$
	which gives finally (\ref{rate1}).
\end{proof}

\subsection{Invariance principle}\label{IP}
The next natural step is to state the functional central limit theorem (invariance principle).

Let $X(t,\omega),t\in \R^m$, be a    bi-measurable random field  over a probability space  $(\Omega_X,\F_X,P_X).$
Let $\{T_n\}$ be a sequence of increasing convex sets in $\R^m$ containing balls with radii $r_n\to\infty.$
Let, for each $n\in\N$,\\ $\tau_{n,i},\;i=1,...,k_n,$ be random variables over a probability space $(\Omega_\tau,\F_\tau,P\tau)$, each being uniformly distributed on $T_n$ and independent of each other and of $X$.
Let  $(\Omega_{X,\tau},\F_{X,\tau},P_{X,\tau}):=(\Omega_X\times \Omega_\tau,\F_X\times \F_\tau,P_X\times P_\tau).$

Using random variables $X(\tau_{n,i})$ , we construct in usual way the continuous piecewise lineal random process $Z_n= \{Z_n(t),\;t\in [0,1]\}.$ The process  $Z_n$ has the vertices at the points
\begin{equation}
	\left(\frac{j}{k_n},\;\;\frac{S_j}{k_n^{1/2}V_n^{1/2}} \right),\;\;\;j=0,1,\ldots,k_n,
\end{equation}
where
$$
S_j = \sum_{r=1}^j (X(\tau_{n,r})-M_n),\;\;\; V_n = Var_\tau(X(\tau_{n,r})).
$$
Denote by ${\cal P}_n$ the distribution of the process  $Z_n$ in the space $\C[0,1]$ and by $W$ the distribution of standard Wiener process.
\begin{theorem}\label{pi}
	Under mentioned conditions:
	
	a) For $P_X$-almost all $\omega$
	\begin{equation}\label{pi1}
		Z_n(\omega,\cdot) \overset {P_\tau}\Longrightarrow W.
		\end{equation}
	b) The convergence
\begin{equation}\label{pi2}
Z_n \overset {P_{X,\tau}} \Longrightarrow W.
\end{equation}
also takes place.
\end{theorem}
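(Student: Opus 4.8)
The plan is to prove part (a) first, for a fixed generic sample path, by reducing the statement to the classical functional central limit theorem (invariance principle) for triangular arrays of row-wise independent summands, and then to deduce the unconditional statement (b) from (a) by a Fubini and dominated-convergence argument of exactly the same type as Lemma \ref{p}.

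For (a), I would fix $\omega$ in the set $\Lambda\in\F_X$ of full $P_X$-measure on which three facts hold simultaneously: by the PET one has $M_n(\omega)\to z(\omega)$ and $V_n(\omega)\to v(\omega)>0$, and by Proposition \ref{lindeberg} the randomized Lindeberg fraction satisfies $L_n(\omega)\to 0$. (Such a $\Lambda$ exists because each sequence $\{T_n\}$ of increasing convex sets containing balls of radii $r_n\to\infty$ is pointwise averaging, by Example \ref{convex}.) For this fixed $\omega$ the variables $X(\tau_{n,1},\omega),\dots,X(\tau_{n,k_n},\omega)$ are i.i.d. over $(\Omega_\tau,\F_\tau,P_\tau)$, so the normalized summands $\eta_{n,i}:=\big(X(\tau_{n,i},\omega)-M_n(\omega)\big)\big/\big(k_n^{1/2}V_n(\omega)^{1/2}\big)$ form, for each $n$, an independent family with $E_\tau[\eta_{n,i}]=0$ and $\sum_{i=1}^{k_n}Var_\tau(\eta_{n,i})=1$; moreover the Lindeberg condition $\sum_i E_\tau[\eta_{n,i}^2\,\1_{\{|\eta_{n,i}|>\var\}}]\to 0$ is, after unwinding the normalization, precisely $L_n(\omega)\to 0$. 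The process $Z_n(\omega,\cdot)$ is the polygonal partial-sum process attached to $\{\eta_{n,i}\}$. I would then invoke the standard invariance principle for such triangular arrays: convergence of the finite-dimensional distributions to those of $W$ follows from the Lindeberg CLT (the very tool used in Theorem \ref{CLT1}) applied to the increments of $Z_n(\omega,\cdot)$ over disjoint subintervals of $[0,1]$ — these increments involve disjoint, hence independent, blocks of the $\eta_{n,i}$, so the limiting increments are independent Gaussians with the correct variances — while tightness in $\C[0,1]$ follows from the classical maximal-inequality argument for sums of independent variables under the Lindeberg condition (see, e.g., Billingsley, \emph{Convergence of Probability Measures}). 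Together these give $Z_n(\omega,\cdot)\overset{P_\tau}\Longrightarrow W$, which is \eqref{pi1}.

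For (b), I would lift \eqref{pi1} to the product space exactly as in Lemma \ref{p}. Fix a bounded continuous functional $g\colon\C[0,1]\to\R$. By the bi-measurability of $X$ and the construction of $Z_n$, the map $\omega\mapsto E_\tau[g(Z_n(\omega,\cdot))]$ is $\F_X$-measurable, so Fubini gives $E_{X,\tau}[g(Z_n)]=E_X\big[E_\tau[g(Z_n(\omega,\cdot))]\big]$. By part (a) the inner expectation converges to $E[g(W)]$ for $P_X$-almost every $\omega$, and it is bounded in absolute value by $\|g\|_\infty$; hence the dominated convergence theorem yields $E_{X,\tau}[g(Z_n)]\to E[g(W)]$. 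Since $g$ was an arbitrary bounded continuous functional, the portmanteau theorem gives \eqref{pi2}.

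The main obstacle is the tightness half of part (a): finite-dimensional convergence is immediate from the machinery already built in the preceding sections, whereas tightness is the genuinely functional ingredient and must be supplied from the triangular-array invariance principle rather than from the scalar results used so far. A secondary technical point, needed to make the Fubini step in (b) rigorous, is the joint measurability of $(\omega,\cdot)\mapsto Z_n$ and hence of $\omega\mapsto E_\tau[g(Z_n(\omega,\cdot))]$, which rests on the assumed bi-measurability of the field $X$.
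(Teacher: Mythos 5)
Your proposal is correct and follows essentially the same route as the paper: the paper likewise works conditionally on a typical $\omega$, observes that the triangular array $\{X(\tau_{n,i},\omega)\}$ satisfies the Lindeberg condition by Proposition \ref{lindeberg}, and invokes Prokhorov's invariance principle for triangular arrays (which packages the finite-dimensional convergence and tightness you sketch by hand), then deduces \eqref{pi2} from \eqref{pi1} because the limit law $W$ does not depend on $\omega$ --- precisely your Fubini/dominated-convergence lift in the spirit of Lemma \ref{p}. The only difference is presentational: where you outline the FDD-plus-maximal-inequality argument, the paper simply cites Prokhorov's Theorem 3.1.
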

\begin{remark}
	The importance of weak convergence in (\ref{pi1}, \ref{pi2}) comes from the fact that by
	continuous mapping theorem (see e.g., \cite{B}) many corollaries emerge
	immediately, namely for each $W$-a.e. continuous mapping
	$f:\C \to \R$ we have the convergence $f(Z_n) \Longrightarrow f(W)$. Tipical examples of such functionals are
	$f(x)=\sup_{t\in [0,1]}x(t),\;\; f(x)=\sup_{t\in [0,1]}|x(t)|$,
	f(x) = $\int_{[0,1]}h(t)dt,$ and so on...
\end{remark}

\begin{proof}
As for triangular array $\{X(\tau_{n,i})\}$	the Lindeberg condition is fulfilled, by Prokhorov's theorem
(see \cite{Pr}, Th.3.1.), we get (\ref{pi1}).

The relation (\ref{pi2}) follows from (\ref{pi1}) since the limiting measure does not depend of $\omega.$
\end{proof}
Let us consider now the random broken lines $\tilde{Z}_n$ with a different norma\-li\-zation.
More exactly, let
$$
\tilde{Z}_n\left(\frac{j}{k_n}\right)= \frac{S_j}{k_n^{1/2}\sigma^{1/2}},
$$
where $\sigma = Var\{X(0)\}.$

\begin{theorem}\label{pi3}
	Suppose that the field $X(t)$ is ergodic. Then:
	
	a) For $P_X$-almost all $\omega$
	\begin{equation}\label{pi41}
		\tilde{Z}_n(\omega,\cdot) \overset {P_{\tau}} \Longrightarrow W.
	\end{equation}
	b) The convergence
	\begin{equation}\label{pi42}
			\tilde{Z}_n \overset {P_{X,\tau}}  \Longrightarrow W.
	\end{equation}
	also takes place.
\end{theorem}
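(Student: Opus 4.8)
Theorem \ref{pi3} asserts the invariance principle with the *deterministic* normalization $\sigma = \mathrm{Var}\{X(0)\}$ in place of the *empirical* normalization $V_n$ used in Theorem \ref{pi}. The natural plan is therefore to reduce Theorem \ref{pi3} to the already-established Theorem \ref{pi} by controlling the ratio $V_n/\sigma$, exactly as Theorem \ref{CLT1} passed from the $V_n$-normalized sums to the $\sigma$-normalized sums via Slutsky's theorem in the finite-dimensional setting. The key observation is that the two broken lines are related by a pure scalar rescaling:
\begin{equation*}
\tilde{Z}_n(\omega,\cdot) = \left(\frac{V_n(\omega)}{\sigma}\right)^{1/2} Z_n(\omega,\cdot).
\end{equation*}
Since the field $X$ is assumed ergodic and $\{T_n\}$ is a pointwise averaging sequence (convex sets containing balls of radii $r_n\to\infty$, hence pointwise averaging by Example \ref{convex}), the PET gives $V_n(\omega)\to\sigma$ for $P_X$-almost all $\omega$, so the scalar factor $(V_n(\omega)/\sigma)^{1/2}\to 1$ almost surely.

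The first step is to establish part (a). Fix a ``good'' $\omega$ in the full-measure set where both $V_n(\omega)\to\sigma$ and the conclusion \eqref{pi1} of Theorem \ref{pi} hold. On this set I would invoke the continuous mapping theorem: multiplication by a deterministic scalar $c_n(\omega):=(V_n(\omega)/\sigma)^{1/2}$ is a continuous operation on $\mathcal{C}[0,1]$, and since $c_n(\omega)\to 1$, the map $x\mapsto c_n(\omega)\,x$ converges uniformly to the identity on compact sets. Combining the weak convergence $Z_n(\omega,\cdot)\overset{P_\tau}{\Longrightarrow}W$ from \eqref{pi1} with $c_n(\omega)\to 1$ yields, by a standard Slutsky-type argument for weak convergence in $\mathcal{C}[0,1]$ (see, e.g., the converging-together lemma in \cite{B}),
\begin{equation*}
\tilde{Z}_n(\omega,\cdot) = c_n(\omega)\,Z_n(\omega,\cdot)\overset{P_\tau}{\Longrightarrow}W,
\end{equation*}
which is precisely \eqref{pi41}.

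For part (b), I would argue exactly as in the proof of Theorem \ref{pi}: the limiting measure $W$ in \eqref{pi41} does not depend on $\omega$, so the conditional weak convergence holding for $P_X$-almost every $\omega$ upgrades to the unconditional weak convergence \eqref{pi42} under $P_{X,\tau}$. This is the functional analogue of Lemma \ref{p}, part 2, applied on the path space $\mathcal{C}[0,1]$; concretely, one tests against bounded continuous functionals $\Phi:\mathcal{C}[0,1]\to\R$ and applies the bounded convergence theorem to $E_X[E_\tau[\Phi(\tilde{Z}_n)]]$, using that $E_\tau[\Phi(\tilde{Z}_n)]\to \int \Phi\,dW$ for $P_X$-almost all $\omega$.

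The main obstacle, though a mild one, is making the converging-together step rigorous in the function-space setting rather than in $\R^d$: one must verify that the joint convergence of the random broken line $Z_n(\omega,\cdot)$ and the deterministic factor $c_n(\omega)$ implies convergence of their product in $\mathcal{C}[0,1]$. Because $c_n(\omega)$ is a nonrandom scalar tending to $1$, this follows cleanly from the continuous mapping theorem once one notes that $(x,c)\mapsto cx$ is jointly continuous on $\mathcal{C}[0,1]\times\R$; there is no tightness issue to re-prove, since tightness of $\{\tilde{Z}_n(\omega,\cdot)\}$ is inherited from that of $\{Z_n(\omega,\cdot)\}$ (already guaranteed by the Lindeberg condition of Proposition \ref{lindeberg} and Prokhorov's theorem as in Theorem \ref{pi}) together with the boundedness of $c_n(\omega)$. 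Ergodicity enters only to guarantee $V_n(\omega)\to\sigma$ rather than to a nondegenerate random limit, which is what makes the clean replacement of $V_n$ by $\sigma$ possible.
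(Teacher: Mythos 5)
Your proposal is correct and follows essentially the same route as the paper: the paper's own (one-line) proof rests on exactly the relation $\tilde{Z}_n(t)=\left(\frac{V_n}{\sigma}\right)^{1/2}Z_n(t)$ together with $\frac{V_n}{\sigma}\to 1$ a.s.\ by the ergodic theorem, with the Slutsky-type step in $\C[0,1]$ and the upgrade from conditional to unconditional convergence left implicit. You have merely spelled out those implicit steps (continuous mapping for the scalar rescaling, and the Lemma \ref{p}-style argument for part (b)), which is a faithful elaboration rather than a different approach.
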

\begin{proof}
	The proof follows from the relation  $\tilde{Z}_n(t)= (\frac{V_n}{\sigma})^{1/2}Z_n(t)$ and the fact that
	due to the ergodic theorem $\frac{V_n}{\sigma}\rightarrow 1$ a.s.
\end{proof}

With a little additional restriction, we can replace $M_n$ by $\mu$ which is the mean value of our field.

 Define  the process $\tilde{\tilde{Z}}_n$ as a broken line with vertices
 at the points
 $$
 \left(\frac{j}{k_n},\; \frac{S_j}{k_n^{1/2}\sigma^{1/2}}\right),
 $$
where  now $S_j =\sum_{r=1}^j (X(\tau_{n,r}-\mu)).$
\begin{theorem}\label{pi4}
	Suppose that the field $X(t)$ is ergodic.
Additionally suppose that a.s.\footnote{See \S\ref{remarks} for remarks on this condition.}
$$
k_n^{1/2}(M_n - \mu) \rightarrow 0.
$$	
	Then:
	
	a) For $P_X$-almost all $\omega$
	\begin{equation}\label{pi43}
		\tilde{\tilde{Z}}_n(\omega,\cdot) \overset {P_{\tau}} \Longrightarrow W.
	\end{equation}
	b) The convergence
	\begin{equation}\label{pi44}
		\tilde{\tilde{Z}}_n \overset {P_{X,\tau}}  \Longrightarrow W.
	\end{equation}
	also takes place.
\end{theorem}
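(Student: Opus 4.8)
The plan is to deduce this from Theorem \ref{pi3} by a converging-together (Slutsky-type) argument, since $\tilde{\tilde{Z}}_n$ differs from the broken line $\tilde{Z}_n$ of Theorem \ref{pi3} only in that the summands are centered at the true mean $\mu$ rather than at the empirical mean $M_n$. First I would write $\tilde S_j:=\sum_{r=1}^{j}(X(\tau_{n,r})-\mu)=S_j+j(M_n-\mu)$, where $S_j=\sum_{r=1}^{j}(X(\tau_{n,r})-M_n)$ is the sum used in $\tilde Z_n$. Dividing by $k_n^{1/2}\sigma^{1/2}$, the vertices satisfy $\tilde{\tilde Z}_n(j/k_n)-\tilde Z_n(j/k_n)=\frac{j(M_n-\mu)}{k_n^{1/2}\sigma^{1/2}}$. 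Since both processes are broken lines with the same breakpoints $j/k_n$, their difference $D_n:=\tilde{\tilde Z}_n-\tilde Z_n$ is itself a broken line with these breakpoints, and it depends on $\omega\in\Omega_X$ only through $M_n$ (it does not involve $\tau$).

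Next I would bound $D_n$ in the sup-norm on $\C[0,1]$. A piecewise linear function attains its extrema at its breakpoints, so
$$\|D_n\|_\infty=\max_{0\le j\le k_n}\frac{j\,|M_n-\mu|}{k_n^{1/2}\sigma^{1/2}}=\frac{k_n^{1/2}|M_n-\mu|}{\sigma^{1/2}},$$
the maximum being attained at $j=k_n$; here it is essential that the time parameter ranges over the bounded interval $[0,1]$, so the accumulated drift stays uniformly controlled. By the additional hypothesis $k_n^{1/2}(M_n-\mu)\to 0$ a.s., I conclude $\|D_n\|_\infty\to 0$ for $P_X$-almost all $\omega$.

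Finally, fix a ``good'' $\omega$ for which both the conclusion of Theorem \ref{pi3}(a) holds and $\|D_n(\omega)\|_\infty\to 0$. By Theorem \ref{pi3}(a) we have $\tilde Z_n(\omega,\cdot)\overset{P_\tau}{\Longrightarrow}W$, while $D_n(\omega,\cdot)\to 0$ in $\C[0,1]$ deterministically, hence a fortiori in $P_\tau$-probability. The converging-together lemma (the $\C[0,1]$ form of Slutsky's theorem) then yields $\tilde{\tilde Z}_n(\omega,\cdot)=\tilde Z_n(\omega,\cdot)+D_n(\omega,\cdot)\overset{P_\tau}{\Longrightarrow}W$, which is (a). Part (b) follows exactly as in the proofs of Theorems \ref{pi} and \ref{pi3}: the limit $W$ does not depend on $\omega$, so the $P_\tau$-weak convergence valid for $P_X$-almost all $\omega$ upgrades to $P_{X,\tau}$-weak convergence (the $\C[0,1]$-valued analogue of Lemma \ref{p}). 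I do not anticipate any serious obstacle, as the heavy lifting --- tightness via the Lindeberg condition and identification of the Wiener limit --- was already carried out in Theorem \ref{pi}; the only point requiring genuine care is the uniform sup-norm bound on the drift $D_n$, which is precisely where the hypothesis $k_n^{1/2}(M_n-\mu)\to 0$ is consumed.
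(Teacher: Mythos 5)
Your proposal is correct and is essentially the paper's own argument: the paper's proof consists precisely of the remark that a.s. $\sup_{t\in[0,1]}|\tilde{\tilde{Z}}_n(t)-\tilde{Z}_n(t)|\leq \sigma^{-1/2}k_n^{1/2}|M_n-\mu|\rightarrow 0$, combined with Theorem \ref{pi3} via a converging-together argument. You have merely spelled out the details the paper leaves implicit (the vertex decomposition $\tilde S_j=S_j+j(M_n-\mu)$, the attainment of the sup at $j=k_n$, and the passage from conditional to joint convergence), all of which are correct.
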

\begin{proof}
	It is sufficient to remark that a.s.
	$$
	\sup_{t\in [0,1]}|\tilde{\tilde{Z}}_n(t) - \tilde{Z}_n(t)| \leq
	\sigma^{-1/2}(k_n^{1/2}(M_n-\mu)) \rightarrow 0.
	$$
\end{proof}
%%%%%%%%%%%%%%%%%%%%%%%%%%%%%%%%%%%%%%%%%%%%%%%%%%%
\section{Empirical distributions}\label{ED}
\subsection{Randomized  versions of  the Glivenko-Cantelli theorem}
In this section we generalize the Glivenko-Cantelli theorem to multidimensional distributions of  stationary random processes on $\R$  and of  homogeneous random fields on $\R^m$ (to simplify the presentation we assume ergodicity; this assumption can be readily dropped by the consideration of  the conditional expectations as in Lemma \ref{L} below).    Generalized   versions of the Glivenko-Cantelli theorem for marginal distributions of stationary random sequences were proved in  \cite{SS,T}.
We consider a randomized version of such theorem for ergodic homogeneous random fields on $\R^m$.

Let $\mathbf{X}(t,\omega)=(X_1(t,\omega),...,X_l(t,\omega))$ be an $l$-dimensional bi-measurable random field on $\R^m$ over a probability space  $(\Omega_X,\F_X,P_X)$.

We consider the CDF of this random field:
$$F(x_1,...,x_s):= P(X_1(0)\le x_1,...,X_l(0)\le x_l).$$

Let $\{T_n\}$ be a sequence of increasing convex sets in $\R^m$ containing balls with radii $r_n\to\infty$ and let, for each $n\in\N$,\; $\eta_{n,i},\;i=1,...,k_n,$ be random variables over a probability space $(\Omega_\eta,\F_\eta,P_\eta)$, each being uniformly distributed on $T_n$ and independent of each other and of $X$. Denote: $(\Omega_{X,\eta},\F_{X,\eta},P_{X,\eta}):=(\Omega_X\times \Omega_\eta,\F_X\times \F_\eta,P_X\times P_\eta).$
\begin{lemma}\label{L}  If $Y(t,\omega),\;t\in\R^m$, is an  homogeneous   random field  over the probability space  $(\Omega_X,\F_X,P_X)$ and  for some  $\delta>0$
	\begin{equation}\label{cond}
		E_X[|  Y(0)|^{2+\delta}] <\infty,
	\end{equation}
	then with $P_{X,\eta}$-probability 1
	\begin{equation*}\label{}
		\lim_{n\to\infty}\frac1{k_n}\sum_{i=1}^{k_n} Y (\eta_{n,i},\omega) = E_X[Y(0)|\mathcal{I}_X)]
	\end{equation*}
	(if the random field $X$ is ergodic, $ E[Y(0)|\mathcal{I}_X)]= E_X[Y(0)])$.
\end{lemma}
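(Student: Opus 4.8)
The plan is to split the randomized mean into its spatial average and a pure randomization fluctuation, dispose of the first by the Pointwise Ergodic Theorem and of the second by a moment bound together with the first Borel--Cantelli lemma, and finally promote the two $P_\eta$-almost-sure statements to a single $P_{X,\eta}$-almost-sure one by Fubini.

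First I would write, for fixed $\omega$,
$$\frac1{k_n}\sum_{i=1}^{k_n}Y(\eta_{n,i},\omega)=M_n(\omega)+R_n(\omega),\qquad M_n(\omega):=\frac1{\la(T_n)}\int_{T_n}Y(t,\omega)\la(dt),$$
where $R_n(\omega):=\frac1{k_n}\sum_{i=1}^{k_n}[Y(\eta_{n,i},\omega)-M_n(\omega)]$; here I use that, since each $\eta_{n,i}$ is uniform on $T_n$, one has $E_\eta[Y(\eta_{n,i},\omega)]=M_n(\omega)$, exactly as in the relation $E_\tau[X^l(\tau^l_{n,i})]=M^l_n$ of \S\ref{randomization}. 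Because $\{T_n\}$ is a sequence of increasing convex sets whose inradii tend to infinity, it is pointwise averaging (Example \ref{convex}), so the PET of Definition \ref{admis} applied to $Y$ (note $E_X|Y(0)|<\infty$) gives $M_n(\omega)\to E_X[Y(0)\mid\mathcal I_X]$ for $P_X$-almost all $\omega$, which is precisely the asserted limit (and equals $E_X[Y(0)]$ when $X$ is ergodic). It therefore remains to prove that $R_n\to0$ $P_{X,\eta}$-almost surely.

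Now fix a ``good'' $\omega$ on which the PET holds. Conditionally on $\omega$ the centered variables $Y(\eta_{n,i},\omega)-M_n(\omega)$, $i=1,\dots,k_n$, are, for each $n$, i.i.d., with variance $V_n(\omega)$ and absolute $(2+\delta)$-th moment $\rho_n(\omega):=\frac1{\la(T_n)}\int_{T_n}|Y(t,\omega)-M_n(\omega)|^{2+\delta}\la(dt)$. Applying the PET once more (to $Y^2$, and to the homogeneous field $|Y-z|^{2+\delta}$ with invariant center $z=E_X[Y(0)\mid\mathcal I_X]$, the moving center $M_n$ being replaced by its limit up to an asymptotically negligible error thanks to \eqref{cond}) shows that $V_n(\omega)$ and $\rho_n(\omega)$ converge to finite limits, hence are bounded by some $K(\omega)<\infty$. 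Rosenthal's inequality for sums of i.i.d. centered variables then yields
$$E_\eta|R_n(\omega)|^{2+\delta}\le C_\delta\Big[k_n^{-(1+\delta)}\rho_n(\omega)+k_n^{-\frac{2+\delta}{2}}V_n(\omega)^{\frac{2+\delta}{2}}\Big]\le C(\omega)\,k_n^{-\left(1+\frac\delta2\right)}.$$

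Finally, Markov's inequality gives $P_\eta(|R_n(\omega)|>\var)\le C(\omega)\var^{-(2+\delta)}k_n^{-(1+\delta/2)}$ for every $\var>0$; since $\{k_n\}$ is a strictly increasing sequence of natural numbers we have $k_n\ge n$, whence $\sum_n k_n^{-(1+\delta/2)}\le\sum_n n^{-(1+\delta/2)}<\infty$. The first Borel--Cantelli lemma (which requires no independence across $n$) then forces $R_n(\omega)\to0$ $P_\eta$-almost surely, for $P_X$-almost every $\omega$; a Fubini argument identical to part~1 of Lemma \ref{p} promotes this to $R_n\to0$ $P_{X,\eta}$-almost surely, which together with the second paragraph completes the proof. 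The main obstacle is precisely that $\{Y(\eta_{n,i})\}$ is a genuine triangular array (row length $k_n$), so the classical strong law does not apply and the second moment alone yields only the non-summable rate $k_n^{-1}$; the extra $\delta$ in \eqref{cond} is exactly what upgrades the Markov bound to the summable rate $k_n^{-(1+\delta/2)}$, while the strict growth $k_n\ge n$ supplies the remaining ingredient. I would double-check only the Rosenthal constant and the routine handling of the moving center $M_n$ inside $\rho_n$.
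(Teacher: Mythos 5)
Your proof is correct and follows the paper's skeleton exactly: the same decomposition of $\frac1{k_n}\sum_{i=1}^{k_n}Y(\eta_{n,i},\omega)$ into the spatial average $M_n[Y]$ (sent to $E_X[Y(0)|\mathcal{I}_X]$ by the PET, via Example \ref{convex}) plus a randomization fluctuation that must vanish, and the same Fubini promotion from ``$P_\eta$-a.s.\ for $P_X$-a.e.\ $\omega$'' to $P_{X,\eta}$-a.s. Where you genuinely differ is the engine for the fluctuation term $R_n$: the paper disposes of it in one line by citing the strong law for arrays of rowwise independent random variables of Hu--M\'oricz--Taylor \cite{HMT} (Corollary 1 with $p=1$ and Remark 3), a Hsu--Robbins-type complete convergence statement; there the hypothesis \eqref{cond} enters, just as in your argument, through the PET applied to $|Y|^{2+\delta}$, which makes $\sup_n\frac1{\la(T_n)}\int_{T_n}|Y(t,\omega)|^{2+\delta}\la(dt)$ finite and hence gives uniform-in-$n$ control of the conditional distributions of $Y(\eta_{n,i},\omega)$. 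You instead prove the needed complete convergence from scratch: Rosenthal's inequality, Markov with exponent $2+\delta$, and the first Borel--Cantelli lemma, with the explicit rate $E_\eta|R_n(\omega)|^{2+\delta}\le C(\omega)k_n^{-(1+\delta/2)}$. Your observation that Borel--Cantelli requires no independence across $n$ is precisely the right point, and it is unavoidable here: the joint law of the rows $(\eta_{n,i})_{i\le k_n}$ for different $n$ is unspecified, so only a summable-probabilities argument can yield the a.s.\ conclusion — this is equally the mechanism behind the paper's cited route. What the citation buys is brevity; what your route buys is a self-contained, quantitative proof. Your PET application to the field $|Y(t)-z|^{2+\delta}$ with the invariant random center $z$ is also legitimate — it is the same device the paper uses inside the proof of Proposition \ref{lindeberg} — and boundedness (rather than exact convergence) of $\rho_n(\omega)$ is all your Rosenthal step needs.

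One caveat deserves attention: your inequality $k_n\ge n$ presupposes that ``$k_n\uparrow\infty$'' means \emph{strictly} increasing. If it is read as merely nondecreasing, then $\sum_n k_n^{-(1+\delta/2)}$ can diverge (e.g.\ $k_n\sim\log n$) and your Borel--Cantelli step fails. This is not a real defect relative to the paper, though: for such slowly growing $k_n$, choosing the rows independent across $n$ and applying the second Borel--Cantelli lemma shows the conclusion of the lemma itself can fail, and the transfer of the complete convergence of \cite{HMT} to general row lengths likewise exploits growth of $n\mapsto k_n$ (for strictly increasing integer $k_n$ the relevant series re-indexes injectively into a convergent one). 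So under the strict-monotonicity reading — the one under which the lemma is true — your proof is complete.
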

\begin{proof} Denote $M_n[Y]:= \frac1{\la(T_n)}\int_{T_n}Y(t)\la(dt)$. Since with $P_X$-probability 1 for each $n\in\N$ the random variables $ Y (\eta_{n,i},\omega)),\;i=1,...,k_n$ are independent, condition \eqref{cond} implies:
	$P_X$-a.s. with  $P_\eta$-probability 1
	\begin{equation}\label{CLT}
		\lim_{n\to\infty}  \frac1{k_n}\sum_{i=1}^{k_n} Y(\eta_{n,i},\omega) - M_n[Y]]=0,
	\end{equation}
	by virtue of Corollary 1 (with p=1) and Remark 3 in \cite{HMT}. The Fubini theorem implies that \eqref{CLT} is valid with $P_{X,\eta}$-probability 1. By the PET, with $P_X$-probability 1
	\begin{gather}
		\lim_{n\to\infty} M_n[Y] =E[Y(0)|\mathcal{I}_Y].%\nonumber
	\end{gather}
	It remains to note that
	\begin{gather*}
		\frac1{k_n}\sum_{i=1}^{k_n}[ Y (\eta_{n,i},\omega) - E[Y(0)|\mathcal{I}_Y)]=\nonumber\\
		\left(\frac1{k_n}\sum_{i=1}^{k_n}Y (\eta_{n,i},\omega) - M_n[Y]\right)+( M_n-E[Y(0)|\mathcal{I}_Y]).\label{repr}
	\end{gather*}
\end{proof}

%In what follows  $Ind[A]$ means  the indicator of the set $A$.
Denote:
\begin{gather*} F_n (   x^1,...,x^l;\omega,\eta):=\\ \frac1{k_n}\sum_{i=1}^{k_n}Ind[X_1( \eta_{n,i},\omega) \le x^1,..., X_l(  \eta_{n,i},\omega) \le x^l].
\end{gather*}

To simplify the notation we  denote: $   \x =( x^1,...,x^l)\; ( \in \R^l)$.
\begin{theorem}
	Let the random field $\mathbf{X}(t,\omega),\;t\in R^m,$ be ergodic and homogeneous.   With $P_{X,\eta}$-probability 1
	\begin{equation}
		\lim_{n\to\infty} \underset{x\in\R^l} \sup |F_n( \x; \omega,\eta  )-F( \x)|=0.
	\end{equation}
	
\end{theorem}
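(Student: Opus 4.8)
The plan is to reduce the theorem to the pointwise limit already provided by Lemma~\ref{L} and then to upgrade pointwise convergence to uniform convergence by a P\'olya-type monotonicity argument, exactly as in the classical multivariate Glivenko--Cantelli theorem. The two ingredients are therefore: (i) convergence of $F_n$ to $F$ at each fixed argument (and at the relevant ``left-limit'' arguments), and (ii) a finite-grid comparison that is uniform over all of $\R^l$.

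First I would fix a point $\x=(x^1,\dots,x^l)\in\R^l$ and apply Lemma~\ref{L} to the scalar field
$$ Y_{\x}(t,\omega):=Ind[X_1(t,\omega)\le x^1,\dots,X_l(t,\omega)\le x^l]. $$
This field is homogeneous because $\mathbf X$ is, and it is ergodic whenever $\mathbf X$ is, being a shift-covariant factor of $\mathbf X$ (just like the fields $Y_f^l$ treated above). Since $|Y_{\x}|\le 1$, the moment condition \eqref{cond} holds with $\delta=1$, and $E_X[Y_{\x}(0)]=F(\x)$; hence Lemma~\ref{L} gives, with $P_{X,\eta}$-probability $1$, the pointwise convergence $F_n(\x;\omega,\eta)\to F(\x)$. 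The same argument applied to indicator fields in which one or more of the inequalities $\le x^j$ is replaced by a strict inequality $<x^j$ yields convergence of the corresponding ``left-limit'' empirical quantities, written $F_n^{-}$, to the corresponding left limits $F^{-}$ of $F$; these variants are exactly what is needed to handle atoms of the marginal laws.

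Next I would pass to uniform convergence. Let $F_j$ denote the $j$-th one-dimensional marginal of $F$. The key deterministic inequality is that for $\x\le\y$ (coordinatewise)
$$ 0\le F(\y)-F(\x)\le \sum_{j=1}^l\bigl(F_j(y^j)-F_j(x^j)\bigr), $$
which follows from the inclusion $\{\mathbf X(0)\le\y\}\setminus\{\mathbf X(0)\le\x\}\subseteq\bigcup_{j}\{x^j<X_j(0)\le y^j\}$. Fixing $\var>0$, I would choose in each coordinate $j$ finitely many levels $-\infty=a_{j,0}<a_{j,1}<\dots<a_{j,m_j}=+\infty$ such that every \emph{open} marginal slab carries $F_j$-mass at most $\var$, placing the (at most countably many) large atoms of $F_j$ themselves as grid levels so that they are never straddled. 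This yields a finite grid of corner points. Since $F_n$ and $F$ are nondecreasing in each coordinate, every $\x$ lies in a cell with grid corners $\mathbf a^-\le\x\le\mathbf a^+$, and monotonicity squeezes
$$ F_n(\mathbf a^-)-F^{-}(\mathbf a^+)\le F_n(\x)-F(\x)\le F_n^{-}(\mathbf a^+)-F(\mathbf a^-), $$
where $F^{-},F_n^{-}$ are evaluated with strict inequalities in the upper-corner coordinates. Adding and subtracting $F$ and $F^{-}$ at the corners, the oscillation bound gives $F^{-}(\mathbf a^+)-F(\mathbf a^-)\le\sum_{j=1}^l\bigl(F_j(a_{j,k_j}^{-})-F_j(a_{j,k_j-1})\bigr)\le l\var$, so $|F_n(\x)-F(\x)|$ is dominated by the maximum over the finitely many corners of $|F_n-F|$ and $|F_n^{-}-F^{-}|$ plus a term of order $l\var$. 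As the grid is finite, the Step~1 convergence holds simultaneously at all corners (and in all left-limit variants) on a $P_{X,\eta}$-full event, on which $\limsup_n\sup_{\x}|F_n(\x)-F(\x)|\le l\var$.

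Finally I would let $\var=1/N$ run through $N\in\N$ and intersect the corresponding probability-$1$ events; on the resulting $P_{X,\eta}$-full event the supremum converges to $0$, which is the assertion. I expect the only genuine difficulty to be bookkeeping rather than conceptual: positioning the grid levels so that marginal atoms are captured (this is precisely why the strict-inequality, left-limit versions of Step~1 are required) and checking that the multivariate oscillation inequality together with coordinatewise monotonicity really controls the supremum over a whole cell and not merely at its corners. Once each open marginal slab is arranged to carry mass at most $\var$, the remaining estimates are routine.
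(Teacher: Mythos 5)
Your proof is correct, and it takes a genuinely different route from the paper's. The paper partitions the \emph{range} of $F$: it slices $[0,1)$ into intervals $[\frac kM,\frac{k+1}M)$, forms the level sets $L_{k,M}=\{\x: \frac kM\le F(\x)<\frac{k+1}M\}$, and applies Lemma \ref{L} to the indicators of the preimage events $F^{-1}([0,\frac{k+1}M))$ and $F^{-1}([\frac kM,1])$, producing quantities $G_n,H_n$ that bracket $F_n$ on each level set; since the slicing is by the values of $F$ itself, the deterministic oscillation is at most $\frac1M$ \emph{by construction}, and atoms or discontinuities of $F$ require no separate treatment (empty level sets and missing preimages are simply skipped). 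You instead partition the \emph{domain}: a finite grid in $\R^l$ built from marginal quantiles, Lemma \ref{L} applied at the corners and at their strict-inequality (left-limit) variants, and the cell oscillation controlled through $0\le F(\y)-F(\x)\le\sum_{j=1}^l\bigl(F_j(y^j)-F_j(x^j)\bigr)$. Both arguments rest on the same two pillars — Lemma \ref{L} for finitely many bounded indicator fields (your Step 1 is exactly the paper's use of the lemma, since $|Y_{\x}|\le1$ makes condition \eqref{cond} automatic), plus a deterministic finite bracketing — so they are of comparable depth. What yours buys is the familiar multivariate Glivenko--Cantelli skeleton with concrete rectangular cells, at the price of atom bookkeeping in each marginal and the extra left-limit versions of the pointwise step; what the paper's buys is a uniform $\frac1M$ bound with no marginals and no left limits, at the price of working with the less explicit sets $F^{-1}(\cdot)$. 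One small omission on your side: to make the probability-one statement about a genuine event you should record, as the paper does, that $\sup_{\x\in\R^l}|F_n(\x)-F(\x)|=\sup_{\x\in\mathbb Q^l}|F_n(\x)-F(\x)|$, so the supremum is $\F_{X,\eta}$-measurable; with that remark added, your argument is complete.
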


\begin{proof}
	Since the random field $X$ is bi-measurable, the function $ (\omega,\eta)\mapsto  F_n( \x;\omega,\eta  )$  is $\F_{X,\eta}$-measurable. Let $\mathbb Q$ be the set of rational numbers. It is clear that the function      $(\omega,\eta)\mapsto \underset{\x\in\mathbb Q^l} \sup |F_n(\x; \omega,\eta  )-F( \x)|$  is $\F_{X,\eta}$-measurable, and, since $\mathbb Q^l  $ is dense in $\R^l$,
	$$\underset{\x\in\R^l} \sup |F_n( \x; \omega,\eta  )-F( \x)|= \underset{\x\in\mathbb Q^l} \sup |F_n( \x; \omega,\eta  )-F( \x)|,$$
	the function $\underset{\x\in\R^l} \sup |F_n( \x; \omega,\eta  )-F( \x)|$ is also $\F_{X,\eta}$-measurable.
	
	%	The transformation  $y^i=\frac1\pi \arctan x^i+\frac12 ,\;i=1,...,k,$ brings us to consideration  of $\x\in\R^l$ instead of   $\x\in [-\infty,\infty)^l$.  %In what follows the set
	%We consider the following  partial order in $R^{ l}$: $x\le y$ if $x^i\le y^i,i=1,...,l$.

	For each $k,\; 0\le k\le M-1,$ we denote:
	$$
	L_{k,M}= \{\x\in\R^l: \frac kM\le F(\x)< \frac  {k+1}M\}.
	$$
	Of course,  if $F$  is continuous, then   each $\y\in\R^l $ has at least one preimage with respect to the function $F$ and, for each $k$, $F(L_{k,M})=[\frac kM,\frac {k+1}M)$; otherwise, some points in $[0,1] $ do not have   preimages, and, therefore, there are  $k$, such that $[\frac kM,\frac {k+1}M)\setminus F(L_{k,M})\ne\emptyset$,
	and even such that $L_{k,M}=\emptyset$.
	If $\frac kM $ possesses at least one  preimage with respect to $F(\x)$, we denote by  $\x_{k,M}$  one of these preimages; in general,  if $
	L_{k,M}\neq\emptyset $, consider the  set $D_{k,M}:=F(L_{k,M})$, i.e., the set  of all points $\y$ in $[\frac kM,\frac{k+1}M)$ possessing preimages.
	
	Let $0\le k\le { M-1}$ and $L_{k,M}\ne\emptyset$.
	
	Denote:
	\begin{gather*}
		G(k+1,M):=P\{(X_1(0) ,...,X_l(0))\in F^{-1}([0,\frac{k+1}M))\},\\
		H(k,M):=1-P\{(X_1(0) ,...,X_l(0))\in F^{-1}([ \frac{k }M,1])\},
	\end{gather*}		
	and
	\begin{gather*}		
		G_n(k+1,M);\omega,\eta):  =\\
		\frac1{k_n}\sum_{i=1}^{k_n}\text{Ind}[(X_1(\eta_{n,i},\omega),.., X_l(\eta_{n,i},\omega)) \in F^{-1}([0,\frac{k+1}M)],\\
		H_n(k,M);\omega,\eta):  =\\
		1-	\frac1{k_n}\sum_{i=1}^{k_n}\text{Ind}[(X_1(\eta_{n,i},\omega),.., X_l(\eta_{n,i},\omega)) \in F^{-1}([ \frac{k}M,1)]].
	\end{gather*}
	
	Note: a) if $l=1 $, then $G(k,M)=F(\x _{k+1,{M}}-0); H(k,M)=F(\x_{k,M}+0)$;
	
	b) if  the function $F$ is continuous at the point $\x _{k+1,M}$,  then $G(k+1,M) = F(\x_{k+1,M}); H(k,M)=F(\x_{k,M})$.
	
	c) if $\x\in F(L_{k,{M}})$, then $ H(k,M)\le F(\x)\le G(k+1,M)$;\\ $H_n(k,M);\omega,\eta)\le F_n(\x;\omega\eta)\le G_n(k+1,M;\omega,\eta)$;
	
	d)  $0\le G(k+1,M)-H(k,{M})\le \frac1{M}$.

	% $ F(x) -F(y))<\frac1M$ if $x,y\in L_{k,M},k=0,...,M-1$.
	%Denote: $ L_{k,M}:=\{x\in [0,1)^s: F(x)\le \frac  kM\}, \overset{o}L_{k,M}-0:=\{x\in \R^s: F(x)< \frac  kM\}$.
	
	%Denote by $D{k,M}$ set of discontinuity points $x\in [0,1)^s$ of the function $F(\cdot  )$  such  that $\frac kM\notin $[F(x-0).
	%If $l=1$  %\etae put $t=0$ \etarite $F(x)$ instead if $F(0;x)$.  If the equation $F(x)=\frac kM$ has at least one solution \etae denote by $x_{k,M}$ any of them and put  $ L_{k,M}=\{x\in [0,1)^s: \frac kM\le F (x)< \frac  {k+1}M\},\ (k=0,...,M-1)$ .
	%  and $F(t;x)$ suffers a jump  at   $x_0\in [0,1)$, and  $F(t; x_0-0)\le \frac kM<...<\frac {k+m}M < F(t;x_0) $, but $\frac {k-1}M<F(t;x_0-0),\frac  {k+m+1}M>F(x_0)$\ $(m\ge 0)$;  clearly,
	% $L_{k,M}=...=L_{k+m-1,M}=\emptyset$ and $ L_{k-1 ,M}=\{x\in [0,1)^s: \frac {k-1}M\le F(x)< F(x_0-0)\},L_{k+m ,M}=\{x\in [0,1)^s:    F(x_0)\le F(x)< \frac  {k+1}M\}$; in general, if $L_{k,M}\neq \emptyset$, then $x_{k,M}:=\min L_{k,M}\cap D$. %define $x_{k,M}:= \min \{x\in[0,1):\frac kM\le F(t;x)\},k=$. In this case
	
	% If $l\ge 2$,  define  $x_{k,M}$ as some continuity point of the function $F(\cdot  )$, belonging to the set $B_{k,M}:=\{x\in   [0,1)^s: F(x-0)= \frac  kM\}$;

	%If $l\ge 2$,   $x_{k,M}$ is some  point in $L_{k,M}$ such that  $F(x_{k,M})=  \frac kM$.
	
	By  Lemma \ref{L}, for each $k$
	with $P_{X,\eta}$-probability 1
	\begin{gather}\label{lim2}
		|H_n (k,M;\omega,\eta)- H (k,M)| \to0
	\end{gather}
	and
	\begin{equation}\label{plim2}
		|G_n (k+1,M;\omega,\eta)- G (k+1,M)|\to0.
	\end{equation}
	If $\x\in L_{k,M}$,
	\begin{gather*}
		%F_n(\mathbf{t}x;(\omega, \eta))-F(\mathbf{t}; x)\le
		F_n(\x;\omega,\eta) -F (\x)\le  G_n (k+1,M; \omega )-H(k,M)=\\
		\left(G_n (k+1,M;\omega,\eta)-G ( k+1,M )\right)+
		\left(G(k+1,M) -H(k,M) \right)\le\\
		G_n (k+1,M;\omega,\eta)- G (k+1,M)  +\frac1{M}.
	\end{gather*}
	Similarly,
	\begin{gather*} F_n(\x; \omega, \eta )-F( \x)\ge
		H_n(k,M; \omega,\eta )-H({k,M} ) -\frac1{M}.
	\end{gather*}

	Therefore, if $0\le k\le {[M]} $, $L_{k,M}\ne\emptyset$ and $\x\in L_{k,M}$,
	\begin{gather*} | F_n(\x;\omega,\eta)-F( \x)|\le\\
		\max\{|G_n( k+1 ,M  );\omega,\eta)-G( k+1,M )|,
		|H_n( k,M; \omega,\eta )-H({k,M})|\}+\frac1{M},
	\end{gather*}
	It is clear that $\cup_{\{0\le k\le {M}-1\}}L_{k ,M}=\R^l$; therefore,
	\begin{gather*}\label{ln}
		\underset {\x\in\R^l}\sup| F_n(\x;\omega,\eta)-F( \x)|\le \\
		\max\{|G_n(k+1,M );\omega,\eta)-G(k+1,M)|,
		|H_n( k,M; \omega,\eta )-H({k,M})|\}+\frac1{M},
	\end{gather*}
	and, by relations \eqref{lim2} and \eqref{plim2}, with $P_{X,\eta}$-probability 1
	%$$ 0\le\limsup_{n\to\infty}\underset {x\in L_{k,M}}\sup  | F_n(x;\omega, \eta)-F( x)|\le \frac1{M}$$
	
	\begin{gather*}
		0\le \limsup_{n\to\infty}  \underset {\x\in\R^l}\sup| F_n(\x;\omega,\eta)-F( \x)|\le \\
		\limsup_{n\to\infty} \underset {k:L_{k ,M}\ne\emptyset,0\le k\le {M}-1 }\max \underset {\x\in L_{k ,M}}\sup| F_n(\x;\omega,\eta)-F( \x)|\le \frac1{M}.
	\end{gather*}

	Since $M$ is chosen arbitrarily, with $P_{X,\eta}$-probability 1
	$$\lim_{n\to\infty}\underset {\x\in\R^l}\sup | F_n(\x;\omega, \eta)-F( \x)|=0. $$
	
\end{proof}
%we have:
%$$ \limsup_{n\to\infty}\underset {x\in (0,1]^l}\sup  F_n(x;(\omega, w))-F(; x)\le \frac1M$$
%and
%$$ \liminf_{N\to\infty}\underset {x\in (0,1]^l}\inf F_n(x;(\omega, w))-F(; x)\ge -\frac1M$$

Let $X(t,\omega),\,t\in \R^m$, be a   random field  over a probability space \\ $(\Omega_X,\F_X,P_X)$. 
%We fix a set  of "time" points
% $\textbf{t}=\{t_1,...,t_l\in \R^m\}$ where  $l\in\N$, and study the %multidimensional CDF
% $$ F(\textbf{t}; x^1,...,x^l):=P(X(t_1)\le x^1,...,X(t_l)\le x^l).$$
%\begin{remark}
%	In particular, taking $l=1, t_1=0, x\in \R,$
%	and denoting by $F_n$ the one-dimensional marginal empirical distribution function:
%$$
%	F_n(x;\omega,\eta ):=
%	\frac1{k_n}\sum_{i=1}^{k_n}\1_{[X(\eta_{n,i},\;\omega,\eta) \le x]},
%	$$
%we deduce from (\ref{GL1})  the convergence with $P_{X,\eta}$-probability 1
%\begin{equation}\label{GL2}
%	\lim_{n\to\infty}\underset{x\in\R} \sup |F_n(x;\omega,\eta)-F(x)|=0,		
%\end{equation}		
%	where $F$ is the distribution function of $X(0).$		
%\end{remark}

We fix a set  of "time" points
$\t= \{t_1,...,t_l\in \R^m\}$ where  $l\in\N$, and study the multidimensional CDF
$$ F(\t; x^1,...,x^l):=P(X(t_1)\le x^1,...,X(t_l)\le x^l).$$

Denote:
\begin{gather*} F_n(\t; x^1,...,x^l;\omega,\eta):=\\ \frac1{k_n}\sum_{i=1}^{k_n}Ind[X(t_1+\eta_{n,i},\omega) \le x^1,..., X(t_l+\eta_{n,i},\omega) \le x^l];
\end{gather*}
It is the $l$-dimensional randomized empirical distribution function, based on observations of $X$ on randomly chosen points in $\cup_{j=1}^l(t_j+T_n)$.

%To simplify the notation we  denote: $   x =( x^1,...,x^l)( \in \R^l)$ and drop the fixed "time"   values $t_1,...,t_l$ in the notation of functions.

We apply the latter theorem to the ergodic homogeneous random field $\mathbf X(t,\omega)=(X(t_1+t),...,X(t_l+t)$ and come to the following statement.
\begin{theorem}\label{timeCDF}
	Let the random field $X(\t,\omega),\;\t\in R^m,$ be homogeneous and ergodic.   With $P_{X,\eta}$-probability 1
	\begin{equation}
		\lim_{n\to\infty} \underset{ (x^1,...,x^l) \in\R^l} \sup |F_n(\t; x^1,...,x^l;\omega,\eta)- F ( \t;x^1,...,x^l; \omega,\eta  ) |=0.
	\end{equation}
\end{theorem}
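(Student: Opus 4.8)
The plan is to deduce the statement directly from the preceding Glivenko--Cantelli theorem for $l$-dimensional fields, applied to the auxiliary field $\mathbf X(t,\omega):=(X(t_1+t,\omega),\dots,X(t_l+t,\omega))$ announced just before the statement. First I would check that $\mathbf X$ satisfies all the hypotheses of that theorem; then I would verify that its marginal distribution function and its randomized empirical distribution function are exactly $F(\t;\cdot)$ and $F_n(\t;\cdot)$, so that the conclusion of the preceding theorem is literally the assertion to be proved.

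For the hypotheses, write $X_j(t,\omega):=X(t_j+t,\omega)$, so that $\mathbf X=(X_1,\dots,X_l)$. Bi-measurability of $\mathbf X$ follows from the (standing) bi-measurability of $X$ and the measurability of the fixed shifts $t\mapsto t_j+t$. Homogeneity and ergodicity are inherited from $X$ via Property \eqref{covariant}: since $X$ is generated by the shift group $\gamma$, one has $X_j(t,\omega)=X(t_j+t,\omega)=X(t_j,\gamma_t\omega)=X_j(0,\gamma_t\omega)$, so $\mathbf X$ is generated by the same group $\gamma$. As $\gamma$ is measure preserving (because $X$ is homogeneous) and metrically transitive (because $X$ is ergodic), the field $\mathbf X$ is homogeneous and ergodic --- this is precisely the argument already used for the fields $Y_f^l$. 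The same sequence $\{T_n\}$ and the same randomizing variables $\eta_{n,i}$ serve $\mathbf X$, as required.

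Next I would identify the two objects. Evaluating at $t=0$ gives $\mathbf X(0,\omega)=(X(t_1,\omega),\dots,X(t_l,\omega))$, whence the marginal CDF of $\mathbf X$ equals $P(X(t_1)\le x^1,\dots,X(t_l)\le x^l)=F(\t;x^1,\dots,x^l)$. Likewise the randomized EDF of $\mathbf X$ built on $\eta_{n,1},\dots,\eta_{n,k_n}$ is $\frac1{k_n}\sum_{i=1}^{k_n}\mathrm{Ind}[X_1(\eta_{n,i},\omega)\le x^1,\dots,X_l(\eta_{n,i},\omega)\le x^l]$, which upon substituting $X_j(\eta_{n,i},\omega)=X(t_j+\eta_{n,i},\omega)$ becomes exactly $F_n(\t;x^1,\dots,x^l;\omega,\eta)$.

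With these identifications in place, the preceding theorem yields, with $P_{X,\eta}$-probability $1$, the uniform convergence $\sup_{(x^1,\dots,x^l)\in\R^l}|F_n(\t;x^1,\dots,x^l;\omega,\eta)-F(\t;x^1,\dots,x^l)|\to 0$, which is the claim. I do not expect a genuine obstacle here: all the analytic content (the partition of $\R^l$ along level sets of $F$ and the passage to the limit through Lemma \ref{L}) has already been carried out in the preceding theorem, and the only points requiring care are the routine transfer of homogeneity and ergodicity to $\mathbf X$ and the bookkeeping showing that the empirical object attached to $\mathbf X$ coincides with $F_n(\t;\cdot)$.
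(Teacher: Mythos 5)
Your proposal is correct and follows exactly the paper's route: the paper derives Theorem \ref{timeCDF} in one line by applying the preceding multivariate Glivenko--Cantelli theorem to the auxiliary field $\mathbf X(t,\omega)=(X(t_1+t,\omega),\dots,X(t_l+t,\omega))$, just as you do. Your write-up is in fact more careful than the paper's, since you explicitly verify the inherited homogeneity, ergodicity, and bi-measurability of $\mathbf X$ and the identification of its CDF and randomized EDF with $F(\t;\cdot)$ and $F_n(\t;\cdot)$, details the paper leaves implicit.
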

Let $\eta^u=\{\eta_{n,i}^u\},u=1,...,r$  be $r$ independent arrays of randomizing random variables. 
\begin{gather*}
	F_n(\t; x^1,...,x^l;\omega,\eta^u):= \\
	\frac1{k_n}\sum_{i=1}^{k_n}Ind[\{(\omega,\eta):X(t_1+\eta_{n,i}^u,\omega) \le x^1,..., X(t_l+\eta_{n,i}^u,\omega) \le x^l\}]; \\
	F_n^r(\t; x^1,...,x^l;\omega,\eta):=\frac1r\sum_{u=1}^r  F_n(\t; x^1,...,x^l;\omega,\eta^u).
\end{gather*}
\begin{corollary}
	Let the random field $\mathbf{X}(\t,\omega),\;\t\in R^m,$ be ergodic and homogeneous.   With $P_{X,\eta}$-probability 1
	\begin{equation}
		\lim_{n\to\infty} \underset{x\in\R^l} \sup |F_n^r( \t; x^1,...,x^l;\omega,\eta )-F( x)|=0.
	\end{equation}
	\end {corollary}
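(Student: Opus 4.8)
The plan is to deduce the statement directly from Theorem \ref{timeCDF}, which already provides the uniform Glivenko--Cantelli convergence for a single randomizing array, combined with the elementary fact that a finite average of uniformly convergent functions converges uniformly. There is no new analytic content; the corollary merely records that averaging over $r$ independent arrays leaves the almost-sure limit unchanged.

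First I would apply Theorem \ref{timeCDF} separately to each of the $r$ arrays $\eta^u,\;u=1,\ldots,r$. Since the arrays are independent and, for every fixed $u$, the pair $(X,\eta^u)$ satisfies the hypotheses of that theorem, there is for each $u$ an event of full $P_{X,\eta}$-measure on which
$$
\lim_{n\to\infty}\sup_{x\in\R^l}|F_n(\t;x^1,\ldots,x^l;\omega,\eta^u)-F(\t;x^1,\ldots,x^l)|=0.
$$
The measurability of each supremum is justified exactly as in the proof of the previous theorem, by replacing $\R^l$ with the dense countable set $\mathbb Q^l$.

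Next, because there are only finitely many indices $u$, the intersection of these $r$ full-measure events is again of full $P_{X,\eta}$-measure; hence, outside a single $P_{X,\eta}$-null set, all $r$ convergences hold simultaneously. On this set the triangle inequality together with subadditivity of the supremum yields
$$
\sup_{x\in\R^l}|F_n^r(\t;x^1,\ldots,x^l;\omega,\eta)-F(\t;x^1,\ldots,x^l)|\le\frac1r\sum_{u=1}^r\sup_{x\in\R^l}|F_n(\t;x^1,\ldots,x^l;\omega,\eta^u)-F(\t;x^1,\ldots,x^l)|,
$$
and the right-hand side tends to $0$ as $n\to\infty$, being a finite average of terms each converging to $0$. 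This establishes the claim.

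I do not expect any genuine obstacle here: the only points requiring care are the measurability of the suprema (inherited verbatim from Theorem \ref{timeCDF}) and the elementary but essential remark that the union of finitely many almost-sure statements still holds almost surely---a step that would break down if $r$ were allowed to grow with $n$. As with Corollary \ref{d_gen}, the role of the averaging over $r$ independent arrays is not to alter the limit but to enlarge the effective number of observations while preserving the same almost-sure behavior.
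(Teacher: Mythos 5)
Your proposal is correct and follows essentially the same route as the paper: the paper's own proof consists precisely of applying Theorem \ref{timeCDF} to each array $\eta^u$ and then invoking the bound
$\sup_{\x}|F_n^r(\t;\x;\omega,\eta)-F(\x)|\le\frac1r\sum_{u=1}^r\sup_{\x}|F_n(\t;\x;\omega,\eta^u)-F(\x)|$.
Your additional remarks on intersecting the $r$ full-measure events and on measurability of the suprema are sound refinements of details the paper leaves implicit.
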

	This follows from Theorem \ref{timeCDF} and the relation:
	\begin{gather*}
		\underset{ (x^1,...,x^l) \in\R^l}\sup |F_n^r( \t  ; x^1,...,x^l; \omega,\eta  )-F( x)|\le\\ \frac1r\sum_{u=1}^r \underset{ (x^1,...,x^l) \in\R^l}\sup |F_n(\t; x^1,...,x^l;\omega,\eta^u)-F( x)|.
	\end{gather*}
	If $r\ge2$, each  estimator $F_n^r(\t; \x; \omega,\eta  )$ of the CDF involves more observations and, therefore, it is more precise than the estimator $F_n(\t;\x;\omega,\eta) $.
%%%%%%%%%%%%%%%%%%%%%%%%%%%%%%%%%%%%%%%%%%%%%%%%%%%

\subsection{Convergence of the distributions of empirical processes}
 We suppose in this section that $\{X(t,\omega),\,t\in \R^m\},$ is a    bi-measurable stationary random field  with values in $\R^l.$
Moreover we suppose that a.s. $X(t) \in (0,1)^l,$ as the general case can be
reduced to this by the standard transformation
$y^i=\frac1\pi \arctan x^i+\frac12 ,\;i=1,...,l.$

Let
\begin{equation}\label{emp1}
G_n(\x):= {k_n^{1/2}}(F_n(\x)-F(\x)),\;\;\;\x\in [0,1]^l,	
\end{equation}
where $F_n$ is the empirical distribution function for the series of r.v.
\\$\{X(\tau_{n,i}), i = 1,2,\ldots,k_n\},$
$$
F_n(\x) = \frac{1}{k_n}\sum_{i=1}^{k_n}\1_{[0,\x]}(X(\tau_{n,i})),
$$
where $[0,\x] = \prod_{i=1}^{l}[0,x_i]$
and $F$ is the distribution function of $X(0).$  Below we suppose that $F$ is continuous.

We will use also empirical processes with different centering:
$$
L_n(\x) = {k_n^{1/2}}(F_n(\x)-M_n(\x)),\;\;\;\x\in [0,1]^l,	
$$
where $M_n(\x)= \frac{1}{\lambda(T_n)}\int_{T_n}\1_{[0,\x]}(X(s))\lambda(ds).$

Let  $B([0,1]^l)$
be the set of bounded, real-valued, and measurable functions defined
on the $l$-dimensional cube $[0,1]^l$. Let
$ C([0,1]^l)$ be the set of all continuous functions.
Furthermore, let $\DS$ be a subset of
$ B([0,1]^l)$  such that, first,
it contains  $ C([0,1]^l)$ and, second,
the supremum norm of every function in
$\DS$ is determined by the supremum of
the function over a countable subset of  $[0,1]^l$.
Furthermore, assume that there is a metric $\rho$ that
makes $\DS$ a complete separable space whose topology
is weaker than the topology of uniform convergence, and such that
each time when $\rho(x_n, x)\rightarrow 0$ and $x$ is continuous it follows that $x_n$ converges to $x$ uniformly.
Examples of the space  $\DS$ with various
metrics/topologies with mentioned properties
%that make it complete and separable
can be found in a number of works (cf., e.g., \cite{St}, \cite{Dud}
and references therein). For comments on weak convergence
in non-separable spaces
%and how the results of the present
%paper can be adjusted to become valid beyond
%the Skorokhod space $\DS$,
we refer to Section 4 of \cite{DZ} .

It is clear that we can consider $G_n, L_n$ as  random elements of $\DS.$

 We  use the notation
  $\mathring{W}_F$ for the continuous Gaussian  centered process	with correlation function
  $K(\x,\y)=F(\x\wedge \y)-F(\x)F(\y)),\;  \x, \y \in [0,1]^l,$ where
    $\x\wedge \y =(x_1\wedge y_1,\ldots, x_l\wedge y_l).$
    % For the case $F(\x)=\x$ we use the notation $\mathring{W}.$

 Our aim is to state the weak convergence of $G_n$ and $L_n$ to $\mathring{W}_F.$
\begin{theorem}\label{BB}
If the field $\{X_t\}$	is ergodic, then
\begin{equation}\label{BB1}
	L_n \Longrightarrow  \mathring{W}_F.
\end{equation}
If additionally
\begin{equation}\label{cond2}
	\sqrt{k_n}\sup_{\x\in [0,1]^l}|M_n(\x) - F(\x)|   \overset P\to 0,
\end{equation}
then
\begin{equation}\label{BB2}
	G_n \Longrightarrow  \mathring{W}_F.
\end{equation}
\end{theorem}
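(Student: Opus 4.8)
The plan is to exploit the conditional i.i.d.\ structure produced by the randomization, exactly as in the proofs of Theorems \ref{pi} and \ref{pi3}. For $P_X$-almost every fixed $\omega$ the random vectors $X(\tau_{n,1}),\dots,X(\tau_{n,k_n})$ are, with respect to $P_\tau$, independent and identically distributed with common distribution function $M_n(\cdot\,;\omega)$, since each $\tau_{n,i}$ is uniform on $T_n$ and $P_\tau(X(\tau_{n,i})\le\x)=\frac1{\lambda(T_n)}\int_{T_n}\1_{[0,\x]}(X(s))\lambda(ds)=M_n(\x)$. Hence $L_n(\omega,\cdot)=k_n^{1/2}(F_n-M_n)$ is, conditionally on $\omega$, the classical multivariate empirical process of an i.i.d.\ sample centered at its own true distribution $M_n$, which is precisely the object for which a Donsker-type theorem is available.

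First I would prove the conditional weak convergence $L_n(\omega,\cdot)\overset{P_\tau}\Longrightarrow\mathring W_F$ for $P_X$-almost all $\omega$. The base measures converge: by the PET (Definition \ref{admis}), for $P_X$-a.e.\ $\omega$ and each $\x$ one has $M_n(\x)\to F(\x)$, and since $F$ is continuous and the $M_n$ are distribution functions, this convergence is uniform (P\'olya, i.e.\ the argument of the Glivenko--Cantelli theorem). Finite-dimensional convergence of $L_n(\omega,\cdot)$ follows from the multivariate Lindeberg CLT over $(\Omega_\tau,\F_\tau,P_\tau)$: the summands $k_n^{-1/2}(\1_{[0,\x]}(X(\tau_{n,i}))-M_n(\x))$ are bounded, so the Lindeberg condition is automatic, while the covariances $M_n(\x\wedge\y)-M_n(\x)M_n(\y)\to F(\x\wedge\y)-F(\x)F(\y)=K(\x,\y)$ reproduce the covariance of $\mathring W_F$. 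Tightness (asymptotic equicontinuity in $\DS$) is obtained from the fact that the class of lower orthants $\{\1_{[0,\x]}:\x\in[0,1]^l\}$ is a Vapnik--Chervonenkis class, hence a uniform (universal) Donsker class with bounded envelope: the chaining/bracketing oscillation bounds for empirical processes are uniform over the sampling measure, so together with $M_n\Rightarrow F$ the processes $L_n(\omega,\cdot)$ are asymptotically tight with continuous limit $\mathring W_F$.

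Because the limit $\mathring W_F$ does not depend on $\omega$, the conditional convergence upgrades to the unconditional statement \eqref{BB1} by the functional version of Lemma \ref{p} (verbatim the reasoning used at the end of Theorems \ref{pi} and \ref{pi3}): for every bounded continuous $\Phi$ on $\DS$ we have $E_{X,\tau}[\Phi(L_n)]=E_X[E_\tau[\Phi(L_n)]]$, and dominated convergence over $\Omega_X$ gives the limit $E[\Phi(\mathring W_F)]$, since $\DS$ is complete separable and weak convergence there is characterized by such functionals. Part (b) is then immediate: $G_n(\x)-L_n(\x)=k_n^{1/2}(M_n(\x)-F(\x))$, so condition \eqref{cond2} says $\sup_{\x}|G_n-L_n|\overset P\to 0$; because the topology of $\DS$ is weaker than uniform convergence and coincides with it at continuous limits, $G_n$ and $L_n$ share the same weak limit, yielding \eqref{BB2}.

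The hard part is the tightness/asymptotic equicontinuity in the second paragraph: one must control the oscillations of the conditional empirical process uniformly in $n$ while the base measure $M_n$ is itself random (it depends on $\omega$) and varies with $n$. It is precisely the VC property of the orthant class that renders the equicontinuity bounds independent of the sampling measure, so this uniformity — rather than the finite-dimensional CLT, which is routine for bounded summands — is what carries the argument.
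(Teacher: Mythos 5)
Your proposal is correct and its overall architecture coincides with the paper's: condition on the field so that $X(\tau_{n,1}),\dots,X(\tau_{n,k_n})$ are $P_\tau$-i.i.d.\ with common distribution function $M_n(\cdot\,;\omega)$, prove a Donsker-type theorem for this triangular array with $M_n\to F$ uniformly (PET plus the P\'olya/Glivenko--Cantelli argument), decondition because the limit law does not depend on $\omega$, and obtain \eqref{BB2} from \eqref{cond2} via the uniform Slutsky step $\sup_{\x}|G_n(\x)-L_n(\x)|=\sqrt{k_n}\sup_{\x}|M_n(\x)-F(\x)|\overset P\to 0$. Where you genuinely diverge is in the proof of the key lemma (the paper's Lemma \ref{emp}): the authors explicitly remark that they could not find the triangular-array statement in the literature and therefore prove it from scratch, first for uniform one-dimensional marginals via the weak-convergence criterion of \cite{DZ} --- the moment bound $E|V_n(\x)-V_n(\y)|^{2l+2}\le C\|\x-\y\|^{l+1}$ for $\|\x-\y\|\ge 1/k_n$, derived from Petrov's inequality \cite{P}, together with a lattice estimate of the modulus of $J_n$ --- and then for general continuous $J$ by quantile transformations $\psi_n,\psi$ and a three-property approximation family $\{J_\theta\}$. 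You instead outsource tightness to the fact that lower orthants form a VC class with envelope $1$, hence a uniform Donsker class, so asymptotic equicontinuity holds uniformly in the sampling law; combined with the covariance convergence you verify, this is indeed sufficient, and uniform-Donsker theorems for triangular arrays with varying laws $P_n\to P$ do exist (Sheehy and Wellner, Ann.\ Probab.\ 1992; see also \cite{Dud}). Your route is therefore shorter and avoids both the uniform-marginal reduction and the $\theta$-approximation layer, at the price of invoking heavy empirical-process machinery exactly at the step you yourself identify as the hard one --- as written it is an assertion resting on an external theorem you would need to cite and match precisely, whereas the paper's argument is self-contained and elementary. Two further notes: like the paper's lemma, your argument correctly never requires continuity of the conditional law $M_n(\cdot\,;\omega)$ (occupation measures of a continuous-parameter field can have atoms), which is essential; but drop the word ``bracketing'' --- bracketing bounds are not measure-free, and only the uniform-entropy/VC side of the theory delivers the uniformity over sampling measures that your argument needs.
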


\begin{remark}

Condition \eqref{cond2} imposes a restriction on the rate of growth of the sequence $k_n$. In the case, when the a.s. convergence is considered in this condition,   the admissible growth of the integers $k_n$ has been studied in \cite{DS} for  iid random sequences $X(1),X(2),...$; in \cite{Yu}, stationary sequences $X$ are studied, and  it is established,  how  the admissible rate of increase of $k_n$ is specified    by the rate of mixing of $X$ and by rate of growth of its   metric entropy. Condition \eqref{cond2} encourages to study the   admissible rate of the growth of $k_n$ when convergence in probability  is considered (we suspect it will be higher).

   We have
$$
\sqrt{k_n}\sup_{\x\in [0,1]^l}|M_n(\x) - F(\x)| =
\sqrt{\frac{k_n}{\lambda(T_n)}}\sup_{\x\in [0,1]^l}|\xi_n(\x)|,
$$
where
$$
\xi_n(\x)=\lambda(T_n)^{-1/2}\int_{T_n}[\1_{[0,\x]}(X(s))-F(\x)]
\lambda(ds).
$$
We see that $\xi_n$ is empirical process associated with initial process $X,$ hence,
in good cases \footnote{See for example \cite{Yu} which contains a large bibliography concerning the convergence of empirical processes associated with strictly stationary ones.}
	 $\xi_n$ has a continuous limit $\xi$. Then, as the functional $h\rightarrow \sup_{\x\in [0,1]^l}h(\x)$ is a.e. continuous with respect to the distribution of $\xi,$
$\sup_{x\in [0,1]^l}|\xi_n(x)|$ is bounded in probability, and the condition $k_n = o(\lambda(T_n))$ will be sufficient for (\ref{cond2}).
\end{remark}

	We start the proof of the Theorem \ref{BB} with the following Lemma.
	
Let $Y_{n,j},\; j = 1,\ldots,k_n$	be array of i.i.d. $l$-dimensional random vectors in each row having a common distribution function $J_n. $  It is supposed that
$Y_{n,j}\in [0,1]^l$ a.s.

Consider the empirical process
$$
V_n(\x) = {k_n^{1/2}}(H_n(\x)-J_n(\x)),\;\;\;\x\in [0,1]^l,	
$$
where $H_n(\x)= \frac{1}{k_n}\sum_{i=1}^{k_n}\1_{[0,\x]}(Y_{n,i}).$

	\begin{lemma}\label{emp}
Suppose additionally that $J_n$ converges uniformly to some  continuous  distribution
function $J.$

Then
\begin{equation}\label{lem}
	V_n \Longrightarrow \mathring{W}_J.	
\end{equation}

	\end{lemma}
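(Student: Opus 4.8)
The plan is to obtain the weak convergence in $\DS$ from the two standard ingredients: convergence of the finite-dimensional distributions of $V_n$ and stochastic (asymptotic) equicontinuity. Because $\DS$ is complete separable, its topology is weaker than uniform convergence, and the candidate limit $\mathring{W}_J$ has continuous paths, these two ingredients together imply \eqref{lem}.

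First I would establish the finite-dimensional convergence. Fix $\x_1,\dots,\x_p\in[0,1]^l$ and write $V_n(\x_j)=k_n^{-1/2}\sum_{i=1}^{k_n}\xi_{n,i}(\x_j)$, where $\xi_{n,i}(\x):=\1_{[0,\x]}(Y_{n,i})-J_n(\x)$. For each $n$ the summands are i.i.d.\ in $i$, centered, and bounded by $2$, so each normalized summand $\xi_{n,i}(\x)/k_n^{1/2}$ is bounded by $2k_n^{-1/2}\to0$ and the Lindeberg condition holds automatically. Using $\1_{[0,\x]}\1_{[0,\y]}=\1_{[0,\x\wedge\y]}$, the covariance of a single summand is
\begin{equation*}
E[\xi_{n,1}(\x)\,\xi_{n,1}(\y)]=J_n(\x\wedge\y)-J_n(\x)J_n(\y),
\end{equation*}
and, since $J_n\to J$ uniformly, this converges to $K(\x,\y)=J(\x\wedge\y)-J(\x)J(\y)$. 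The multivariate Lindeberg theorem together with the Cram\'er--Wold device then yields $(V_n(\x_1),\dots,V_n(\x_p))\Longrightarrow(\mathring{W}_J(\x_1),\dots,\mathring{W}_J(\x_p))$.

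The harder part is tightness, and this is where the main obstacle lies. I would exploit the fact that the class of lower-left rectangles $\{[0,\x]:\x\in[0,1]^l\}$ is a Vapnik--Chervonenkis class of finite index, so the uniformly bounded indicator class $\{\1_{[0,\x]}\}$ has a polynomial uniform covering number. A chaining/maximal inequality for such classes gives asymptotic equicontinuity of $V_n$ with respect to the $L^2$-semimetric $d_n(\x,\y)=\mu_n([0,\x]\triangle[0,\y])^{1/2}$, where $\mu_n$ is the law of $Y_{n,1}$; uniform convergence $J_n\to J$ lets me replace $d_n$ by the limiting semimetric $d_J(\x,\y)=\mu_J([0,\x]\triangle[0,\y])^{1/2}$, which is continuous because $J$ is continuous (hence $\mu_J$ atomless). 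Concretely, I aim to show that for every $\eta>0$,
\begin{equation*}
\lim_{\de\to0}\limsup_{n\to\infty}P\Big(\sup_{d_J(\x,\y)<\de}|V_n(\x)-V_n(\y)|>\eta\Big)=0.
\end{equation*}
Equivalently, and more in the classical spirit, the same conclusion follows from the Bickel--Wichura moment criterion for $D([0,1]^l)$: for neighbouring blocks $B,C$ one checks the bound $E[V_n(B)^2V_n(C)^2]\le K\,\mu_n(B)\mu_n(C)$, and since $\mu_n\to\mu_J$ with $\mu_J$ atomless this control (whose exponent already suffices in every dimension $l$) gives tightness.

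Finally, combining the finite-dimensional convergence with stochastic equicontinuity identifies the limit as the centered Gaussian process with covariance $K$, that is $\mathring{W}_J$, whose continuity of paths follows from continuity of $J$; this proves \eqref{lem}. The essential difficulty is the tightness step: one must control the fluctuations of the empirical process uniformly across the whole triangular array, and the precise role of the hypothesis that $J_n\to J$ uniformly with $J$ continuous is to force the controlling measure to converge to an atomless limit, so that blocks of small $\mu_J$-measure have uniformly small increments for all large $n$.
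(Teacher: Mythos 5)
Your proposal is correct in its main line but follows a genuinely different route from the paper. The finite-dimensional step is identical (Lindeberg plus Cram\'er--Wold, with the covariance $J_n(\x\wedge\y)-J_n(\x)J_n(\y)\to K(\x,\y)$ by uniform convergence). For tightness, however, you invoke the uniform-entropy/VC machinery: orthant indicators form a VC class, hence a \emph{uniform} Donsker class (Gin\'e--Zinn; the triangular-array consequence you need is the Sheehy--Wellner theorem), giving asymptotic equicontinuity with respect to $d_n$, and your passage from $d_n$ to $d_J$ is legitimate since $d_n^2(\x,\y)=J_n(\x)+J_n(\y)-2J_n(\x\wedge\y)$ converges uniformly to $d_J^2$. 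The paper instead follows Davydov--Zitikis \cite{DZ}: it proves the moment bound $E|V_n(\x)-V_n(\y)|^{2l+2}\le C\|\x-\y\|^{l+1}$ via Petrov's inequality after splitting the symmetric difference of $[0,\x]$ and $[0,\y]$ into at most $2^l$ parallelepipeds --- but only at scales $\|\x-\y\|\ge 1/k_n$ --- and kills the finer scales by a separate estimate of the modulus of $J_n$ on the $1/k_n$-lattice, which forces uniform marginals and hence a two-stage reduction: quantile transforms $\psi_n$ plus the continuous mapping theorem ($f_n(U_n)\overset{d}{=}V_n$), and then a final approximation of a general continuous $J$ by strictly increasing $J_\theta$. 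Your route buys directness (no reduction to uniform marginals, no approximation family, all scales handled at once by chaining in the empirical $L^2$-metric); the paper's route buys elementary self-containedness --- explicit moment computations and transformations, modulo only Theorem~1 of \cite{DZ}. To make your version rigorous you must genuinely quote the uniform Donsker theorem rather than gesture at ``a chaining/maximal inequality,'' since symmetrization and random-entropy control are where the sub-$1/k_n$ scales are actually tamed.

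One concrete caveat: your Bickel--Wichura aside does not work as stated. Classical BW tightness requires a \emph{fixed} control measure with \emph{continuous marginals}; atomlessness of $\mu_J$ is both the wrong property (continuity of $J$ gives null axis-parallel hyperplanes, which is what BW needs, and is strictly stronger than atomlessness when $l\ge 2$) and insufficient to handle the $n$-dependence of $\mu_n$. Uniform convergence $J_n\to J$ yields $\sup_B|\mu_n(B)-\mu_J(B)|\to 0$ over blocks, but not a domination $\mu_n(B)\le C\,\mu_J(B)$, and the substitution of $\mu_J$ for $\mu_n$ in $E[V_n(B)^2V_n(C)^2]\le K\mu_n(B)\mu_n(C)$ fails exactly on blocks of $\mu_J$-measure below the approximation error --- i.e.\ at the small scales that the paper's Step 3 grid estimate is designed to handle separately. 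So treat the BW remark as an unproved variant, not an ``equivalent'' formulation; your primary VC-chaining argument is the one that closes.
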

\begin{remark}
	The case when $J_n = J$ for all $n$ is well known, see, for example,
\cite{BW}, \cite{St} or \cite{D}. But this result for
triangular arrays
we could not find in the literature  and therefore we present here
detailed proof.
It is based on the approach proposed in \cite{DZ}.

\end{remark}
\begin{proof}
At the beginning we consider the case when all one-dimensional marginal distributions
of $Y_{n,j}$ are
uniform on $[0,1].$

Following \cite{DZ}, we need to carry out three steps: establish the convergence of finite-dimensional distributions,
check the moment condition of Theorem 1 of \cite{DZ} and, finally, estimate the modulus of continuity of the function $J_n.$	
	
{\bf Step 1.}	The convergence of finite-dimensional distributions follows in the standard way from Lindeberg condition and Cramer-Wold device.

{\bf Step 2.} Verification of the first condition of Th.1 from \cite{DZ}.

	We shall find it technically more
convenient to work with \\
$\|\x\|:= \max_{1\leq i \leq l} |x_i|.$

	We prove that for some $C>0$ and for all $\x,\y \in [0,1]^l$
	\begin{equation}
	E|V_n(\x) - V_n(\y)|^{2l+2} \leq C\|\x-\y\|^{l+1}\;\; \text{whenever} \;\;\|\x-\y\|\geq \frac{1}{k_n}.
	\end{equation}
Indeed, $V_n(\x) - V_n(\y)$ is a normalized sum of i.i.d. centered random variables
 $\xi_i := \1_{[0,\x]}(Y_{n,i}) - \1_{[0,\y]}(Y_{n,i}) -(J_n(\x)- J_n(\y)).$

Since the symmetric difference of parallelepipeds $[0,\x], [0,\y]$
can be partitioned into a disjoint union of at most $2^l$ parallelepipeds of the form $[\boldsymbol a,\boldsymbol b],$ it suffices to estimate the moments for the individual elements of this partition. Let
$$
\al_i = \1_{[\boldsymbol a,\boldsymbol b]}(Y_{n,i}),\;\;\;
p = P\{Y_{n,i}\in[\boldsymbol a,\boldsymbol b] \}.
$$
As  every
parallelepipede $[\boldsymbol a,\boldsymbol b]$ from our partition has at least one edge whose length is $|x_i-y_i|$ for some $i$, then	$p\leq |J_n(\boldsymbol a)-J_n(\boldsymbol b)|\leq \|\x-\y\|.$

Using inequality of Th.19, \cite{P}, ch.III, we have
\begin{equation}\label{mom}
	E\left|\sum_{1}^{m}(\al_i-p)\right|^{2l+2} \leq C(mE\al_1^{2l+2} + (mE\al_1^2)^{l+1})
	\leq C(mp+m^{l+1}p^{l+1}),
\end{equation}
	since  $E\al_1^{2l+2} = p(1-p)[(1-p)^{2l+1} +p^{2l+1}]\leq p,$ and $E\al_1^2= p(1-p)\leq p.$

Returning to $E|V_n(\x) - V_n(\y)|^{2l+2},$ we get for $m=k_n$ and some $C>0$ the estimation
$$
E|V_n(\x) - V_n(\y)|^{2l+2} \leq C\left( \frac{p}{k_n^{l}}+ p^{l+1}\right).
$$
The right-hand side does not exceed $C\|\x-\y\|^{l+1}$
since $p\leq \|\x-\y\|$  and $\frac{1}{k_n}\leq \|\x-\y\|.$

{\bf Step 3.} We need to estimate $\sqrt{k_n}|J_n(\x)-J_n(\y)|$ when
$\x,\,\y$ are two adjacent points of the lattice $\Gamma_n = \{\boldsymbol{j}/k_n\,|\; \boldsymbol{j}\in [0,k_n]^l\}.$
Due to the condition that marginal distributions of $Y_{n,i}$ are
uniform we get immediately
$$
\sqrt{k_n}\sup_{\x,\y\in \Gamma_n,\;\|\x-\y\|=1/k_n }|J_n(\x)-J_n(\y)| \leq l/\sqrt{k_n}
\rightarrow 0
$$
when $n\rightarrow \infty,$ which complets the proof in this case.

Now we consider the case when limiting function $J$ is continuous and
coordinatewise strictly increasing. We modify slightly the arguments from the second part of the proof of the  Th.16.4. \cite{B}.

For simplicity we use the notation $Y_n$  for $Y_{n,1}$ and  $Y$ for
a random vector having distribution function $J.$ Let $J_n^{(k)},\;
J^{(k)}$ be marginal distribution functions for respectively $Y_n$ and
$Y$:
$$
J_n^{(k)}(x_k)= P\{Y_n^{(k)}\leq x_k\},\;\;\;
J^{(k)}(x_k)= P\{Y^{(k)}\leq x_k\},\;\;x_k \in [0,1].
$$
Let $\psi_n,\,\psi:[0,1]^l \to [0,1]^l$	be define by
$$
\psi_n(\x)  = (J_n^{(1)}(x_1),\ldots,J_n^{(l)}(x_l)),\;\;
\psi(\x)  = (J^{(1)}(x_1),\ldots,J^{(l)}(x_l)).
$$

Now let  $Z_{n,i}^{(k)}= J_n^{(k)}(Y_{n,i}^{(k)}), \;k=1,\ldots,l,\;\; i=1,\ldots, k_n.$ Then
vectors $ Z_{n,i} = (Z_{n,i}^{(1)},\ldots,Z_{n,i}^{(l)} )$
are i.i.d. and have $[0,1]$-uniformly distributed coordinates.

Let  $Z_n = \psi_n(Y_n),\;\;\; Z= \psi(Y). $ If $\x_n \to \x,$ then
$\psi_n(\x_n)\to \psi(\x)$ and by Th.5.5.\cite{B} we get the convergence $Z_n \Rightarrow \psi(Y)$ wich gives the convergence of distribution functions $F_{Z_n} (\x)\to F_Z(\x),$ and this convergence is uniform as $F_Z$ is continuous. Hence we can apply to the triangular array $\{Z_{n,i}\}$ our previous result which sais that the empirical processes $U_n,$
\begin{equation}
	U_n(\x) = \sqrt{k_n}(A_n(\x)-F_{Z_n}(\x)),
	A_n(\x) =  \frac{1}{k_n}\sum_{i=1}^{k_n}\1_{[0,\x]}(Z_{n,i}),
\end{equation}
 converge weakly to $\mathring{W}_{F_Z}.$

 Now define two mappings $\va_n,\;\va$ inverse to $\psi_n,\;\psi:$
 $$
\va_n(\y)=(...,\va_n^{(k)}(y_k),...),\;\;\; \va(\y)=(...,\va^{(k)}(y_k),...),
 $$
 $$
 \va_n^{(k)}(s) = \inf\{t\,|\,s\leq J_n^{(k)}(t)\},\;\;
 \va^{(k)}(s) = \inf\{t\,|\,s\leq J^{(k)}(t)\}.
 $$
It is clear that the vectors $(Y_{n,1},\ldots,Y_{n,k_n})$ and $(\va_n(Z_{n,1}),\ldots,\va_n(Z_{n,k_n}))$ will have the same distribution.

Define  $f_n, f:\DS\to \DS$ respectively  by   $$
(f_ny)(\x)=y(J_n(\x));\;\;(fy)(\x) = y(J(\x))),\;\;y\in \DS,\; \x\in [0,1]^l.
$$
 If $y_n$ converges to $y$ in $\DS$ and $y\in \C[0,1]^l,$ then the convergence is uniform and
 $f_n(y_n)$ will converge to $f(y)$ uniformly, due to the uniforme convergence of $J_n$  to $J.$
 Hence, by Th.5.5 \cite{B},
 $$
 f_n(U_n) \Longrightarrow f({\mathring{W}_{F_Z}}),
 $$
which gives the result since we have the equalities in distribution\\
 $f_n(U_n) \overset {d } = V_n $  and $f(\mathring{W}_{F_Z}) \overset {d} =\mathring{W}_J. $

 To complete the proof of the lemma, it suffices
 bring into consideration a family $\{L_\theta\}$ of
 triangular arrays, for which distribution functions $J_\theta$
  are continuous, strictly increasing, and uniformly converge to
the distribution function $J_{\theta_0}:= J$ when $\theta \to \theta_0.$
Let $V_{n,\theta}$ be the empiric process associated with $L_\theta$
and $V_{n,\theta_0}=V_n.$
Then we have the following properties:
\begin{enumerate}
	\item {For each } \;$\theta,\;\;\;   V_{n,\theta} \Rightarrow \mathring{w}_{J_{\theta}},$  as $n\to\infty$ (by previous consideration.)
\item  For each $n,\;\; V_{n,\theta} \Rightarrow V_{n,\theta_0,}$ as
$\;\theta \to \theta_0$ (due to the convergence\\
 $J_\theta \to J_{\theta_0}.$)

\item	$\mathring{W}_{J_{\theta}}\Rightarrow \mathring{W}_{J_{\theta_0}},$ as
$\;\;\theta \to \theta_0$ (due to the convergence of covariance functions.)
\end{enumerate}
From these three properties it follows immediately the convergence\\
$V_n=V_{n,\theta_0} \Rightarrow \mathring{W}_{J_{\theta_0}}= \mathring{W}_{J}$  which finishes the proof.
\end{proof}

{\it Proof of Th.\ref{BB}.}
Remark that by Glivenko-Cantelli theorem \\
$M_n(\x)= \frac{1}{\lambda(T_n)}\int_{T_n}\1_{[0,\x]}(X(s))\lambda(ds).$
a. s. converges uniformly to $F.$
Therefore, applying previous Lemma \ref{emp} to the vectors
$X(\tau_{n,i})$ which are  i.i.d. conditionally given field $X,$ we get
the convergence :
$$
	L_n \overset {P_\tau} \Longrightarrow  \mathring{W}_F,
$$
from which the convergence (\ref{BB1}) follows in the usual way.

The convergence (\ref{BB2})  follows
immediately from (\ref{BB1}) due to  the condition (\ref{cond2}):
$$
\sup_{\x\in [0,1]^l}|L_n(\x) - G_n(\x)| =
\sqrt{k_n}\sup_{x\in [0,1]^l}|M_n(\x) - F(\x)|   \overset P\to 0.
$$
\hfill $\square$
%\end{theorem}

Let $l=1.$ Applying once more the continuous mapping theorem \cite{B},
we deduce from (\ref{BB1}) an analog of the famous Kolmogorov's
	result.
\begin{corollary}
Let $l=1,$ $X$ be an ergodic homogeneous random field and   condition \eqref{cond2} be fulfilled. Then 
$$
	P\{\sup_y|  {k_n^{1/2}}(F_n(y)-F(y))|\leq x\} \rightarrow
	\sum_{k=-\infty}^{\infty}(-1)^k e^{-2k^2x^2},\;\;x> 0.
	$$
\end{corollary}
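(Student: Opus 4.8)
The plan is to deduce this from the functional limit theorem already established, namely relation \eqref{BB2} in Theorem \ref{BB}, by applying the continuous mapping theorem to the supremum functional and then identifying the law of the limit with the classical Kolmogorov distribution.

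First I would specialize Theorem \ref{BB} to $l=1$. Since condition \eqref{cond2} is assumed, relation \eqref{BB2} gives $G_n \Longrightarrow \mathring{W}_F$ as random elements of $\DS$, where $G_n(y)=k_n^{1/2}(F_n(y)-F(y))$ and $\mathring{W}_F$ is the continuous centered Gaussian process with covariance $K(x,y)=F(x\wedge y)-F(x)F(y)$. Next I would apply the continuous mapping theorem with the functional $\Phi:\DS\to\R$, $\Phi(h)=\sup_y|h(y)|$. By the defining properties of the space $\DS$ — the supremum norm of each element is determined by the supremum over a fixed countable set, and $\rho$-convergence to a continuous limit forces uniform convergence — the map $\Phi$ is measurable and is continuous at every continuous function. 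Since $\mathring{W}_F$ has continuous sample paths with probability one, $\Phi$ is continuous $\mathring{W}_F$-almost surely, so the continuous mapping theorem yields
\[
\sup_y|G_n(y)| = \sup_y\bigl|k_n^{1/2}(F_n(y)-F(y))\bigr| \Longrightarrow \sup_y|\mathring{W}_F(y)|.
\]

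It then remains to compute the law of $\sup_y|\mathring{W}_F(y)|$. Because $F$ is a continuous distribution function, $\mathring{W}_F$ is a time-changed Brownian bridge: writing $B^\circ$ for a standard Brownian bridge on $[0,1]$, the two centered Gaussian processes $\{\mathring{W}_F(y)\}_y$ and $\{B^\circ(F(y))\}_y$ share the covariance $F(x\wedge y)-F(x)F(y)$ (here one uses $F(x)\wedge F(y)=F(x\wedge y)$ by monotonicity), so $\{\mathring{W}_F(y)\}_y \distributedas \{B^\circ(F(y))\}_y$. Since the range of the continuous $F$ is dense in $[0,1]$ and $B^\circ$ has continuous paths, $\sup_y|\mathring{W}_F(y)| \distributedas \sup_{t\in[0,1]}|B^\circ(t)|$. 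The distribution of the latter is the classical Kolmogorov law $P\{\sup_{t}|B^\circ(t)|\le x\}=\sum_{k=-\infty}^{\infty}(-1)^k e^{-2k^2x^2}$ for $x>0$, which together with the convergence above gives the assertion.

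The main obstacle is the rigorous application of the continuous mapping theorem in the possibly non-separable space $\DS$: one must check that $\Phi$ is $\rho$-continuous precisely on the continuous functions, where the limit concentrates, and that the events involved are measurable. This is exactly what the careful axiomatization of $\DS$ — together with the weak-convergence framework of \cite{DZ} invoked in the proof of Lemma \ref{emp} — was designed to guarantee, so this step reduces to invoking those properties rather than to new analysis; the subsequent identification of the limit law with the Kolmogorov distribution via the time change of the Brownian bridge is then routine.
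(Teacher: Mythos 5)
Your proof is correct and takes essentially the same route as the paper, which deduces the corollary from Theorem \ref{BB} by applying the continuous mapping theorem to the supremum functional and identifying the limit law as that of $\sup_{t\in[0,1]}|B^{\circ}(t)|$; the details you supply (a.s.\ continuity of $h\mapsto\sup_y|h(y)|$ on $\DS$ at continuous functions, and the time-change identification $\mathring{W}_F \distributedas B^{\circ}\circ F$ using continuity of $F$) are exactly what the paper's one-line proof leaves implicit. Your only deviation is a correction: the paper cites \eqref{BB1}, whereas, as you rightly use, it is \eqref{BB2} (i.e.\ \eqref{BB1} combined with condition \eqref{cond2}) that yields the convergence of $G_n=k_n^{1/2}(F_n-F)$.
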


\begin{remark}
	Theorem \ref{BB}  allows one to analyze the asymptotic behavior of empirical processes associated with finite-dimensional distributions of the original field.
	
	Let $X(t,\omega),\,t\in \R^m,\; X(t)\in \R^l,$ be a   random field  over a probability space  $(\Omega_X,\F_X,P_X).$
	We fixe the points $t_1,...,t_l$ and consider the distribution function
	$$
	F( \x):=P\{X(t_1)\le x^1,...,X(t_l)\le x^l\},\;\;\; \x=(x^1,...,x^l),
	$$
	of the vector $(X(t_1),...,X(t_l))$  and associated with it
	 the $l$-dimensional randomized empirical distribution function $F_n$,
	\begin{gather*} F_n( \x):= \frac1{k_n}\sum_{i=1}^{k_n}Ind[X(t_1+\tau_{n,i}) \le x^1,..., X(t_l+\tau_{n,i}) \le x^l];
	\end{gather*}
here, as before, $\{\tau_{n,i}\}$ is a randomizing triangular array.
Let
$$
M_n(\x) = E_\tau (F_n(\x)) = \frac{1}{\lambda(T_n)}\int_{T_n}
\1_{[0,\x]}(X(t_1+s)),...,X(t_l+s))\lambda(ds).
$$

	Consider the empirical processes:
	$$
	L_n(\x) = {k_n^{1/2}}(F_n(\x)-M_n(\x)),\;\;\;\x\in [0,1]^l,	
	$$
	$$
	G_n(\x) = {k_n^{1/2}}(F_n(\x)-F(\x)),\;\;\;\x\in [0,1]^l.
	$$

	We apply the Theorem \ref{BB} to the ergodic homogeneous random field $\mathbf X(t,\omega)=(X(t_1+t),...,X(t_l+t))$ and come to the following statement.
	\begin{theorem}
		Let the random field $X(\t,\omega),\;\t\in R^m,$ be homogeneous and ergodic.   With $P_{X,\tau}$-probability 1
		\begin{equation}
			L_n \Longrightarrow  \mathring{W}_F.
		\end{equation}
		If additionally
		\begin{equation}\label{cond3}
			\sqrt{k_n}\sup_{\x\in [0,1]^l}|M_n(\x) - F(\x)|   \overset P\to 0,
		\end{equation}
		then
		\begin{equation}
			G_n \Longrightarrow  \mathring{W}_F.
		\end{equation}
	\end{theorem}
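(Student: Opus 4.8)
The plan is to reduce this statement directly to Theorem \ref{BB} by passing to the auxiliary $\R^l$-valued field
$$\mathbf X(t,\omega)=(X(t_1+t,\omega),\dots,X(t_l+t,\omega)),\qquad t\in\R^m,$$
so that the empirical distribution function $F_n$, its centering $M_n$, and the processes $L_n,G_n$ written just above are literally the objects produced by Theorem \ref{BB} when that theorem is applied to $\mathbf X$ in place of $X$. The bulk of the work is therefore to verify that $\mathbf X$ meets the hypotheses of Theorem \ref{BB}.

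First I would record the structural properties of $\mathbf X$. Each component $(t,\omega)\mapsto X(t_j+t,\omega)$ is the composition of the bi-measurable field $X$ with a measurable shift, so $\mathbf X$ is a bi-measurable $\R^l$-valued field. Using the covariance relation \eqref{covariant} coordinatewise gives $\mathbf X(s+t,\omega)=\mathbf X(s,\gamma_t\omega)$, which shows that $\mathbf X$ is generated by the same shift group $\gamma$ as $X$; hence $\mathbf X$ is homogeneous, and, since $\gamma$ is metrically transitive when $X$ is ergodic, $\mathbf X$ is ergodic as well. This is exactly the inheritance argument already used for the fields $Y_f^l$ earlier in the paper. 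Moreover $\mathbf X(0)=(X(t_1),\dots,X(t_l))$ has distribution function $F(\x)$, so the continuity required by Theorem \ref{BB} is precisely the continuity of this $l$-dimensional CDF, which I would take as standing hypothesis.

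Next I would identify the objects. The randomized empirical distribution function built from $\mathbf X(\tau_{n,i})=(X(t_1+\tau_{n,i}),\dots,X(t_l+\tau_{n,i}))$ is precisely $F_n(\x)$; its conditional mean $E_\tau(F_n(\x))$ equals $M_n(\x)=\frac1{\la(T_n)}\int_{T_n}\1_{[0,\x]}(\mathbf X(s))\la(ds)$; and consequently $L_n$ and $G_n$ coincide with the empirical processes of Theorem \ref{BB}. The one point requiring care is that Theorem \ref{BB} is stated for fields valued in $(0,1)^l$: I would dispose of this by the coordinatewise homeomorphism $y^i=\tfrac1\pi\arctan x^i+\tfrac12$ invoked in that section, which carries $\mathbf X$ into a field in $(0,1)^l$, transforms $F$ and $\mathring W_F$ in an order-preserving way, and preserves supremum-norm convergence. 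Applying Theorem \ref{BB} to the transformed field then yields $L_n\Rightarrow\mathring W_F$ unconditionally, and $G_n\Rightarrow\mathring W_F$ under \eqref{cond3}, which is simply condition \eqref{cond2} written for $\mathbf X$.

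The main, and essentially only nontrivial, obstacle is confirming that ergodicity transfers from $X$ to the vector field $\mathbf X$; this is clean here because $\mathbf X$ is generated by the identical group $\gamma$, so no new ergodicity hypothesis is introduced and no spectral or mixing condition is needed. Everything else is a matching of definitions, so I expect the argument to read as a short corollary of Theorem \ref{BB} rather than as an independent proof.
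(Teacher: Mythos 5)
Your proposal is correct and follows the paper's route exactly: the paper derives this theorem by applying Theorem \ref{BB} to the auxiliary ergodic homogeneous field $\mathbf X(t,\omega)=(X(t_1+t),\dots,X(t_l+t))$, which is precisely your reduction. The details you supply (inheritance of homogeneity and ergodicity via the shift group, identification of $F_n$, $M_n$, $L_n$, $G_n$, and the reduction to $(0,1)^l$ by the arctan map) are exactly what the paper leaves implicit, including the standing continuity assumption on $F$.
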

	
\end{remark}
\vspace{35pt}	

\begin{appendix}
	\centerline{\textbf{APPENDIX}}
	\section{Comments on   conditions \eqref{cond16} and \eqref{cond17}}\label{remarks}

%%%%%%%%%%%%%%%%%%%%%%%%%%%%%%%%

In what follows we put $d=1$ for simplicity.
\begin{remark}\label{k_n} The proofs of Theorems \ref{CLT1} and \ref{CLT2} suggest  that, in these theorems, it is reasonable to use sequences $\{k_n\}$ growing to $\infty$ rather fast, i.e., to chose the "size of randomization" as large as possible. But, in Theorems \ref{CLT3} and \ref{CLT4}, conditions \eqref{cond16} and \eqref{cond17} put some restrictions to this growth. These conditions   are related to the speed of convergence in the Pointwise and   Mean Ergodic Theorems, respectively. In the following notes we provide  an idea of these conditions.   We start with well known results related to condition \eqref{cond16} in the case $T=\Z,\;\; T_n^l=\{1,...,n\}$.
	
	1. There are no universal  sequences $k_n$ satisfying condition \eqref{cond16}: for each non-decreasing sequence $k_n\to\infty$ there is a bounded ergodic  stationary  sequence $\{X_n\}$ such that  $\{k_n\}$ does not satisfy this condition \cite{Kr}.
	
	2.  The sequence $k_n=n^2$ does not satisfy condition \eqref{cond16} \cite{K1}.
	
	3. For each ergodic  stationary random field there is a  non-decreasing sequence of integers $k_n\to \infty$ that satisfies condition   \eqref{cond16} \cite{KP2}.
	
	4. Note 1 shows that the rate of convergence in the PET depends on the properties of the field $X$; about this relation see \cite{K1,KP1,KP2} and references therein.
	
	5. If
	$\sup_n (n^\gamma  Var_X(M_n)))<\infty$, where $\gamma>0$,  then \eqref{cond16} holds with $k_n=n^\al$, where $\al<\frac{\gamma}2$ (\cite{CL}, Proposition 1). See Notes 6 and 9 - 11 below for examples.
	
	Now we turn to condition \eqref{cond17}.
	
	6. By the Chebyshev inequality, condition
	\eqref{cond17} holds if \\ $k_n Var_X(M_n)\to 0$. If the field $X$ is  wide-sense ergodic,  then, by the Mean Ergodic Theorem,  $ Var_X(M_n)\to 0$ (see Remark \ref {MET}); therefore, in this case   \eqref{cond17} is always fulfilled if  $k_n$ grows  sufficiently slow:
	\begin{equation}\label{L2}
		k_n =o(  (Var_X(M_n))^{-1}).
	\end{equation}

	7.  If $\{X(k)\}$ is a mixing  stationary random sequence,  $E_X[(X(0))^2]<\infty$ and $ T_n^l=\{1,...,n\}$, then the sequence $k_n=n^2$  does not satisfy  condition  \eqref{L2}.
	
	8. Let $T=\R^m$ or $\Z^m, m\ge 1 $. Assume that the CLT holds in its classical form: $ (\la(T_n))^\frac12\sigma^{-1}(M_n-\mu) \overset D \to Z$, where $Z$ is the standard normal random variable. We have: $k_n^\frac12(M_n-\mu) =(\frac{k_n\sigma^2}{\la(T_n)})^\frac12 (\la(T_n))^\frac12\sigma^{-1}(M_n-\mu)$; therefore $k_n^\frac12(M_n-\mu)\overset D\to 0$, if and only if $k_n=o((\la(T_n)))$.
	A rather often case when the classical CLT fails is when $(\la(T_n))^\frac12(M_n-\mu) \overset D \Rightarrow0$, hence    condition \eqref{cond17} is fulfilled with   $k_n=O((\la(T_n)))$.

	9. Let $\{\xi_k\}$ be an  ergodic stationary Markov chain  with the   probability   state space $(\mathcal X,\mathcal A,m    )$ ($m$ the initial probability  measure). Let $Q(x,A)$ be the transition function,  and let $F$ be the (dense) set  in   $L^2(\mathcal X,\mathcal A,m)$ consisting of all functions $f: f(x)=g(x)-\int_{\mathcal X}g(y)Q(x,dy),g\in L^2( \mathcal X,\mathcal A,m) $. Then,  for all     $f\in F$   the following alternative holds: the stationary random sequence $X_k=f(\xi_k)$ either satisfies the CLT in the classical form or $Var[M_n]=O(n^{-1})$\cite{GL}; this alternative holds also for $f$ in a larger class of functions, which contains $F$ (see \cite{MW}). According to the previous note,  in both cases each sequence $k_n=o(n)$ satisfies condition \eqref{cond17}.
	
	10. Let $m\ge1$, $X(t), t \in \Z^m$ be an wide-sense stationary random sequence on $T$, satisfying Condition (A),
$ E[X(0)] = \mu$. Let $R(t)$ be  its covariance function.

Condition \eqref{cond17} holds if

-  $k_n=o(n^{  a})$ and $|R(t)|=O(|t|^{-a})$, $0<a< m$, or

-  $k_n=o((\frac{n^m}{\log n}))$ and $|R(t)|=O(|t|^{-m})$, or

 -  $k_n=o(n^{m})$ and  $|R(t)|=O(|t|^{-a})$,  $a>m$).

%d) $r_n=o(n^{\frac m{2p}})$ and $\int_T|R(t)|^p\la(dt)<\infty \ (1< p<\infty).$
	11. Let  $T=\R^m$ or $\Z^m, m\ge 1$,  and let  $T_n=(0,n]^m,n\in\N$; if $X$ is wide-sense stationary and possesses a  spectral density $\psi$, which is continuous at $0$, then   condition \eqref{cond16} holds if $k_n=o(n^m)$. % if and only if $\va(0)=0$
	This follows from the relation  $$n^mVar_X\left(\int_{T_n}X(t)\la(dt)\right) =(2\pi)^m\psi({0})+o(1)$$ (in   the cases $T=\Z$ and $T=\R$ these are, respectively,  Theorems 18.2.1 and  18.3.2 in \cite{IL}; if $T=\Z^m,m\ge 2,$  this   is  Theorem 3 in \cite{K3}; the case  $T=\R^m,\\ \;m\ge 2,$ can be treated  similarly).
	
	12. In the case $m=1$ various restrictions on   the spectrum and the correlation function implying a given rate of convergence in the Mean Ergodic Theorem, hence on restrictions  on the choice of $k_n$ in \eqref{cond17} are discussed in  \cite{K1,K2,KP1,KR}.
\end{remark}

\section{Randomization using non-uniform \\ distributions}\label{measures}
Let $X(t)=(X^1(t),...,X^d(t)),t\in T,$ be a $\B\times \F$-measurable   random field with  homogeneous components ($d\in \N$) and $E[|X(0)|^{2}]<\infty$ for some $\delta>0$.  For each $l=1,...,d, n\in\N,$ consider  a   probability Borel measure   $\{q_n^l\}$ on $T_n^l$, which is  the distribution     of the $T^l_n$-valued i.i.d. randomizing  random vectors $\tau^l_{n, i},i=1,...,k_n,$ introduced  in Subsection 1.5.  By the Fubini  theorem, for each measurable function $f$ on $\R$  such that $E_P[f(X(0))]<\infty $ we have: $  \int_{T^l_n}|f(X(t))|q_n^l(dt)<\infty$ a.s.,  and
\begin{equation*}\label{Ew1}
	E_\tau [f(X(\tau_{n,i}))]= \int_{T^l_n}f(X(t))q_n^l(dt), i=1,..., k_n.
\end{equation*}
In particular,
\begin{gather*}\label{E1}
	M_n^l:= E_\tau [X(\tau_{n,i}^l)]=\int_{T_n^l}X(t)q_n^l(dt); \\ V_n^l:= Var_\tau[X(\tau_{n,i}^l)]=\int_{T_n^l}(X(t)-M_n^l)^2q_n^l(dt).
\end{gather*}
Hence
\begin{gather*} E_\tau[\sum_{i=1}^{k_n}f(X(\tau_{n,i}))]=
	k_n\int_{T_n^l}f(X(t))q_n^l(dt).
\end{gather*}
\begin{definition}\label{sets}
	We say that the sequence of probability Borel  measures $\{q_n\}$ is \emph{pointwise averaging}  if for each  homogeneous random field $X$
	the following PET with the "weights" $q_n$  holds: if $E_X[|f(X(0))|]<\infty $ then $\lim_{n\to\infty}\int_{T}f(X(t))q_n(dt)=E_X[f(X(0))|\mathcal{I}_X]$ with probability 1; in the case, when the measures $q_n$ possess densities $\va_n$ with respect to $\la$, we say that the sequence $\{\va_n\} $ is \emph{pointwise averaging}.
\end{definition}

We provide a simple lemma that helps to construct pointwise averaging sequences in $\R^m$.
\begin{lemma} \label{dens} Let $\{\va_n\}$ be a sequence of densities on $\R^m$ with compact supports $S_n$ in $T$; if  there are positive numbers $a_n$ such that the  sets $T_n^l:=\{(x,y):x\in S_n,0\le y\le a_n\va_n(x)\}$ form a  pointwise averaging sequence in $\R^{m+1}$, then $\{\va_n\}$ is pointwise averaging.
\end{lemma}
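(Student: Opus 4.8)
The plan is to reduce the weighted averaging statement for $\{\va_n\}$ on $\R^m$ to the assumed unweighted Pointwise Ergodic Theorem for the hypograph sets $T_n^l$ in $\R^{m+1}$, using the elementary ``area under the graph'' device together with a lifting of the field by one dummy coordinate. In view of Definition \ref{sets}, what must be shown is that for every scalar homogeneous field $X$ on $\R^m$ and every measurable $f$ on $\R$ with $E_X[|f(X(0))|]<\infty$ one has $\int_T f(X(t))\,q_n(dt)\to E_X[f(X(0))\mid\mathcal{I}_X]$ with $P_X$-probability $1$, where $q_n$ is the measure with density $\va_n$.

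First I would lift the given field $X$ to a scalar field on $\R^{m+1}=\R^m\times\R$ that ignores the extra coordinate: set $\widetilde X(x,y,\omega):=X(x,\omega)$, generated on $(\Omega_X,\F_X,P_X)$ by the family $\widetilde\gamma_{(s,r)}:=\gamma_s$. Since $\widetilde\gamma_{(s,r)+(s',r')}=\gamma_{s+s'}=\widetilde\gamma_{(s,r)}\widetilde\gamma_{(s',r')}$ and each $\gamma_s$ is measure preserving, $\widetilde\gamma$ is a measure preserving group, so by Property \eqref{covariant} the field $\widetilde X$ is homogeneous on $\R^{m+1}$, and $\B(\R^{m+1})\times\F_X$-measurability of $\widetilde X$ follows from that of $X$. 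The point to check is that the shift in the $(m{+}1)$-th direction acts as the identity, whence $\mathcal{I}_{\widetilde X}=\mathcal{I}_X$; combined with $\widetilde X(0)=X(0)$ this gives $E_X[f(\widetilde X(0))\mid\mathcal{I}_{\widetilde X}]=E_X[f(X(0))\mid\mathcal{I}_X]$. Moreover $E_X[|f(\widetilde X(0))|]=E_X[|f(X(0))|]<\infty$, so the hypothesis of the PET is satisfied for $\widetilde X$ and $f$.

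Next I would carry out the fibrewise (Fubini) computation. Because $\va_n$ is a probability density supported on $S_n$, the hypograph has volume $\la(T_n^l)=\int_{S_n}a_n\va_n(x)\,\la(dx)=a_n$, and because $\widetilde X(x,y)=X(x)$ does not depend on $y$, one gets $\int_{T_n^l}f(\widetilde X(s))\,\la(ds)=\int_{S_n}a_n\va_n(x)f(X(x))\,\la(dx)=a_n\int_T f(X(t))\,q_n(dt)$. Dividing by $\la(T_n^l)=a_n$ yields the identity $\frac{1}{\la(T_n^l)}\int_{T_n^l}f(\widetilde X(s))\,\la(ds)=\int_T f(X(t))\,q_n(dt)$. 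Now applying the assumed pointwise averaging of $\{T_n^l\}$ in $\R^{m+1}$ to the field $\widetilde X$, the left-hand side converges $P_X$-a.s. to $E_X[f(\widetilde X(0))\mid\mathcal{I}_{\widetilde X}]=E_X[f(X(0))\mid\mathcal{I}_X]$; hence $\int_T f(X(t))\,q_n(dt)\to E_X[f(X(0))\mid\mathcal{I}_X]$ $P_X$-a.s., which is exactly the property defining $\{\va_n\}$ as pointwise averaging.

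The argument involves no hard analysis: the whole content is the hypograph identity $\la(T_n^l)=a_n$ together with the fibrewise integration, and the a.s.\ statements are on the same space $(\Omega_X,\F_X,P_X)$, so no transfer of null sets is needed. The only step demanding a little care is verifying that the lifted field $\widetilde X$ is a genuine homogeneous field whose invariant $\sigma$-field, and therefore whose conditional expectation on the right-hand side, coincides with that of $X$; this is where one uses that the added coordinate contributes only a trivial shift. Everything else is a direct application of the assumed PET in dimension $m+1$.
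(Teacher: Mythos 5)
Your proof is correct and follows essentially the same route as the paper: the paper's one-line proof likewise applies Definition \ref{admis} (the assumed PET for $\{T_n^l\}$ in $\R^{m+1}$) to the lifted field $Y(t,s)\equiv X(t)$, using $\la(T_n^l)=a_n$ and the fibrewise identity $\frac1{\la(T_n^l)}\int_{T_n^l}Y(t,s)\,\la(dt\,ds)=\int_{S_n}X(t)\va_n(t)\,dt$. Your extra verifications --- that the lift is a genuine homogeneous field generated by the measure preserving group $\widetilde\gamma_{(s,r)}=\gamma_s$ and that $\mathcal{I}_{\widetilde X}=\mathcal{I}_X$, so the conditional expectations on the right-hand sides agree --- are exactly the details the paper leaves implicit.
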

\begin{proof} Apply Definition \ref{admis} to the random field $Y(t,s)\equiv X(t), t\in\R^m,s\in\R,$ (note that $\lambda(T_n^l)=a_n$, hence $\frac1{\la(T_n^l)}\int_{T_n^l}Y(t,s)\la(dtds)=\int_{S_n}X(t)\va_n(t)dt$; here $\la$ is the Lebesgue measure on $\R^{m+1})$.
\end{proof}
The following two examples are implied  by Lemma \ref{dens} and Examples \ref{convex} and \ref{homot} (see also   Corollaries  4.2 and 4.3  in Ch. 6 in \cite{T}).

\begin{example}Let $\{\va_n\}$ be a
	sequence of  densities on
	$\R^m$ which are  concave on their supports $S_n$ (the sets $S_n$ are compact and convex).  If  the
	sequence $\{S_n\} $ is increasing and   the sets $S_n$ contain balls of radii $r_n\to\infty$, then the
	sequence  $\{\va_n\}$ is pointwise averaging; see \S 5.3  in \cite{TS}.
\end{example}

\begin{example}
	Let $\{c_n\}$ be a sequence of positive numbers tending to $\infty$.
	If  $\va$ is a bounded density on $\R^m$ with a  compact support $S$ containing $0$, then  the sequence of \emph{rescaled} densities $\va_n(x)=c_n^{-m}\va(c_n^{-1}x)$ is pointwise averaging.% (see Proposition 5.2 in \cite{TS}).  %the support of $\va_n$ is $T_n^l=a_nS:=\{t:t=a_ns,s\in S\}$.
\end{example}
\begin{remark}
	When   $\va(t)>0$ for all $t\in \R^m$, some more conditions on $\va$ are needed, but the class of "good" rescaled densities is still rather wide (see  Proposition 5.3 in \cite{TS}); for example, if $\va$ is the density of a nondegenerate symmetric  normal distribution, then the sequence of rescaled densities is pointwise averaging.
\end{remark}
For more examples, related to averages by convolutions of probability measures, see the next section.

It is easy to see that \emph{ Theorems \ref{CLT1} - \ref{CLT5} hold  for all pointwise averaging sequences} $\{q_n^l\}$ (of course, in the proofs  $\int_{T_n^l}f(X(t))q_n^l(dt)$ has to be considered  instead of
$\frac1{\la(T_n^l)}\int_{T_n^l}f(X(t))\la(dt)$).

\section{Randomized CLT on general groups} In the above study the simplest  groups  $\R^m$ and $\Z^m$ were considered. But the results are  valid  for all groups $T$ on which the  PET    holds  with some   sequence of sets $\{T_n^l\}$ or  "weights" $\{q_n^l\}$. Then the  proof of the generalized  versions of Theorems \ref{CLT2} and \ref{CLT} may be repeated almost word for word.   There is a rather rich literature related to PETs on groups; see \cite{Li}, \cite{N}, \cite{T}, \cite{TS} and the bibliographical survey therein; the  PET with exponential rates of convergence on semisimple Lie groups is presented in \cite{MNS}. In locally compact topological groups it is natural to consider pointwise averaging sequences of sets or densities with respect to the Haar measure. If the   group is not locally compact,  there is no Haar measure on it, and one has to consider pointwise averaging with sequences of probability Borel measures $\{q_n\}$ as  "weights" (see \S \ref{measures}).
If  $T$ is a second  countable topological group    and  the smallest closed group containing the support of a probability Borel measure  $p$ is $T$, then the sequence $\{q_n:=\frac1n\sum_{k=1}^np^{*k}\} $ is pointwise averaging on $T$   (see Corollary 5.3 in Ch. 3, Proposition 1.1  in  Ch. 5  and Theorem 6.1 in Chapter 6 in \cite{T}).
\emph{ In all these cases the analogs of our CLTs for homogeneous random fields with $E[(X(0)^2]<\infty$ are valid.}

 If, in addition, $p$ is  symmetric (i.e. $p(A^{-1})=p(A)$ for each Borel set $A$ in $T$) we may put $q_n:=p^{*n}$ (see Note 6.4 in Chapter 6 in \cite{T} and the references therein); the PET with these weights is valid for a homogeneous field $X$, if $E_X[|X(0)|^{1+\delta}]<\infty \ \text{for some}\  \delta>0$; note that condition $$E_X[|X(0)|^{2+\delta}]<\infty \ \text{for some}\  \delta>0$$ guarantees that the PET holds also for the random field   $(X(t))^2$; this, in its turn, implies  the analog  of Lemma \ref{lindeberg} and,  hence,  the analogs of the rest results. Therefore, under this condition, the CLTs   hold with the   "weights" $p^{*n}$, too.
\vskip1cm

\end{appendix}

\end{document}